\numberwithin{equation}{section}
\newtheorem{theorem}{Theorem}[section]
\newtheorem{lemma}{Lemma}[section]
\newtheorem{proposition}{Proposition}[section]
\newtheorem{corollary}{Corollary}[section]
\newtheorem{remark}{Remark}[section]
\numberwithin{equation}{section}
\begin{document}

\title{\textbf{Observable set, observability, interpolation inequality and spectral inequality for the heat equation in  $\mathbb{R}^n$ }}

\bigskip

\author{Gengsheng Wang\thanks{Center for Applied Mathematics, Tianjin University,
Tianjin, 300072, China. \emph{Email: wanggs62@yeah.net}}\quad
Ming Wang\thanks{School of Mathematics and Physics, China University of Geosciences, Wuhan, 430074,  China. \emph{Email: mwangcug@outlook.com}}\quad
Can Zhang\thanks{School of Mathematics and Statistics, Wuhan University, Wuhan, 430072, China,
and Department of Mathematics, University of the Basque Country (UPV/EHU),  Bilbao, 48080, Spain. \emph{Email: zhangcansx@163.com}}\quad
Yubiao Zhang\thanks{Center for Applied Mathematics, Tianjin University,
Tianjin, 300072, China. \emph{Email: yubiao\b{ }zhang@yeah.net}}
}

\maketitle

\begin{abstract}
This paper studies connections among observable sets, the observability inequality,
 the H\"{o}lder-type interpolation inequality and  the  spectral inequality
    for the heat equation in $\mathbb R^n$.  We
present  the characteristic  of observable sets for the  heat equation. In more detail, we show that    a measurable set in $\mathbb{R}^n$ satisfies the observability inequality  if and only if it
is $\gamma$-thick at scale $L$ for some $\gamma>0$ and $L>0$.
  We also build   up  the equivalence among the above-mentioned three inequalities.
   More precisely, we obtain that if a measurable set in $\mathbb{R}^n$ satisfies
   one of these  inequalities, then it satisfies others.  Finally, we get some weak  observability inequalities and                                                                                                                                 weak interpolation inequalities where  observations are made over a ball.
\end{abstract}

\medskip

\textbf{Keywords:}  Characteristic of observable sets, observability inequality, H\"{o}lder-type interpolation inequality, spectral     inequality,    heat equation \\

\textbf{AMS subject classifications:} 49J20, 49K20, 93D20

\tableofcontents
\section{Introduction}
In this paper, we  consider  the heat equation in the whole physical space $\mathbb{R}^n$:
\begin{align}\label{heat}
\partial_tu -\triangle u=0\;\;\;\;\text{in}\;\;(0,\infty)\times\mathbb{R}^n, \quad u(0,\cdot)\in L^2(\mathbb{R}^n).
\end{align}
For this equation, we will characterize the observable sets  and build up
connections among several important inequalities which are introduced in the next  subsection.

\paragraph{Notation}  Write $C(\cdots)$ for a positive constant
 that depends on what are included in the brackets and may vary in different contexts.
 The  same  can be said about
  $C'(\cdots)$, $C_1(\cdots)$  and so on.  Use $V_n$ to denote  the volume of the unit ball in $\mathbb{R}^n$. Let $B_r(x)$, with  $x\in \mathbb{R}^n$ and $r>0$,
   be the open ball in $\mathbb{R}^n$, centered at $x$   and of radius $r$. (Simply write $B_r=B_r(0)$.) Let $\mathbb{S}^{n-1}$ be the unit spherical surface in $\mathbb{R}^n$.
   Let $\mathbb{N}:=\{0,1,2,\dots\}$.
   Denote by $Q$ the open unit  cube in $\mathbb{R}^n$, centered at the origin. Let $x+LQ$, with $x\in \mathbb{R}^n$ and $L>0$, be the set $\{x+Ly: y\in Q\}$.
   For each measurable set $D\subset\mathbb R^n$, denote by $|D|$ 
   and  $D^c$    
   its Lebesgue measure and  complement set respectively.
   For any set $G$, we write $\chi_G$ for the characteristic function of $G$.
   Given $f\in L^2(\mathbb{R}^n)$, write $\widehat{f}$ for its Fourier transform\footnote{
   Given $f$ in the Schwartz class $\mathcal{S}(\mathbb R^n)$, its  Fourier transform is as:
$\widehat{f}(\xi) = (2\pi)^{-n/2}\int_{\mathbb{R}^n}e^{-ix\xi}f(x)\,\mathrm dx$, $\xi\in\mathbb R^n$.  Since $\mathcal{S}(\mathbb R^n)$ is dense in $L^2(\mathbb{R}^n)$,
by a standard way, we can define $\widehat{f}$ for each $f\in L^2(\mathbb{R}^n)$.}.
Given a measurable function $f$ over $\mathbb{R}^n$, we denote by $supp\; f$ 
the support of $f$, which is the set of all points (in $\mathbb{R}^n$) where $f$ does not vanish.
Let $\{e^{t\triangle} : t\geq 0\}$ be the semigroup
generated by the Laplacian operator in $\mathbb R^n$.
Given $x=(x_1,\dots,x_n)\in \mathbb{R}^n$, let $|x|:=\left(\sum_{i=1}^nx_i^2\right)^{1/2}$ and $\langle x\rangle:= \sqrt{1+|x|^2}$.

\subsection{Thick sets and several  inequalities}

We start with introducing  sets of $\gamma$-thickness at scale $L$.

\paragraph{Sets of $\gamma$-thickness at scale $L$}

 {\it A measurable set $E\subset\mathbb{R}^n$ is
said to be $\gamma$-thick at scale $L$ for some $\gamma>0$ and $L>0$, if
\begin{align}\label{equ-set}
\left|E \bigcap (x+LQ)\right|\geq \gamma L^n\;\;\mbox{for each}\;\;x\in \mathbb{R}^n.
\end{align}}
 About  sets of $\gamma$-thickness at scale $L$, several remarks are given in order.
\begin{description}
\item[($a_1$)] To our best knowledge, this definition arose from studies of the uncertainty principle.   We quote it from \cite{BD} (see Page 5 in \cite{BD}). Before \cite{BD},
    some very similar concepts were proposed. For instance, the definition of {\it relative dense sets} was given in \cite{Ka}  (see also Page 113 in \cite{HJ}); the definition of {\it thick sets} was introduced in
    \cite{K}.

    \item[($a_2$)] Each set $E$ of $\gamma$-thickness at scale $L$ has the following properties: First,  in each cube with the length $L$, $|E|$   is bigger than or equals to  $\gamma L^n$.
  Second, $E$
 is  also a set of $\gamma$-thickness at scale $2L$, but the reverse is not true. Third,
 we  necessarily have that $\gamma\leq 1$.
 \end{description}

\vskip 10pt

Next, we introduce an observability inequality for the equation (\ref{heat}).
\paragraph{The observability inequality}
{\it A measurable set $E\subset\mathbb{R}^n$ is said to  satisfy the observability
inequality
 for the equation (\ref{heat}), if for any $T>0$ there exists a positive constant $C_{obs}=C_{obs}(n,T,E)$ so that when $u$ solves (\ref{heat}),
\begin{align}\label{ob-q}
\int_{\mathbb{R}^n}|u(T,x)|^2\,\mathrm dx\leq C_{obs}\int_0^T\int_E|u(t,x)|^2\,\mathrm dx\,\mathrm dt.
\end{align}
When a measurable $E\subset\mathbb{R}^n$ satisfies (\ref{ob-q}), it is called an observable set for (\ref{heat}). }

\vskip 5pt

 Several notes on the observability
inequality \eqref{ob-q} are given in order.

\begin{description}
\item[($b_1$)] By  treating the integral on  the left hand side as a recovering term, and the integral on the right hand side as an observation term, we can understand the inequality \eqref{ob-q} as follows:  one can recover a solution of \eqref{heat}
at time $T$, through observing it on the set $E$ and in the time interval $(0,T)$.
From perspective of  control theory, the inequality \eqref{ob-q} is equivalent to
 the following null controllability: {\it
 For any $u_0 \in L^2(\mathbb{R}^n)$ and $T>0$, there exists a control $f\in L^2((0,T)\times \mathbb R^n)$ driving  the solution $u$ to the controlled equation:
 $\partial_tu -\triangle u=\chi_{E}f$ in $(0,T)\times\mathbb R^n$, from the initial state $u_0$ to the state $0$ at time $T$. }

\item[($b_2$)] We can compare \eqref{ob-q} with the observability inequality for the heat equation on a bounded physical domain.
Let $\Omega$
  be a bounded $C^2$ (or Lipschitz and locally star-shaped, see \cite{AEWZ}) domain in $\mathbb{R}^n$. Consider the equation:
\begin{align}\label{1.3WSGENG}
\begin{cases}
\partial_tu -\triangle u=0\;\;\;\;\;\;\;\;\quad \text{in}\;\;(0,\infty)\times\Omega,\\
 u=0 \qquad \qquad\quad\;\;\;\;\;\; \text{on}\;\;(0,\infty)\times\partial\Omega,\\
 u(0,\cdot)\in L^2(\Omega).
\end{cases}
\end{align}
{\it We say that a measurable set $\omega\subset\Omega$
satisfies the observability inequality for (\ref{1.3WSGENG}), if given
$T>0$, there is a constant $C(n, T, \omega,\Omega)$ so that when $u$ solves (\ref{1.3WSGENG}),
\begin{align}\label{ob-heat-open}
\int_\Omega |u(T,x)|^2 \,\mathrm dx\leq C(n, T, \omega,\Omega)\int_0^T\int_\omega |u(t,x)|^2 \,\mathrm dx \,\mathrm dt.
\end{align}
When a measurable set $\omega\subset\Omega$ satisfies (\ref{ob-heat-open}), it is called
an observable set for (\ref{1.3WSGENG}).
 }

The inequality (\ref{ob-heat-open}) has been widely studied.  See \cite{FI, LR, LZZ}
for the case where $\omega$ is open; \cite{AE, AEWZ, EMZ} for the case when $\omega$ is measurable.

\item[($b_3$)] When $\Omega$ is an unbounded domain and $\omega$ is a bounded and open subset of $\Omega$, the inequality  (\ref{ob-heat-open}) may not be true.
    This was showed in \cite{MZ} for the heat equation in the physical domain $\mathbb{R}^+$.
      Similar results have been obtained for higher dimension cases in \cite{MZb}.
 For the heat equation  in an unbounded domain,
  \cite{M05a} imposed
  a condition, in terms of the Gaussian kernel, on  the  set $\omega$  so that the observability inequality does not hold. In particular,  \cite{M05a} showed that the observability inequality  fails when $\Omega$ is unbounded and $|\omega|<\infty$. Notice
  that   any set $E\subset \mathbb{R}^n$ of finite measure does not have the characteristic on observable
  sets of (\ref{heat}). {\it This characteristic is indeed  the $\gamma$-thickness at scale $L$ for some $\gamma>0$ and $L>0$.} (See Theorem~\ref{equi-thm} of this paper.)

  About works on sufficient conditions of observable sets  for heat equations in  unbounded domains, we would like to mention  the work \cite{CMZ}. It showed that, for some
 parabolic equations in an  unbounded domain $\Omega \subset \mathbb{R}^n$,
  the observability inequality holds when  observations are made over a subset $E\subset\Omega$, with $\Omega \backslash E$ bounded. For other similar results, we refer the reader to \cite{M05b}. When $\Omega=\mathbb{R}^n$, such a set $E$ has the characteristic on observable sets of (\ref{heat}) mentioned before.

  \item[($b_4$)] An interesting phenomenon is that some potentials (growing at infinity) in heat equations may change the above-mentioned characteristic
  on observable sets for the heat equations with potentials.
   In \cite{M09, DM}, the authors realized the following fact: Let $A=\triangle+V$, where $V(x):=-|x|^{2k}$, $x\in \mathbb{R}^{n}$, with $2\leq k\in \mathbb{N}$.  Write $\left\{e^{tA}\right\}_{t\geq0}$ for the semigroup generated by the operator $A$.
Let $r_0\geq 0$ and let $\Theta_0$ be an open subset of $\mathbb{S}^{n-1}$. Let $\Gamma=\{x\in \mathbb{R}^{n}: |x|\geq r_0, x/|x|\in \Theta_0\}$. Then there is
 $C(n, T, \Theta_0,r_0, k)$ so that
      \begin{align}\label{ob-heat-potential}
      \int_{\mathbb{R}^{n}}\left|e^{TA}u_0\right|^2\,\mathrm dx\leq C(n, T, \Theta_0,r_0, k)\int_0^T\int_{\Gamma}\left|e^{tA}u_0\right|^2\,\mathrm dx\,\mathrm dt\;\; \mbox{for all}\;\; u_0\in L^2(\mathbb{R}^{n}).
      \end{align}
          The cone $\Gamma$ does not have the characteristic on  observable sets mentioned before,  but  still holds the observability inequality (\ref{ob-heat-potential}).    The main reason is as follows: The unbounded potential
 $V$ changes the behaviour of the solution of the pure heat equation (\ref{heat}).
 This
 plays an important role in the proof of \eqref{ob-heat-potential} (see \cite{M09, DM}). It should be pointed out that when $V(x)=-|x|^2$, $x\in \mathbb{R}^{n}$ (which means that the potential grows more slowly  at infinity),  \eqref{ob-heat-potential} does not hold for the above cone. We refer the readers to \cite{M09, DM} for more details on this issue.
 Besides, we also would like to mention \cite{B} for this subject.

   {\it An interesting question now arises: How do potentials influence characteristics of observable sets?} We wish to answer this question in our future studies.

% Thus, an interesting question arises. Does conclusions in Theorem~\ref{equi-thm} holds for the equation $u_t-\Delta u+a(x)u=0$ in $(0,\infty)\times \mathbb{R}^n$ (where $a(\cdot)$ is smooth and bounded over $\mathbb{R}^n$)? The method which we used to prove Theorem~\ref{equi-thm}
 %does not work for this case.
  \end{description}
\vskip 10pt

We then introduce an interpolation  inequality for the equation (\ref{heat}).

\paragraph{The H\"older-type interpolation inequality}
{\it A measurable set  $E\subset\mathbb R^n$ is said to  satisfy the H\"older-type interpolation inequality for the heat equation  (\ref{heat}),  if for any  $\theta\in (0,1)$, there is $C_{Hold}=C_{Hold}(n,E,\theta)$ so that for each $T>0$ and each solution $u$ to the equation \eqref{heat},
\begin{align}\label{interpolation}
\int_{\mathbb{R}^n}|u(T,x)|^2\,\mathrm dx\leq e^{C_{Hold}(1+\frac{1}{T})}\Big( \int_{E}|u(T,x)|^2\,\mathrm dx\Big)^\theta\Big( \int_{\mathbb{R}^n}|u(0,x)|^2\,\mathrm dx\Big)^{1-\theta}.
\end{align}}
Several remarks on the H\"older-type interpolation inequality (\ref{interpolation}) are given in order.

\begin{description}
\item[$(c_1)$] The above H\"older-type interpolation inequality
is equivalent to what follows: There is $\theta=\theta(n,E)\in (0,1)$ and $C_{Hold}=C_{Hold}(n,E)$ so that \eqref{interpolation}
holds for all $T>0$ and solutions $u$ to \eqref{heat}. This can be verified by the
 similar way used in the proof of   \cite[Theorem 2.1]{PWX}.

\item[$(c_2)$] The inequality \eqref{interpolation} is a kind of quantitative unique continuation for the heat equation (\ref{heat}).
    It provides a H\"{o}lder-type propagation of smallness for solutions of the heat equation (\ref{heat}). In fact, if $ \displaystyle\int_{E}|u(T,x)|^2\,\mathrm dx=\delta$, then we derive from \eqref{interpolation} that $\displaystyle\int_{\mathbb{R}^n}|u(T,x)|^2\,\mathrm dx$ is bounded by $C\delta^\theta$ for some constant $C>0$. Consequently, $u(T,\cdot)=0$
    over $\mathbb{R}^n$ provided that it is zero over $E$.

    \item[$(c_3)$] From perspective of control theory, the inequality \eqref{interpolation} implies the approximate null controllability
        with cost for  impulse
        controlled heat equations, i.e.,  {\it given $T>\tau>0$, $\varepsilon>0$,
        there is $C=C(n,E,  T,\tau,\varepsilon)$ such that
        for each
        $u_0\in L^2(\mathbb{R}^n)$, there is $f\in  L^2(\mathbb{R}^n)$  so that
        $$
        \|f\|_{L^2(\mathbb{R}^n)}\leq C\|u_0\|_{L^2(\mathbb{R}^n)}\;\;\mbox{and}\;\;
        \|u(T,\cdot)\|_{L^2(\mathbb{R}^n)}\leq \varepsilon\|u_0\|_{L^2(\mathbb{R}^n)},
        $$
        where $u$ is the solution to
        the impulse controlled equation: $\partial_t u-\Delta u=\delta_{\{t=\tau\}}\chi_E f$ in $(0,T)\times \mathbb{R}^n$, with the initial condition $u(0,x)=u_0(x)$, $x\in  \mathbb{R}^n$.}        (See \cite[Theorem 3.1]{PWX}.)
        \item[$(c_4)$] The H\"older-type interpolation inequality (\ref{interpolation}) can imply the observability inequality \eqref{ob-q}.
            Moreover, it leads to the following stronger version of  \eqref{ob-q}:
\begin{align}\label{wangobs1.8}
\int_{\mathbb{R}^n}|u(T,x)|^2\,\mathrm dx\leq C_{obs}\int_F\int_E|u(t,x)|^2\,\mathrm dx\,\mathrm dt, \;\;\mbox{with}\;\;C_{obs}=C_{obs}(n,T,E,F),
\end{align}
where $F\subset (0,T)$ is a subset of positive measure. This will be presented in Lemma
\ref{lem-obs}.
We derive (\ref{wangobs1.8})
from (\ref{interpolation}), through using the telescoping series method developed in
\cite{PW} (see also \cite{PWZ, AEWZ})
for heat equations in bounded domains.

\item[$(c_5$)] We can compare  (\ref{interpolation}) with an interpolation inequality
for the heat equation \eqref{1.3WSGENG}.
 {\it A measurable set $\omega\subset\Omega$  is said to  satisfy the H\"older-type interpolation inequality
 for the equation (\ref{1.3WSGENG}),
 if for any $\theta\in(0,1)$, there is $C=C(n,  \Omega, \omega, \theta)$ so that for any $T>0$ and any solution $u$ to  \eqref{1.3WSGENG},
    \begin{align}\label{1.7GGWWSSG}
\int_{\Omega}|u(T,x)|^2\,\mathrm dx\leq e^{C(1+\frac{1}{T})}\Big( \int_{\omega}|u(T,x)|^2\,\mathrm dx\Big)^\theta\Big( \int_{\Omega}|u(0,x)|^2\,\mathrm dx\Big)^{1-\theta}.
\end{align}}
In \cite{PW10}, the authors proved that any open and nonempty subset $\omega\subset\Omega$ satisfies  the H\"older-type interpolation inequality (\ref{1.7GGWWSSG})
for heat equations with potentials in  bounded and convex domains. The frequency function method used in   \cite{PW10} was partially borrowed from \cite{EFV}.
In \cite{AEWZ}, the authors proved that any subset $\omega$ of positive measure satisfies the H\"older-type interpolation inequality (\ref{1.7GGWWSSG})
for the heat equation (\ref{1.3WSGENG}) where $\Omega$ is a bounded
Lipschitz and locally star-shaped domain in $\mathbb{R}^n$.
More about this inequality for heat equations in  bounded domains, we referee the readers to \cite{PW, PWX, PWZ}.

\end{description}
\vskip 10pt

Finally, we will introduce a spectral  inequality for some functions in $L^2(\mathbb{R}^n)$.

\paragraph{The spectral inequality}
{\it A measurable set  $E\subset\mathbb{R}^n$ is said to  satisfy the spectral inequality, if
 there is a positive constant $C_{spec}=C_{spec}(n,E)$ so that for each $N>0$,
\begin{align}\label{intro-spec}
\int_{\mathbb{R}^n}|f(x)|^2 \,\mathrm dx \leq e^{C_{spec}(1+N)}\int_E |f(x)|^2 \,\mathrm dx
\;\;\mbox{for all}\;\;f\in L^2(\mathbb{R}^n)\;\;\mbox{with}\;\;supp \;\widehat{f}\subset B_N.
\end{align}}

\vskip 5pt

Several notes on the spectral inequality (\ref{intro-spec}) are given in order.
\begin{description}
\item[$(d_1)$]  Recall the Lebeau-Robbiano spectral inequality (see \cite{LR, LZUA}):
    {\it Let $\Omega$ be a bounded smooth  domain in $\mathbb{R}^n$ and let $\omega$ be a nonempty open subset of $\Omega$.
      Write $\triangle_\Omega$ for the Laplacian operator on $L^2(\Omega)$ with $Domain(\triangle_\Omega)=H^1_0(\Omega)\bigcap H^2(\Omega)$.
      Let $\{\lambda_j\}_{j\geq1}$ (with $0<\lambda_1<\lambda_2\leq\cdots$) be the eigenvalues
of $-\triangle_\Omega$ and let $\{\phi_j\}_{j\geq1}$ be the corresponding eigenfunctions.
      Then there is a positive constant $C(n,\Omega,\omega)$ so that
      for each $\lambda>0$,
\begin{align}\label{spec-bound}
\int_{\Omega}|f(x)|^2 \,\mathrm dx \leq e^{C(n,\Omega,\omega)(1+\sqrt{\lambda})}\int_{\omega}|f(x)|^2 \,\mathrm dx \;\;\mbox{for all}\;\; f\in span\{\phi_j\;:\; \lambda_j<\lambda\}.
\end{align}}
This inequality was extended to the case where $\Omega$ is a bounded $C^2$ domain via
a simpler way
in \cite{LVQ1}.
Then it  was extended to the case that $\Omega$ is  a bounded Lipschitz and locally star-shaped domain; $\omega$ is a subset of positive measure so that $\omega\subset B_R(x_0)\subset
B_{4R}(x_0)\subset\Omega$ for some $R>0$ and $x_0\in \Omega$;
  and $C(n,\Omega,\omega)=C(n,\Omega, |\omega|/|B_R|)$  (see \cite[Theorem 5 and Theorem 3]
   {AEWZ}).

By our understanding, the inequality (\ref{intro-spec}) is comparable to (\ref{spec-bound})
 from two perspectives as follows:
 First,  the inequality (\ref{intro-spec})
 is satisfied by functions in the subspace:
 $$
 E_N\triangleq \Big\{f\in L^2(\mathbb{R}^n)\; :\; supp \; \widehat{f}\subset B_N\Big\}
 \;\;\mbox{ with}\;\;N>0,
  $$
  while the inequality (\ref{spec-bound}) is satisfied by functions in the subspace:
 $$
 F_\lambda\triangleq\Big\{\sum_{\lambda_j<\lambda}f=a_j\phi_j\in L^2(\Omega)\;:\; \{a_j\}_{j\geq 1}\subset   \mathbb{R}\Big\}\;\;\mbox{ with}\;\;\lambda>0.
  $$
  From the definition of the spectral projection in the abstract setting given in \cite{SIMON1}
  (see Pages 262-263 in \cite{SIMON1}), we can define  two spectral projections: $\chi_{[0,N^2)}(-\Delta)$
  and $\chi_{[0,\lambda)}(-\Delta_\Omega)$  on $L^2(\mathbb{R}^n)$ and $L^2(\Omega)$,
  respectively.
  Then after some computations, we find that $E_N$ and $F_{\lambda}$ are  the ranges of  $\chi_{[0,N^2)}(-\Delta)$ and  $\chi_{[0,\lambda)}(-\Delta_\Omega)$, respectively.
 Second, the square root of the integral of $\chi_{[0,N^2)}$ over $\mathbb{R}$ is $N$
 which corresponds to the $N$ in (\ref{intro-spec}), while the square root of the integral of $\chi_{[0,\lambda)}$ over $\mathbb{R}$ is $\sqrt{\lambda}$ which corresponds to
 the $\sqrt{\lambda}$ in (\ref{spec-bound}).

\item[$(d_2)$]
 Though the inequality (\ref{intro-spec}) was first named as the spectral inequality in \cite{RM} (to our best knowledge), it has been extensively studied for long time. (See, for instance, \cite{BD, HJ, Ka, K, N, LS,  BPP1, BPP2,   Re}.) In \cite{K}, the author announced that if $E$ is $\gamma$-thick at scale $L$ for some $\gamma>0$ and $L>0$, then $E$ satisfies the spectral inequality (\ref{intro-spec}),
  and further proved this announcement for the case when $n=1$. Earlier,
  the authors of  \cite{LS} (see also \cite{HJ}) proved that $E$ is $\gamma$-thick at scale $L$
 for some $\gamma>0$ and $L>0$ if and only if  $E$ satisfies the following inequality: For each $N>0$, there is a positive constant $C(n,E,N)$ so that
\begin{align}\label{spec-LS}
\int_{\mathbb{R}^n}|f(x)|^2 \,\mathrm dx \leq C(n,E,N)\int_E |f(x)|^2 \,\mathrm dx
\;\;\mbox{for each}\;\;f\in L^2(\mathbb{R}^n)\;\;\mbox{with}\;\;supp \; \widehat{f}\subset B_N.
\end{align}
This result  is often referred as the Logvinenko-Sereda theorem.
Before \cite{LS}, the above equivalence was proved  by B. P. Paneyah for the case
that $n=1$ (see \cite{BPP2, BPP1, HJ}).
In \cite{Ka}, the author claimed (\ref{spec-LS}), with $C(n,E,N)=e^{C_{spec}(1+N)}$,
and proved this claim for the case when $n=1$.
  In the proof of our main theorem of this paper, the expression $C(n,E,N)=e^{C_{spec}(1+N)}$ will play an important role.
From this point of view,  (\ref{spec-LS}) is weaker than the spectral inequality  (\ref{intro-spec}).

\item[$(d_3)$] The inequality (\ref{spec-LS}) is also important. It is closely related  to the uncertainty principle (which is an extensive research topic in the theory of harmonic analysis and says roughly that a nonzero function and its Fourier transform cannot be both sharply localized, see \cite{FS}). In fact,
    a measurable set $E$  satisfies
    the inequality \eqref{spec-LS} if and only if it satisfies the following uncertainty principle:
\begin{align*}%\label{uncertain}
\int_{\mathbb{R}^n}|f(x)|^2 \,\mathrm dx \leq C'(n,E,N)\left(\int_E |f(x)|^2 \,\mathrm dx + \int_{B_N^c}|\widehat{f}(\xi)|^2 \,\mathrm d\xi \right)\;\;\mbox{for all}\;\; f \in L^2(\mathbb{R}^n).
\end{align*}
We refer the interested readers to \cite{BD, HJ, P.Jaming, N} for the proof of the above result,
as well as  more general uncertainty principle, where  $E$ and $B_N^c$ are replaced by more general sets.

It deserves mentioning what follows: The uncertainty principle
 can help us to get the exact controllability for the  Schr\"{o}dinger equation with controls located outside of two balls and at two time points. This was realized in  \cite{WWZ}. (See \cite{HSF} for more general cases.)

\item[$(d_4)$] By using a global Carleman estimate, the authors in  \cite{RM} proved
the  spectral inequality  \eqref{intro-spec} for such an open subset $E$ that satisfies the property: there exists $\delta>0$ and $r>0$ so that
\begin{align}\label{thick-open}
\forall y \in \mathbb{R}^n, \exists \,y'\in E\textmd{ such that } B_r(y') \subset E\textmd{ and }|y-y'|\leq \delta.
\end{align}
It is clear that a set with the above property (\ref{thick-open}) is   a set of $\gamma$-thick at scale $L$ for some $\gamma>0$ and $L>0$\footnote{In fact, one can choose $L=2(\delta+r)$, $\gamma = r^n\left(2(\delta+r)\right)^{-n}V_n$.}.

\item[$(d_5)$] With the aid of the spectral inequality (\ref{intro-spec}), one can use 
the same strategy given in \cite{LR} to derive the null controllability described in the note $(b_1)$. 

\end{description}

\subsection{Aim, motivation and main result}
{\bf Aim}  According to the note $(d_2)$  in the previous subsection, the characteristic of a measurable set holding
the spectral inequality \eqref{intro-spec} is the $\gamma$-thickness at scale $L$
 for some $\gamma>0$ and $L>0$. Natural and interesting questions are as follows:
 What is the characteristic of  observable sets for (\ref{heat})?  How to characterize  a measurable set $E$ satisfying the H\"older-type interpolation inequality \eqref{interpolation}? What are the connections among inequalities (\ref{ob-q}), \eqref{interpolation} and \eqref{intro-spec}?
The  aim of this paper is to answer the above questions.
\vskip 5pt

\noindent{\bf Motivation} The motivations of our studies are given in order.
\begin{description}
\item[$(i)$]
 The first  motivation  arises from two papers \cite{BLR} and \cite{AE}. In \cite{BLR},
 the authors gave, for the wave equation in a bounded physical domain $\Omega\subset\mathbb R^n$,
 a sufficient and almost necessary condition
to ensure an open subset $\Gamma\subset\partial\Omega$ to be observable, (i.e., $\Gamma$ satisfies the observability inequality for the wave equation with  observations on $\Gamma$).
This condition is exactly the well known Geometric Control Condition (GCC for short)\footnote{An open subset $\omega \subset \Omega$ is said to satisfy the GCC if there exists $T_0>0$ such that any geodesic with velocity one  meets $\omega$ within time $T_0$ (see e.g. \cite{Lau}).}. Thus, we can say that the GCC condition is a characteristic of observable open sets on $\partial\Omega$, though  this condition is not strictly necessary (see \cite{RLT}).
   The authors in \cite{AE} presented a sufficient and necessary condition to ensure a measurable subset $\omega\subset\Omega$ satisfying (\ref{ob-heat-open}).
   This condition is as: $|\omega|>0$.
 Hence,   the characteristic of  observable  sets for the equation (\ref{1.3WSGENG})
 is as:
$|\omega|>0$.

 Analogically,  it should be very important to characterize observable sets for
 the heat equation (\ref{heat}). However,  it seems for us  that there is no any
 such result in the past publications. These motivate us to find the characteristic
 of observable sets for the equation (\ref{heat}).

\item[$(ii)$]  For the heat equation (\ref{1.3WSGENG}), the observability inequality (\ref{ob-heat-open}), the H\"older-type interpolation inequality (\ref{1.7GGWWSSG}) and the spectral inequality (\ref{spec-bound})
    are equivalent. More precisely, we have that if $\omega\subset\Omega$ is a measurable set, then
    \begin{eqnarray}\label{1.13GGWWSS}
    |\omega|>0\Longleftrightarrow \omega\;\mbox{satisfies}\;(\ref{spec-bound})\Longleftrightarrow
    \omega\;\mbox{satisfies}\;(\ref{1.7GGWWSSG})\Longleftrightarrow
    \omega\;\mbox{satisfies}\;(\ref{ob-heat-open}).
    \end{eqnarray}
The proof of (\ref{1.13GGWWSS}) was hidden in the paper \cite{AEWZ}. (See Theorem 5, Theorem 6, as well as its proof, Theorem 1, as well as its proof,  in \cite{AEWZ}.)
However,  for the heat equation (\ref{heat}), the equivalence among these three inequalities
has not been touched upon.
These  motivate
 us to build up the equivalence among inequalities (\ref{ob-q}), \eqref{interpolation} and \eqref{intro-spec}.

It deserves mentioning that for heat equations with lower terms in bounded physical domains,  we do not know if  (\ref{1.13GGWWSS}) is  true.
\end{description}

\noindent {\bf Main Result} The main result of the paper is the next Theorem~\ref{equi-thm}.
\begin{theorem}\label{equi-thm}
Let $E\subset \mathbb{R}^n$ be a measurable subset. Then the following statements are equivalent:
\begin{description}
  \item[(i)] The set $E$ is $\gamma$-thick at scale $L$ for some $\gamma>0$ and $L>0$.
  \item[(ii)] The set $E$ satisfies the spectral inequality (\ref{intro-spec}).
    \item[(iii)] The set $E$ satisfies the H\"older-type interpolation inequality \eqref{interpolation}.
  \item[(iv)] The set $E$ satisfies  the observability inequality (\ref{ob-q}).
\end{description}
\end{theorem}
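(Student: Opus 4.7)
The plan is to prove the cyclic chain (i) $\Rightarrow$ (ii) $\Rightarrow$ (iii) $\Rightarrow$ (iv) $\Rightarrow$ (i). The first three implications adapt and refine techniques from the bounded-domain literature to the setting of $\mathbb{R}^n$, requiring precise quantitative control on how the constants depend on the frequency cut-off and the time horizon, while the closing implication (iv) $\Rightarrow$ (i) exploits the translation invariance of the heat semigroup on $\mathbb{R}^n$, which is precisely the structural feature making $\gamma$-thickness the \emph{necessary} geometric condition.

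For (i) $\Rightarrow$ (ii) I would follow the Kovrijkine strategy: given a band-limited $f\in L^2(\mathbb{R}^n)$ with $\operatorname{supp}\widehat f\subset B_N$, tile $\mathbb{R}^n$ by axis-aligned cubes of side comparable to $L$, classify each cube $Q'$ as \emph{good} or \emph{bad} via a Chebyshev test against $\|f\|_{L^2(\mathbb{R}^n)}^2$, and on each good cube combine the analyticity of band-limited functions (through Bernstein-type derivative bounds depending on $N$) with a multivariate Remez-type estimate to deduce $\int_{Q'}|f|^2\le (C/\gamma)^{c(1+LN)}\int_{E\cap Q'}|f|^2$. Summing over good cubes and absorbing the bad ones via the threshold yields the spectral inequality with the prescribed profile $e^{C_{spec}(1+N)}$. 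For (ii) $\Rightarrow$ (iii) I split $u(T) = P_N u(T) + (I - P_N) u(T)$ in frequency, apply (ii) to the band-limited piece, and use the parabolic decay $\|(I-P_N)u(T)\|_{L^2}\le e^{-TN^2}\|u_0\|_{L^2}$ for the high-frequency tail; balancing $e^{C_{spec}(1+N)}\|u(T)\|_{L^2(E)}^2$ against $e^{-2TN^2}\|u_0\|_{L^2}^2$ and optimizing in $N$ produces the H\"older-type interpolation inequality with the prefactor $e^{C_{Hold}(1+1/T)}$.

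For (iii) $\Rightarrow$ (iv) I would use the telescoping series method of Phung--Wang. Pick a time sequence $T_k = T(1-2^{-k})$ (so $T_k\uparrow T$) and apply (iii) on each slab $[T_k,T_{k+1}]$ with $u(T_k,\cdot)$ as the initial datum, which yields $\|u(T_{k+1})\|_{L^2}^2 \le e^{C(1+2^{k+1}/T)}\|u(T_{k+1})\|_{L^2(E)}^{2\theta}\|u(T_k)\|_{L^2}^{2(1-\theta)}$. Combining this with the monotonicity of $t\mapsto \|u(t)\|_{L^2}^2$ to pass to a time-integrated form, together with Young's inequality $a^{\theta} b^{1-\theta} \le \epsilon_k a + C(\epsilon_k,\theta) b$ with $\epsilon_k$ chosen so that $C(\epsilon_k,\theta) e^{C(1+2^{k+1}/T)}$ is summable, allows one to absorb the $\|u(T_k)\|_{L^2}^2$ terms by iteration; this yields the stronger observability inequality \eqref{wangobs1.8} for any $F\subset(0,T)$ of positive measure, which in particular implies \eqref{ob-q}.

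The implication (iv) $\Rightarrow$ (i) is the novel necessity direction and I argue by contrapositive. Suppose $E$ is not $\gamma$-thick at scale $L$ for any $\gamma,L>0$. Fix the Gaussian initial datum $u_0(x)=e^{-|x|^2/2}$, whose explicit heat flow $u(t,x)=(1+2t)^{-n/2}e^{-|x|^2/(2(1+2t))}$ satisfies $\|u\|_{L^\infty([0,T]\times\mathbb{R}^n)}\le 1$ and $\|u(T,\cdot)\|_{L^2(\mathbb{R}^n)}=c_T>0$. For each $y\in\mathbb{R}^n$, translation invariance gives that $u_0(\cdot-y)$ generates the solution $u(t,x-y)$, again with time-$T$ norm $c_T$. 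Given $\varepsilon>0$, first choose $R$ so that $\int_{|x|>R}|u(t,x)|^2\,dx \le \varepsilon$ uniformly in $t\in[0,T]$ (by direct Gaussian computation), then pick $L$ with $B_R\subset LQ$, and finally use the failure of thickness to choose $y$ with $|E\cap(y+LQ)|\le\varepsilon$; splitting $\int_0^T\int_{E-y}|u(t,x)|^2\,dx\,dt$ according to whether $x\in B_R$ or $x\in B_R^c$ gives a bound of the form $T(\|u\|_\infty^2\,\varepsilon+\varepsilon)$, which is arbitrarily small while the left-hand side of \eqref{ob-q} stays equal to $c_T^2>0$, contradicting observability. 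The main obstacle I anticipate is in (i) $\Rightarrow$ (ii): obtaining the precise exponential $e^{C_{spec}(1+N)}$ in general dimension $n\ge 2$ requires a careful multivariate implementation of Kovrijkine's good/bad cube decomposition with sharp control on the Remez-type constant, and this exponent in $N$ is crucial downstream, since the optimization in (ii) $\Rightarrow$ (iii) uses it to produce the correct $e^{C_{Hold}(1+1/T)}$ prefactor that drives the telescoping in (iii) $\Rightarrow$ (iv).
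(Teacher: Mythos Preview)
Your proposal is correct and follows essentially the same cyclic route $(\mathrm{i})\Rightarrow(\mathrm{ii})\Rightarrow(\mathrm{iii})\Rightarrow(\mathrm{iv})\Rightarrow(\mathrm{i})$ as the paper, with the same key ingredients at each step: Kovrijkine's good/bad cube decomposition for the spectral inequality, frequency splitting plus optimization in $N$ for the interpolation inequality, the Phung--Wang telescoping series for observability, and translated Gaussian solutions to close the loop. The only cosmetic difference is that you frame $(\mathrm{iv})\Rightarrow(\mathrm{i})$ by contraposition whereas the paper argues directly (fixing $x_0$, applying observability to the Gaussian centered at $x_0$, and extracting a uniform lower bound on $|E\cap B_L(x_0)|$), but the underlying mechanism---that the $L^2$ mass of the Gaussian heat flow is localized near its center uniformly in the center---is identical.
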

Several remarks about Theorem~\ref{equi-thm} are given in order.
\begin{description}
 \item[$(e_1)$]
The equivalence of statements (i) and (iv) in Theorem~\ref{equi-thm} tells us:
 the characteristic
of observable sets for the heat equation (\ref{heat}) is
 the $\gamma$-thickness at scale $L$ for some $\gamma>0$ and $L>0$.
This seems to be new for us.
\item[$(e_2)$] The equivalence among statements (ii), (iii) and (iv) in Theorem~\ref{equi-thm}
presents  closed connections of the three inequalities. This seems also to be new for us.
\item[$(e_3)$] We find the following way to prove Theorem~\ref{equi-thm}:
${\bf (i) \Rightarrow (ii) \Rightarrow (iii) \Rightarrow (iv) \Rightarrow (i).} $
We prove ${\bf (i) \Rightarrow (ii)}$ by some ideas from \cite{K}. Indeed, this result
was announced in \cite{K} and then proved for the case that $n=1$ in the same reference. We prove ${\bf  (ii) \Rightarrow (iii) \Rightarrow (iv)}$, though using some ideas and techniques from  \cite{AEWZ, PW}. Finally, we show ${\bf (iv) \Rightarrow (i)} $ via the structure of a special solution to the equation (\ref{heat}).

\item[$(e_4)$] We noticed that four days after
we put our current work in arXiv, the paper \cite{EV17} appeared in arXiv. In \cite{EV17}, the authors independently got the equivalence (i) and (iv) in Theorem~\ref{equi-thm}.
 \end{description}

\subsection{Extensions to bounded observable sets}

From Theorem~\ref{equi-thm}, we see that in order to have
(\ref{ob-q}) or \eqref{interpolation}, the set $E$  has to be $\gamma$-thick at scale $L$ for some $\gamma>0$ and $L>0$. Then a natural and interesting question arises: What
are possible substitutions of (\ref{ob-q}) or \eqref{interpolation}, when $E$ is replaced by a  ball in $\mathbb{R}^n$? (It deserves to mention that any ball in $\mathbb{R}^n$ does not satisfy the thick condition \eqref{equ-set}.)
We try to find
the substitutes from two perspectives as follows:
 \begin{description}
 \item[(i)] We try to add weights on the left hand side
and
ask ourself if the following inequalities hold for all solutions of \eqref{heat}:
\begin{align}\label{ob-ball-1}
\int_{\mathbb{R}^n}\chi_{B_{r'}}(x)|u(T,x)|^2\,\mathrm dx\leq C(T,r',r,n)\int_0^T\int_{B_r}|u(t,x)|^2\,\mathrm dx\,\mathrm dt
\end{align}
and
\begin{align}\label{ob-ball-2}
\int_{\mathbb{R}^n}\rho(x)|u(T,x)|^2\,\mathrm dx\leq C(T,\rho,r,n)\int_0^T\int_{B_r}|u
(t,x)|^2\,\mathrm dx\,\mathrm dt,
\end{align}
where  $\rho(x)=\langle x\rangle^{-\nu}$ or $e^{-|x|}$.
On one hand,
 we proved that (\ref{ob-ball-1}) is true when  $r'<r$, while (\ref{ob-ball-1}) is not true when $r'>r$ (see Theorem \ref{prop-1} in Subsection \ref{07bao1}). Unfortunately, we do not know if (\ref{ob-ball-1}) holds when $r'=r$. On the other hand,
 we showed that (\ref{ob-ball-2}) fails for all  $r>0$ (see Corollary~\ref{corollary3.2} in Subsection \ref{07bao1}).

\item[(ii)] We try to find a class of initial data so that
(\ref{ob-q}) (where $E$ is replaced by $B_r$) holds for all solutions of (\ref{heat}) with initial data in this class. We have obtained some results on this issue (see Theorem \ref{obs-special-data} in Subsection \ref{07bao1}). {\it More interesting question is as: what is the biggest class of initial data so that
(\ref{ob-q}) (where $E$ is replaced by $B_r$) holds for all solutions of the heat equation (\ref{1.3WSGENG}) with initial data in this class? }
Unfortunately, we are not able to answer it.

We now turn to  possible substitutions of \eqref{interpolation} where $E$ is replaced by $B_1$. We expect to find $b(\varepsilon)>0$ for each $\varepsilon\in(0,1)$ so that for any $T>0$,
there is
 $C(n,T)>0$ such that when $u$ solves \eqref{heat},
\begin{align}\label{exten-2}
\int_{\mathbb{R}^n}|u(T,x)|^2 \,\mathrm dx \leq C(n,T)\left( \varepsilon\int_{\mathbb{R}^n}|u(0,x)|^2 \,\mathrm dx + b(\varepsilon) \int_{B_1}|u(T,x)|^2 \,\mathrm dx \right).
\end{align}
Let us  explain why  (\ref{exten-2}) deserves to be expected. {\it Reason One.} Let $\theta\in (0,1)$ and $T>0$. Then the next two inequalities are equivalent. The first inequality is as:
 there is $C(n, T,\theta)$ so that when $u$ solves \eqref{heat},
  \begin{align}\label{1.17GSwang}
\int_{\mathbb{R}^n}|u(T,x)|^2\,\mathrm dx\leq C(n, T,\theta)\Big( \int_{B_1}|u(T,x)|^2\,\mathrm dx\Big)^\theta\Big( \int_{\mathbb{R}^n}|u(0,x)|^2\,\mathrm dx\Big)^{1-\theta},
\end{align}
while the second inequality is as:  there is $C(n,T,\theta)>0$ so that for any
$\varepsilon\in(0,1)$ and any solution $u$ to (\ref{heat}),
\begin{align}\label{1.18GSwang}
\int_{\mathbb{R}^n}|u(T,x)|^2 \,\mathrm dx \leq C(n,T,\theta)\left( \varepsilon\int_{\mathbb{R}^n}|u(0,x)|^2 \,\mathrm dx + \varepsilon^{-\frac{1-\theta}{\theta}} \int_{B_1}|u(T,x)|^2 \,\mathrm dx \right).
\end{align}
 However, (\ref{1.18GSwang}) is not true, for otherwise, we can use the same method
 developed in \cite{PW} (see also \cite{AEWZ}) to derive (\ref{ob-q}) (where $E$ is replaced by $B_1$) which contradicts the equivalence of (i) and (iv) in  Theorem~\ref{equi-thm}. Thus, $b(\varepsilon)$ in (\ref{exten-2}) cannot grow like
 a polynomial of $\varepsilon$.
 But it seems not to be hopeless for us to find some kind of $b(\varepsilon)$ so that  (\ref{exten-2}) holds.
 {\it Reason Two.} The space-like strong unique continuation of the heat equation (\ref{heat}) (see \cite{EFV}) yields that if $u(T,\cdot)=0$ on the ball $B_1$, then $u(T,\cdot)= 0$ over $\mathbb R^n$. The inequality (\ref{exten-2}) is a quantitative version of the aforementioned  unique continuation.

Though we have not found any $b(\varepsilon)$ so that  (\ref{exten-2}) is true,
we obtained some $b(\varepsilon)$  so that
 (\ref{exten-2})  holds for all solutions to \eqref{heat} with
initial data having some slight decay (see Theorem \ref{app-obs} in Subsection \ref{07bao2}).

Finally, We would like to mention what follows: With the aid of an abstract lemma (i.e., Lemma 5.1 in \cite{WWZ}), each of extended inequalities mentioned above corresponds to a kind of controllability for the heat equation (\ref{heat}). We are not going to repeat the details on this issue in the current  paper.

 \end{description}

 \subsection{Plan of the paper}
 The paper is organized as follows: In Section 2, we prove Theorem~\ref{equi-thm}.
 In Section 3, we present several weak observability inequalities and weak interpolation inequalities, where observations are made in a ball of $\mathbb{R}^n$.

\section{Proof of Theorem \ref{equi-thm}}\label{dumain}

We are going to prove Theorem \ref{equi-thm} in the following way:
$$
{\bf (i) \Rightarrow (ii) \Rightarrow (iii) \Rightarrow (iv) \Rightarrow (i).}
 $$
The above steps are  based on  several lemmas: Lemmas \ref{lem-spec},
 \ref{lem-hold}, \ref{lem-obs} and \ref{lem-thick-E}.
 We begin with Lemma \ref{lem-spec} connecting  the spectral inequality with  sets of $\gamma$-thickness at scale $L$.

\begin{lemma}\label{lem-spec}
Suppose that  a measurable set $E\subset \mathbb{R}^n$ is $\gamma$-thick at scale $L$  for some  $\gamma>0$ and $L>0$. Then $E$ satisfies the spectral inequality (\ref{intro-spec}),
with
$$
C_{spec}(n,E) =C(1+L)\Big(1+\ln \frac{1}{\gamma}\Big)\;\;\mbox{for some}\;\;C=C(n).
$$
\end{lemma}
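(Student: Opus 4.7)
The plan is to follow Kovrijkine's strategy announced in \cite{K}, which in the $n=1$ case proceeds through a local Remez/Tur\'an-type inequality on cubes combined with a Bernstein-type bound for band-limited functions, and to carry out the $n$-dimensional version carefully so as to track the explicit dependence on $L$, $\gamma$ and $N$.

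First I would normalize the scale: the dilation $g(x)=f(Lx)$ turns a function with $\mathrm{supp}\,\widehat f\subset B_N$ into one with $\mathrm{supp}\,\widehat g\subset B_{NL}$, and the thickness condition on $E$ at scale $L$ becomes $\gamma$-thickness of $E/L$ at scale $1$. Hence it is enough to prove the spectral inequality in the case $L=1$, at the price of replacing $N$ by $NL$, and the target then becomes a bound of the form $\int_{\mathbb R^n}|g|^2\le e^{C(1+\ln(1/\gamma))(1+NL)}\int_{E/L}|g|^2$. Next I would partition $\mathbb R^n$ into the unit cubes $Q_k=k+Q$, $k\in\mathbb Z^n$, and split them according to the usual good/bad dichotomy: call $Q_k$ \emph{good} when for every multi-index $\alpha$ one has $\int_{Q_k}|\partial^\alpha g|^2\le (AN_*)^{2|\alpha|}\int_{Q_k}|g|^2$ with $N_*:=NL$, and bad otherwise, for an absolute constant $A$ to be fixed. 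Bernstein's inequality $\|\partial^\alpha g\|_{L^2(\mathbb R^n)}\le N_*^{|\alpha|}\|g\|_{L^2(\mathbb R^n)}$ (valid because $\widehat g$ is compactly supported in $B_{N_*}$) combined with a dyadic summation over $\alpha$ shows, via Chebyshev, that the total mass carried by bad cubes is at most $\frac12\|g\|_{L^2(\mathbb R^n)}^2$, so it suffices to control good cubes.

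The core of the proof is a local lemma on each good cube: approximate $g$ on $Q_k$ by a polynomial $P_k$ of degree $d$ (its Taylor polynomial at a suitably chosen point of $Q_k$, or an $L^2$-projection onto polynomials); the derivative bound defining ``good'' yields $\|g-P_k\|_{L^2(Q_k)}^2\le (AN_*)^{2d}/(d!)^2 \cdot\|g\|_{L^2(Q_k)}^2$, which is at most $\tfrac12\|g\|_{L^2(Q_k)}^2$ once $d$ is chosen of order $eAN_*+1$, i.e.\ $d\asymp 1+NL$. On the other hand, the multi-dimensional Remez/Tur\'an inequality (iterated one-dimensional Remez on axis-parallel segments after fixing that the slice $E\cap Q_k$ has measure at least $\gamma$) gives, for every polynomial $P$ of degree $d$ on $Q_k$ and every measurable $F\subset Q_k$ with $|F|\ge\gamma$,
\begin{equation*}
\int_{Q_k}|P|^2\,\mathrm dx\le \Bigl(\frac{C_n}{\gamma}\Bigr)^{C_n d}\int_F|P|^2\,\mathrm dx.
\end{equation*}
Combining the polynomial approximation with this Remez bound on $F=E\cap Q_k$ yields, for every good cube,
\begin{equation*}
\int_{Q_k}|g|^2\,\mathrm dx\le \Bigl(\frac{C_n}{\gamma}\Bigr)^{C_n d}\int_{E\cap Q_k}|g|^2\,\mathrm dx,
\end{equation*}
with the chosen $d\asymp 1+NL$. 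Summing over good cubes and absorbing the bad-cube mass into the left-hand side produces the spectral inequality with exponent
\begin{equation*}
C_n d\,\ln(C_n/\gamma)\asymp (1+NL)\bigl(1+\ln(1/\gamma)\bigr),
\end{equation*}
which after undoing the rescaling is exactly the claimed form $C_{spec}=C(1+L)(1+\ln(1/\gamma))$ inside $e^{C_{spec}(1+N)}$.

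The main obstacle I anticipate is the honest $n$-dimensional Remez/Tur\'an step: the one-dimensional Remez inequality is classical and gives the sharp $(C/\gamma)^d$ factor, but extending it to cubes against an arbitrary measurable set of measure $\gamma$ requires either a Fubini-type slicing argument (controlling, for almost every coordinate hyperplane, the one-dimensional trace of $E\cap Q_k$) together with a careful selection of a ``good direction'' so that sufficiently many slices are thick, or an inductive argument on dimension. Getting a clean constant of the form $(C_n/\gamma)^{C_n d}$ — and thus the logarithmic dependence $\ln(1/\gamma)$ in $C_{spec}$ rather than something worse like a power of $1/\gamma$ — is the delicate point. All other ingredients (Bernstein, good/bad decomposition, Taylor approximation) are standard and will plug into the bookkeeping with only the dimensional constants changing.
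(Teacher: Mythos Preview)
Your overall strategy is correct and shares the scaling reduction, the unit-cube partition, and the good/bad decomposition with the paper. The difference is in how the local step on good cubes is carried out. You propose to approximate $g$ on $Q_k$ by a polynomial of degree $d\asymp 1+NL$ and then invoke an $n$-dimensional Remez inequality for polynomials on measurable sets; you rightly flag this Remez step as the delicate point. The paper sidesteps exactly this difficulty: instead of working with polynomials in $n$ variables, it reduces to a one-dimensional problem. On a good cube it first upgrades the $L^2$ derivative bounds to $L^\infty$ bounds via Sobolev embedding, picks a point $y$ where $|f|$ attains its supremum on the cube, and then uses a spherical-coordinates averaging argument to find a direction $w_0\in\mathbb S^{n-1}$ along which the one-dimensional trace $\{r\in[0,1]:y+\sqrt n\,r w_0\in E\cap Q_k\}$ has measure $\gtrsim_n\gamma$. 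The restriction $\phi(t)=f(y+\sqrt n\,t w_0)$ is then an entire function of one variable with controlled Taylor coefficients, and the paper applies Kovrijkine's one-dimensional lemma for analytic functions (the paper's Lemma~\ref{lem-good}) directly---no polynomial truncation and no multivariate Remez are needed. A final Chebyshev trick converts the resulting $L^\infty$ estimate into the desired $L^2$ bound on $E\cap Q_k$.

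Both routes lead to the same constant $C_{spec}=C(n)(1+L)(1+\ln(1/\gamma))$. Your approach is conceptually clean but requires either proving or citing a sharp $n$-dimensional Remez inequality with the $(C_n/\gamma)^{C_n d}$ constant, which is nontrivial; the paper's approach trades that for the elementary ``good direction'' averaging and the purely one-dimensional Lemma~\ref{lem-good}, which keeps the argument self-contained.
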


\begin{remark}\label{remark2.1wang}
The manner that the constant  $e^{C_{spec}(n,E)(1+N)}$ (in (\ref{intro-spec})) depends on $N$ is comparable with the manner that the constant $e^{C\sqrt \lambda}$ in \eqref{spec-bound}
 depends on $\lambda$. (This has been explained in the remark $(d_1)$ in Subsection 1.1.)
  The latter one  played an important role in the proof of
 the H\"older-type interpolation inequality (\ref{1.7GGWWSSG}) for the   heat equation (\ref{1.3WSGENG}) (see \cite{AEWZ}).
 Analogically, the previous one will play an important role in the proof of Theorem \ref{equi-thm}.

 \end{remark}

\begin{remark}\label{remark2.2wang}
In \cite{K}, the author announced the result in Lemma \ref{lem-spec} and proved it for the case when $n=1$.
  For the completeness of the paper, we give a detailed proof for Lemma \ref{lem-spec}, based on some ideas and techniques in  \cite{K}.
\end{remark}
To show Lemma \ref{lem-spec}, we need the following result  on analytic functions:
\begin{lemma}[{\cite[Lemma 1]{K}}]\label{lem-good}
Let $\Phi$ be an analytic function in $D_5(0)$ (the disc in $\mathbb{C}$, centered at origin and of radius 5). Let $I$ be an interval of length $1$ such that $0\in I$. Let $\hat E
\subset I$ be a subset of positive measure. If $|\Phi(0)|\geq 1$ and $M=\max_{|z|\leq 4}|\Phi(z)|$, then there exists a generic constant $C>0$ such that
$$
\sup_{x\in I}|\Phi(x)|\leq \left({C}/{|\hat E|}\right)^{\frac{\ln M}{\ln 2}}\sup_{x\in \hat E}|\Phi(x)|.
$$
\end{lemma}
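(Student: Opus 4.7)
The plan is to reduce the claim to the classical Remez inequality for polynomials by factoring out the zeros of $\Phi$ in an intermediate disc, say $D_2(0)$. I would write $\Phi(z)=P(z)G(z)$, where $P$ is a polynomial carrying exactly the zeros of $\Phi$ in $D_2(0)$ and $G=\Phi/P$ is an analytic, nonvanishing function on $D_2(0)$. The key arithmetical observation that drives the precise exponent in the target bound is that the number of zeros $N$ of $\Phi$ in $D_2(0)$ will satisfy $N\leq \log M/\log 2$ by Jensen's formula (using $|\Phi(0)|\geq 1$ and $\max_{|z|\leq 4}|\Phi|=M$), and this $N$ matches exactly the target exponent $\log M/\log 2$. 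Consequently, any multiplicative factor of the form $M^{O(1)}$ that arises while controlling $G$ can be rewritten as $(2^{O(1)})^{N}$ and absorbed into the base $C$ of the geometric factor $(C/|\hat E|)^{N}$.

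After this factorization, I would treat the two factors separately. For the polynomial factor $P$ of degree $\leq N$, the classical Remez inequality on the interval $I$ (of length one) directly gives $\sup_{I}|P|\leq (c/|\hat E|)^{N}\sup_{\hat E}|P|$ for an absolute constant $c>0$. For the analytic nonvanishing factor $G$ on the simply connected disc $D_2(0)$, I would take a single-valued branch of $\log G$ and apply the Borel--Carath\'eodory inequality on the discs $D_1(0)\subset D_{3/2}(0)$. Combining the rough bounds $\sup_{D_4}|G|\leq M\,2^{-N}$ (from the maximum principle together with $|P|\geq 2^N$ on $|z|=4$) and $|G(0)|\geq 2^{-N}$ (from $|P(0)|\leq 2^N$), Borel--Carath\'eodory yields an oscillation bound of the form $|\log G(z)-\log G(0)|\leq C'\log M$ throughout $D_1(0)\supset I$, so that $\sup_{I}|G|\leq M^{2C'}\inf_{I}|G|\leq M^{2C'}\inf_{\hat E}|G|$ (the last step uses $\hat E\subset I$).

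To combine the two estimates, I would use the elementary inequality $\sup_{\hat E}|\Phi|\geq \sup_{\hat E}|P|\cdot \inf_{\hat E}|G|$, which holds because $|\Phi|=|P||G|$ pointwise. Chaining gives $\sup_{I}|\Phi|\leq \sup_{I}|P|\cdot \sup_{I}|G|\leq M^{2C'}(c/|\hat E|)^{N}\sup_{\hat E}|P|\cdot \inf_{\hat E}|G|\leq M^{2C'}(c/|\hat E|)^{N}\sup_{\hat E}|\Phi|$. Since $M=2^{N\log 2/\log 2}=2^N$ up to the inequality $N\leq \log M/\log 2$, the prefactor $M^{2C'}$ equals $(2^{2C'})^{N}$ and is absorbed into the base, producing the advertised bound with $C:=c\cdot 2^{2C'}$.

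The main technical obstacle is calibrating the radii so that the Borel--Carath\'eodory loss $M^{O(1)}$ from controlling $G$ and the Jensen degree bound $N\approx \log_2 M$ combine exactly into the target exponent $\log M/\log 2$: too generous a radius in Jensen's formula would inflate $N$, while too tight a radius pair in Borel--Carath\'eodory would worsen the factor $M^{O(1)}$ beyond what $(2^{O(1)})^{N}$ can absorb. A secondary subtlety is invoking the classical Remez inequality in its sharp form $(c/|\hat E|)^{N}$ valid for arbitrary measurable subsets $\hat E\subset I$, which follows from the extremal property of the Chebyshev polynomial $T_N$ but must be quoted in that precise form, not in a weaker polynomial-growth version.
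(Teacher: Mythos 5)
This lemma is not proved in the paper at all: it is quoted verbatim from Kovrijkine \cite[Lemma 1]{K}, so there is no in-paper argument to compare against. Your proof is correct, and it is in substance the original argument of \cite{K}: Jensen's formula at radius $4$ (using $|\Phi(0)|\ge 1$) bounds the number $N$ of zeros in $D_2(0)$ by $\ln M/\ln 2$; the Remez inequality handles the polynomial factor $P$; a Harnack/Borel--Carath\'eodory estimate on a branch of $\log G$ over $D_2(0)$ handles the zero-free factor; and the pointwise identity $|\Phi|=|P|\,|G|$ lets you pass from $\sup_{\hat E}|P|\cdot\inf_{\hat E}|G|$ back to $\sup_{\hat E}|\Phi|$. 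All the radius calibrations you propose ($D_2$ for the zeros, $|P|\ge 2^N$ on $|z|=4$, $|P(0)|\le 2^N$, Borel--Carath\'eodory on $D_1\subset D_{3/2}$) check out. Two small points of care. First, your closing assertion that $M^{2C'}$ \emph{equals} $(2^{2C'})^{N}$ is not right; one only has $M^{2C'}=(2^{2C'})^{\log_2 M}\ge (2^{2C'})^{N}$. The correct absorption is to enlarge the Remez exponent, $(c/|\hat E|)^{N}\le (c/|\hat E|)^{\log_2 M}$ (valid because $c/|\hat E|\ge 1$), and to rewrite $M^{2C'}$ with exponent $\log_2 M$, after which the two factors combine into $\left(C/|\hat E|\right)^{\ln M/\ln 2}$. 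Second, the sharp Remez inequality is stated for polynomials with real coefficients; for the complex polynomial $P$ apply it to the real polynomial $x\mapsto |P(x)|^2$ of degree $2N$, which still yields $\sup_I|P|\le (c/|\hat E|)^{N}\sup_{\hat E}|P|$. Neither point affects the validity of your argument.
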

We are now in the position to prove Lemma \ref{lem-spec}.

\begin{proof}[\textbf{Proof of Lemma \ref{lem-spec}}]
 We only need to prove this lemma for the case when  $L=1$. In fact,
 suppose that this is done. Let $E$ be $\gamma$-thick at scale $L>0$.
 Define a new set:
 $$
 L^{-1}E:=\big\{L^{-1}x: x\in E \big\}.
 $$
 One can easily check  that $L^{-1}E$ is $\gamma$-thick at scale $1$.
  Given
 $N>0$ and $f\in L^2(\mathbb{R}^n)$ with $supp \widehat{f} \subset B_N$, let
 $$
 g(x): =f(Lx),\; x\in\mathbb R^n.
 $$
 One can directly check that
 $$
 \widehat{g}(\xi)=L^{-n}\widehat{f}(L^{-1}\xi),\;\xi\in\mathbb R^n;\;\;\;supp\ \widehat{g}\subset B_{LN}.
 $$
  From these, we can apply Lemma \ref{lem-spec} (with $L=1$) to the set $L^{-1}E$ and the function $g$ to find
  $C=C(n)$ so that
 \begin{eqnarray}\label{wang2.1}
\int_{\mathbb{R}^n} |g(x)|^2\,\mathrm dx &\leq& e^{2C(1+\ln\frac{1}{\gamma})(1+LN)} \int_{L^{-1}E}|g(x)|^2\,\mathrm dx\nonumber\\
&\leq& e^{2C(1+L)(1+\ln\frac{1}{\gamma})(1+N)} \int_{L^{-1}E}|g(x)|^2\,\mathrm dx.
\end{eqnarray}
  Meanwhile, by changing
 variable $x\mapsto Lx$, we deduce that
\begin{eqnarray}\label{wang2.2}
\int_{\mathbb{R}^n} |f(x)|^2\,\mathrm dx =L^{-n}\int_{\mathbb{R}^n} |g(x)|^2\,\mathrm dx;\;\;
\int_{E}|f(x)|^2\,\mathrm dx=L^{-n}\int_{L^{-1}E}|g(x)|^2\,\mathrm dx.
\end{eqnarray}
Hence,  from (\ref{wang2.1}) and (\ref{wang2.2}), we find  that
$$
\int_{\mathbb{R}^n} |f(x)|^2\,\mathrm dx \leq e^{C_{spec}(n,E)(1+N)} \int_{E}|f(x)|^2\,\mathrm dx,
$$
with
$$
C_{spec}(n,E) =2C(1+L)\Big(1+\ln \frac{1}{\gamma}\Big).
$$
This proves the lemma  for  the general case that $L>0$.

We now show Lemma \ref{lem-spec} for the case when $L=1$ by several steps.
First of all, we arbitrarily fix $N>0$ and  $f\in L^2(\mathbb{R}^n)$ with  $supp \widehat{f} \subset B_N$. Without loss of generality, we can assume that $f\neq 0$.

{\it Step 1. Bad and good cubes.} For each multi-index $j=(j_1,j_2,\cdots, j_n)\in \mathbb{Z}^n$, let
$$
Q(j):=\left\{x=(x_1,\dots,x_n)\in \mathbb{R}^n: |x_i-j_i|< 1/2\;\;\mbox{for all}\;\; i=1,2,\cdots,n\right\}.
$$
It is clear that
$$
Q(j)\bigcap Q(k)={\O} \;\;\mbox{for all}\;\; j\neq k \in \mathbb{Z}^n;\;\;\mathbb{R}^n = \bigcup_{j\in \mathbb{Z}^n}\overline{Q(j)},
$$
where $\overline{Q(j)}$ denotes the closure of $Q(j)$. From these, we have that
\begin{align}\label{equ-spec-1}
\int_{\mathbb{R}^n}|f(x)|^2\,\mathrm dx = \sum_{j\in \mathbb{Z}^n}\int_{Q(j)}|f(x)|^2\,\mathrm dx.
\end{align}

We will divide $\{Q(j) : j\in \mathbb{Z}^n\}$ into two disjoint parts whose elements
are respectively called ``good cubes" and  ``bad cubes". And then we compare $\displaystyle\int_{\mathbb{R}^n}|f|^2$ with
$\displaystyle\int_{\bigcup{Q(j)\textmd{ is bad }}}|f|^2$
and $\displaystyle\int_{\bigcup{Q(j)\textmd{ is good }}}|f|^2$, respectively.
First, we define the function:
$$
h(s):=s^{n}(s-1)^{-n}-1,\; s\in [2,+\infty).
$$
It is a continuous and strictly decreasing function satisfying that
$$
h(2)\geq 1, \quad \lim_{s\rightarrow +\infty}h(s)=0.
$$
Thus we can take $A_0$ as the unique point in $[2,+\infty)$ so that $h(A_0)=1/2$. Clearly,
$A_0$ depends only on $n$, i.e., $A_0=A_0(n)$. Given $j\in \mathbb{Z}^n$, $Q(j)$ is said to be {\it a good cube}, if for each $\beta\in \mathbb{N}^n$,
\begin{align}\label{equ-spec-7}
\int_{Q(j)}|\partial^\beta_xf(x)|^2\,\mathrm dx \leq A_0^{|\beta|}N^{2|\beta|}\int_{Q(j)}|f(x)|^2\,\mathrm dx.
\end{align}
When $Q(j)$ is not a good cube, it is called as {\it a bad cube}. Thus, when $Q(j)$ is a bad cube, there is  $\beta \in \mathbb{N}^n$, with $|\beta|>0$,  so that
\begin{align}\label{equ-spec-4}
\int_{Q(j)}|\partial^\beta_xf(x)|^2\,\mathrm dx > A_0^{|\beta|}N^{2|\beta|}\int_{Q(j)}|f(x)|^2\,\mathrm dx.
\end{align}

Using the  Plancherel theorem and the assumption that $supp \widehat{f} \subset B_N(0)$, we obtain that for each $\beta\in \mathbb{N}^n$,
\begin{align}\label{equ-spec-3}
\int_{\mathbb{R}^n}|\partial^\beta_xf(x)|^2\,\mathrm dx&=\int_{\mathbb{R}^n}\Big|\widehat{\partial^\beta_xf}(\xi)\Big|^2\,\mathrm d\xi
 = \int_{\mathbb{R}^n} \Big|(i\xi)^\beta \widehat{f}(\xi)\Big|^2\,\mathrm d\xi= \int_{|\xi|
\leq N} \Big|\xi^\beta \widehat{f}(\xi)\Big|^2\,\mathrm d\xi\nonumber\\
&\leq N^{2|\beta|}\int_{|\xi|\leq N} |\widehat{f}(\xi)|^2\,\mathrm d\xi= N^{2|\beta|}\int_{\mathbb{R}^n}|f(x)|^2.
\end{align}
 Meanwhile, it follows by (\ref{equ-spec-4}) that when $Q(j)$ is a bad cube,
\begin{align}\label{equ-3-21-1}
\int_{Q(j)}|f(x)|^2\,\mathrm dx\leq \sum_{\beta \in \mathbb{N}^n, |\beta|>0}A_0^{-|\beta|}N^{-2|\beta|}\int_{Q(j)}|\partial^\beta_xf(x)|^2\,\mathrm dx.
\end{align}
Since $Q(j)$, $j\in \mathbb{Z}^n$, are disjoint, by taking the sum in \eqref{equ-3-21-1} for all bad cubes,  we find that
\begin{align}\label{equ-3-21-2}
\int_{\bigcup{Q(j)\textmd{ is bad }}}|f(x)|^2\,\mathrm dx &\leq \sum_{\beta \in \mathbb{N}^n, |\beta|>0}A_0^{-|\beta|}N^{-2|\beta|}\int_{\bigcup{Q(j)\textmd{ is bad }}}|\partial^\beta_xf(x)|^2\,\mathrm dx\nonumber\\
&\leq \sum_{\beta \in \mathbb{N}^n, |\beta|>0}A_0^{-|\beta|}N^{-2|\beta|}\int_{\mathbb{R}^n}|\partial^\beta_xf(x)|^2\,\mathrm dx.
\end{align}
From \eqref{equ-spec-3} and \eqref{equ-3-21-2}, we have that
\begin{align}\label{equ-spec-4.5}
\int_{\bigcup{Q(j)\textmd{ is bad }}}|f(x)|^2\,\mathrm dx
&\leq \sum_{\beta \in \mathbb{N}^n, |\beta|>0}A_0^{-|\beta|}\int_{\mathbb{R}^n}|f(x)|^2\,\mathrm dx\nonumber\\
&=\big(A_0^{n}(A_0-1)^{-n}-1\big)\int_{\mathbb{R}^n}|f(x)|^2\,\mathrm dx.
\end{align}
Since $h(A_0)=\frac{1}{2}$, it follows from (\ref{equ-spec-4.5}) that
\begin{align}\label{equ-spec-5}
\int_{\bigcup{Q(j)\textmd{ is bad }}}|f(x)|^2\,\mathrm dx \leq \frac{1}{2} \int_{\mathbb{R}^n}|f(x)|^2\,\mathrm dx.
\end{align}
By \eqref{equ-spec-1} and \eqref{equ-spec-5}, we obtain that
\begin{align}\label{equ-spec-6}
\int_{\bigcup{Q(j)\textmd{ is good }}}|f(x)|^2\,\mathrm dx \geq \frac{1}{2}\int_{\mathbb{R}^n}|f(x)|^2\,\mathrm dx.
\end{align}

{\it Step 2.  Properties on good cubes.}
Arbitrarily fix  a good cube ${Q(j)}$.
We will prove some properties related to ${Q(j)}$.
First of all, we  claim that there is $C_0(n)>0$ so that
\begin{align}\label{good-7}
\left\|\partial^\beta_xf\right\|_{L^\infty(Q(j))}\leq C_0(n)(1+N)^n\left(\sqrt{A_0}N\right)^{|\beta|}\|f\|_{L^2(Q(j))}\;\;\mbox{for all}\;\;
\beta\in \mathbb{N}^n.
\end{align}
In fact, according to \eqref{equ-spec-7},  there is $C_1(n)>0$ so that
\begin{align}\label{good-6}
\|\partial^\beta_xf\|_{W^{n,2}(Q(j))}&=\sum_{\mu\in \mathbb{N}^n, |\mu|\leq n}\left(\int_{Q(j)}|\partial^{\beta+\mu}_xf(x)|^2\,\mathrm dx\right)^{1/2}\nonumber\\
&\leq \sum_{\mu\in \mathbb{N}^n, |\mu|\leq n} A_0^{\frac{|\beta+\mu|}{2}}N^{|\beta+\mu|}\|f\|_{L^2(Q(j))}\nonumber\\
&\leq C_1(n)(1+N)^n\left(\sqrt{A_0}N\right)^{|\beta|}\|f\|_{L^2(Q(j))}\;\;\mbox{for all}\;\;
\beta\in \mathbb{N}^n.
\end{align}
Meanwhile, because $Q(j)$ satisfies the cone condition, we can apply the Sobolev embedding theorem $W^{n,2}(Q(j))\hookrightarrow L^\infty(Q(j))$ to find $C_2(n)>0$ so that
$$
\|\varphi\|_{L^\infty(Q(j))}\leq C_2(n)\|\varphi\|_{W^{n,2}(Q(j))}\;\; \mbox{for all}\;\; \varphi\in W^{n,2}(Q(j)).
$$
 This, along with   \eqref{good-6}, leads to (\ref{good-7}).

Next, we let $y\in\overline{Q(j)}$ satisfy that
\begin{align}\label{good-1}
\|f\|_{L^\infty(Q(j))}=|f(y)|.
\end{align}
(Due to the continuity of $|f|$ over $\mathbb{R}^n$, such $y$ exists.)
 Because the diameter of $Q(j)$ is $\sqrt{n}$, we can use the spherical coordinates centered at $y$ to obtain that
 \begin{align}\label{good-2}
| E\bigcap Q(j)|&=\int_0^\infty dr\int_{|x-y|=r}\chi_{E\bigcap Q(j)}(x)d\sigma\nonumber\\
&=\int_0^{\sqrt{n}} dr\int_{|x-y|=r}\chi_{E\bigcap Q(j)}(x)d\sigma\nonumber\\
&=\sqrt{n}\int_0^1 dr\int_{|x-y|=\sqrt{n}r}\chi_{E\bigcap Q(j)}(x)d\sigma.
\end{align}
In \eqref{good-2}, we change the variable:
 $$
 x=y+\sqrt{n}rw\;\;\mbox{ with}\;\;w\in \mathbb{S}^{n-1},
 $$
  and then obtain
that
\begin{align}\label{good-3}
| E\bigcap Q(j)|&=\sqrt{n}\int_0^1 (\sqrt{n} r)^{n-1}dr\int_{\mathbb{S}^{n-1}}\chi_{E\bigcap Q(j)}(y+\sqrt{n}rw)d\sigma\nonumber\\
&\leq n^{n/2}\int_0^1 dr\int_{\mathbb{S}^{n-1}}\chi_{E\bigcap Q(j)}(y+\sqrt{n}rw)d\sigma.
\end{align}
For each $w\in \mathbb{S}^{n-1}$, let
\begin{eqnarray}\label{2.17GGSSWang}
I_{w}\triangleq \Big\{r\in [0,1]\;:\; y+\sqrt{n}rw\in E\bigcap Q(j)\Big\}.
\end{eqnarray}
Since
$$
|\mathbb{S}^{n-1}|=\frac{2\pi^{\frac{n}{2}}}{\Gamma(\frac{n}{2})},\;\;
\mbox{where}\;\;\Gamma(\cdot)\;\;\mbox{is the Gamma function},
$$
it follows from \eqref{equ-set} and \eqref{good-3} that
\begin{align}\label{good-4}
|I_{\omega_0}|\geq \frac{|E\bigcap Q(j)|}{n^{n/2}|\mathbb{S}^{n-1}|}\geq \frac{\Gamma(\frac{n}{2})}{2(n\pi)^{n/2}}\gamma\;\;\mbox{for some}\;\;w_0\in \mathbb{S}^{n-1}.
\end{align}

Then  we define a function $\phi(\cdot)$ over $[0,1]$ by
\begin{equation}\label{dub2}
\phi(t)=\frac{f\left(y+\sqrt{n}tw_0\right)}{\|f\|_{L^2(Q(j))}},\;\; t\in [0,1].
\end{equation}
(Since $f\in L^2(\mathbb{R}^n)$ satisfies that  $supp \;\widehat{f} \subset B_N$, we have that $f$ is analytic over $\mathbb{R}^n$. Consequently, $\|f\|_{L^2(Q(j))}\neq 0$ because we assumed that $f\neq 0$ over $\mathbb{R}^n$.)

We claim that $\phi(t)$ can be extended to  an entire function in the complex plane.
In fact, by (\ref{dub2}), one can directly check that
\begin{align}\label{good-5}
\big|\phi^{(k)}(0)\big|\leq \frac{n^{\frac{3}{2}k}\max_{|\beta|=k}\big\|D^\beta f\big\|_{L^\infty(Q(j))}}{\|f\|_{L^2(Q(j))}}\;\;
\mbox{for all}\;\; k\geq0.
\end{align}
By \eqref{good-5} and \eqref{good-7}, we see that
\begin{align}\label{good-8}
\big|\phi^{(k)}(0)\big|\leq C_0(n)(1+N)^n\Big(n^{\frac{3}{2}}\sqrt{A_0}N\Big)^{k}\;\;
\mbox{for all}\;\; k\geq0.
\end{align}
From (\ref{good-8}), we find that
 \begin{align}\label{good-4.5}
\phi(t)=\phi(0)+\phi'(0)t+\cdots+\frac{\phi^{(k)}(0)}{k!}t^k+\cdots, \quad t\in [0,1],
\end{align}
and that the series in (\ref{good-4.5}), with $t$ being replaced by any $z\in \mathbb{C}$, is convergent.  Thus, the above claim is true. {\it From now on, we will use $\phi(z)$ to denote the extension of $\phi(t)$ over $\mathbb{C}$.}

{\it Step 3.  Recovery of the $L^2(\mathbb R^n)$ norm.}
We will finish our proof in this step.
Applying Lemma \ref{lem-good}, where
$$
I=[0,1],\;\;\hat E=I_{w_0}\;(\mbox{defined by}\;(\ref{2.17GGSSWang})\;\mbox{and}\; (\ref{good-4}))\; \;\mbox{and}\;\;\Phi=\phi,
$$
 and then using \eqref{good-4},  we can find $C_3=C_3(n)$ so that
\begin{align}\label{good-9}
\sup_{t\in [0,1]}|\phi(t)|&\leq \left({C}/{|I_{w_0}|}\right)^{\frac{\ln M}{\ln 2}}\sup_{t\in I_{w_0}}|\phi(t)|\nonumber\\
&\leq \left(\frac{2 C(n\pi)^{n/2}}{\gamma\Gamma(\frac{n}{2})}\right)^{\frac{\ln M}{\ln 2}}\sup_{t\in I_{w_0}}|\phi(t)|\nonumber\\
&\leq M^{C_3\left(1+\ln\frac{1}{\gamma}\right)} \sup_{t\in I_{w_0}}|\phi(t)|,
\end{align}
 where
\begin{align}\label{dgood-16}
M &= \max_{|z|\leq 4}|\phi(z)|.
\end{align}
Two facts are given in order. First, it follows from \eqref{good-1} and \eqref{dub2} that
$$
\frac{\|f\|_{L^\infty(Q(j))}}{\|f\|_{L^2(Q(j))}}
=\frac{|f(y)|}{\|f\|_{L^2(Q(j))}}=|\phi(0)|.
$$
Second, it follows  by the definition of $I_{w_0}$ (see (\ref{2.17GGSSWang})
and
(\ref{good-4})) that
$$
\sup_{t\in I_{w_0}}|\phi(t)|\leq \frac{\|f\|_{L^\infty(E\bigcap Q(j))}}{\|f\|_{L^2(Q(j))}}.
$$
The above two facts, along with  \eqref{good-9}, yield  that
\begin{align}\label{good-10}
\|f\|_{L^\infty(Q(j))}\leq M^{C_3\left(1+\ln\frac{1}{\gamma}\right)} \|f\|_{L^\infty(E\bigcap Q(j))}.
\end{align}

We next define
$$
E'\triangleq \Big\{x\in E\bigcap Q(j): |f(x)|\leq \frac{2}{|E\bigcap Q(j)|}\int_{E\bigcap Q(j)}|f(x)|\,\mathrm dx\Big\}.
$$
By the Chebyshev inequality, we have that
$$
|E'|\geq \frac{|E\bigcap Q(j)|}{2}\geq\frac{\gamma}{2}.
$$
By the same argument as that used in the proof of \eqref{good-10},  one can obtain that
\begin{align}\label{good-11}
\|f\|_{L^\infty(Q(j))}\leq M^{C_4\left(1+\ln\frac{1}{\gamma}\right)} \|f\|_{L^\infty(E'\bigcap Q(j))}
\;\;\mbox{for some}\;\; C_4=C_4(n).
\end{align}
Meanwhile, it follows by the definition of $E'$ that
\begin{align}\label{good-12}
 \|f\|_{L^\infty(E'\bigcap Q(j))}\leq \frac{2}{|E\bigcap Q(j)|}\int_{E\bigcap Q(j)}|f(x)|\,\mathrm dx.
\end{align}
From \eqref{good-11}, \eqref{good-12} and  the H\"{o}lder inequality,
we find
that
\begin{align}\label{good-15}
\int_{Q(j)}|f|^2\,\mathrm dx \leq \frac{4}{\gamma}M^{2C_4\left(1+\ln\frac{1}{\gamma}\right)}\int_{E\bigcap Q(j)}|f|^2\,\mathrm dx.
\end{align}
The term $M$ (given by \eqref{dgood-16}) can be estimated by
 \eqref{good-8} as follows:
\begin{align}\label{good-16}
M &= \max_{|z|\leq 4}|\phi(z)|\nonumber\\
&\leq \max_{|z|\leq 4}\sum_{k=0}^\infty\frac{\phi^{(k)}(0)}{k!}|z|^k\nonumber\\
&\leq C_2(n)(1+N)^n\sum_{k=0}^\infty \frac{\big(4n^{\frac{3}{2}}\sqrt{A_0}N\big)^{k}}{k!}\nonumber\\
&\leq e^{C_5(1+N)}\;\;\mbox{for some}\;\; C_5=C_5(n).
\end{align}

Finally, combining \eqref{good-15} and \eqref{good-16} leads to that
\begin{align}\label{good-17}
\int_{Q(j)}|f|^2\,\mathrm dx \leq e^{C_6(1+N)\left(1+\ln\frac{1}{\gamma}\right)}\int_{E\bigcap Q(j)}|f|^2\,\mathrm dx \;\;\mbox{for some}\;\; C_6=C_6(n).
\end{align}
Taking the sum in \eqref{good-17} for all good cubes, using \eqref{equ-spec-6},   we
see that
\begin{align*}
\int_{\mathbb{R}^n}|f|^2\,\mathrm dx &\leq 2\sum_{ Q(j)\textmd{ is good }}\int_{Q(j)}|f|^2\,\mathrm dx\\
 &\leq 2\sum_{ Q(j)\textmd{ is good }}e^{C_6(1+N)\left(1+\ln\frac{1}{\gamma}\right)}\int_{E\bigcap Q(j)}|f|^2\,\mathrm dx\\
&\leq 2e^{C_6(1+N)\left(1+\ln\frac{1}{\gamma}\right)}\int_{E}|f|^2\,\mathrm dx,
\end{align*}
which leads to (\ref{intro-spec}),
where
$$
C_{spec}(n,E) =(C_6+1)(1+L)\Big(1+\ln \frac{1}{\gamma}\Big),\;\;\mbox{with}\;\;L=1.
$$
This ends the proof of Lemma \ref{lem-spec}.
\end{proof}

 Lemma \ref{lem-hold} and Lemma \ref{lem-obs}
deal with  connections among the spectral inequality (\ref{intro-spec}), the H\"older-type interpolation
inequality \eqref{interpolation}
and the observability inequality (\ref{wangobs1.8}). In their proofs, we  borrowed  some ideas and techniques from  \cite{AEWZ, PW}.

\begin{lemma}\label{lem-hold}
Suppose that a measurable set $E\subset \mathbb{R}^n$ satisfies the spectral inequality
(\ref{intro-spec}). Then $E$ satisfies
the H\"older-type interpolation \eqref{interpolation}, with
$$
C_{Hold} = \frac{1}{1-\theta}(C_{spec}+1)^2 + \ln 12.
$$
\end{lemma}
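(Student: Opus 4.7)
\medskip

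\noindent\textbf{Proof plan for Lemma \ref{lem-hold}.} The strategy is the classical frequency-splitting argument of Lebeau--Robbiano type: decompose $u(T,\cdot)$ into a low-frequency piece, to which the spectral inequality applies directly, and a high-frequency piece, which the heat semigroup damps exponentially. First I would fix $N>0$ and introduce the frequency projection $P_N$ defined by $\widehat{P_N f}=\chi_{B_N}\widehat f$. Writing $u(T)=P_Nu(T)+(I-P_N)u(T)$ and using Parseval together with the Fourier representation $\widehat u(T,\xi)=e^{-T|\xi|^2}\widehat{u_0}(\xi)$ of the solution, I would obtain the smoothing estimate
\begin{equation*}
\|(I-P_N)u(T)\|_{L^2(\mathbb R^n)}\le e^{-TN^2}\|u_0\|_{L^2(\mathbb R^n)}.
\end{equation*}
Since $P_Nu(T)$ has Fourier support in $B_N$, the spectral inequality \eqref{intro-spec} gives
$\|P_Nu(T)\|_{L^2(\mathbb R^n)}^2\le e^{C_{spec}(1+N)}\|P_Nu(T)\|_{L^2(E)}^2$,
and the triangle inequality $\|P_Nu(T)\|_{L^2(E)}\le\|u(T)\|_{L^2(E)}+\|(I-P_N)u(T)\|_{L^2(\mathbb R^n)}$ combined with $(x+y)^2\le2(x^2+y^2)$ yields the key ``two-term'' bound, valid for every $N>0$:
\begin{equation*}
\|u(T)\|_{L^2(\mathbb R^n)}^2\le 6\,e^{C_{spec}(1+N)}\|u(T)\|_{L^2(E)}^2+6\,e^{C_{spec}(1+N)-2TN^2}\|u_0\|_{L^2(\mathbb R^n)}^2.
\end{equation*}

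Abbreviate $a=\|u(T)\|_{L^2(\mathbb R^n)}$, $b=\|u(T)\|_{L^2(E)}$, $c=\|u_0\|_{L^2(\mathbb R^n)}$. The contraction property $a\le c$ of the heat semigroup (immediate from Parseval) handles the trivial regime $b\ge c$: there $a^2\le c^2\le b^{2\theta}c^{2(1-\theta)}$. In the nontrivial regime $b<c$, I would choose $N$ to balance the two terms, i.e.\ $e^{-2TN^2}c^2=b^2$, which forces
\begin{equation*}
N=\sqrt{T^{-1}\ln(c/b)},\qquad a^2\le 12\,e^{C_{spec}}\,e^{C_{spec}N}\,b^2.
\end{equation*}

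The main technical obstacle is the last step: converting the sub-exponential factor $e^{C_{spec}N}=e^{C_{spec}\sqrt{\ln(c/b)/T}}$ into an honest power $(c/b)^{2(1-\theta)}$ so as to produce a H\"older-type bound with the constant claimed. For this I would apply a weighted Cauchy inequality $\alpha\beta\le\alpha^2/(4\eta)+\eta\beta^2$ with $\alpha=C_{spec}/\sqrt T$, $\beta=\sqrt{\ln(c/b)}$ and $\eta=2(1-\theta)$, obtaining
\begin{equation*}
C_{spec}N\le\frac{C_{spec}^2}{8(1-\theta)T}+2(1-\theta)\ln(c/b).
\end{equation*}
Substituting back gives $a^2\le 12\,e^{C_{spec}+C_{spec}^2/(8(1-\theta)T)}\,b^{2\theta}c^{2(1-\theta)}$. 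Finally I would verify that the exponent $\ln12+C_{spec}+C_{spec}^2/(8(1-\theta)T)$ is bounded by $C_{Hold}(1+1/T)$ with $C_{Hold}=(C_{spec}+1)^2/(1-\theta)+\ln12$: splitting using $1+1/T\ge1$ and $1+1/T\ge1/T$, the constant part is absorbed via $(C_{spec}+1)^2/(1-\theta)\ge C_{spec}$ and $\ln12\le\ln12$, while the $1/T$ part uses $(C_{spec}+1)^2/(1-\theta)\ge C_{spec}^2/(8(1-\theta))$. The delicate point is choosing the weight $\eta=2(1-\theta)$ so that the linear-in-$\ln(c/b)$ term produces \emph{exactly} the right power for the H\"older interpolation; any other choice would either fail or degrade the dependence of $C_{Hold}$ on $\theta$.
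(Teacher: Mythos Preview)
Your proposal is correct and follows essentially the same Lebeau--Robbiano frequency-splitting approach as the paper. The only cosmetic difference is in the optimization step: the paper introduces an auxiliary parameter $\varepsilon\in(0,1)$, chooses $N$ so that $e^{C_{spec}N-TN^2}=\varepsilon$, and then sets $\varepsilon=\big(\|u(T)\|_{L^2(E)}^2/\|u_0\|_{L^2}^2\big)^\theta$, whereas you balance the two terms directly and apply a weighted Young inequality; both routes yield the stated constant.
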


\begin{proof}
Let $E\subset \mathbb{R}^n$ satisfy the spectral inequality
(\ref{intro-spec}). Arbitrarily fix $T>0$, $\theta\in (0,1)$ and
a solution $u$ to \eqref{heat}. Write
$$
u_0(x)=u(0,x),\; x\in \mathbb{R}^n.
$$
 Then
we have that
$$
u(T,x)=\left(e^{T\triangle}u_0\right)(x)\;\;\mbox{ for all}\;\;x\in \mathbb{R}^n.
$$
 Given $N>0$, write respectively
$\chi_{\leq N}(D)$ and $\chi_{> N}(D)$ for the  multiplier operators with the symbols $\chi_{\{|\xi|\leq N\}}$ and $\chi_{\{|\xi|> N\}}$. Namely, for each $ g \in L^2(\mathbb{R}^n)$,
$$
\widehat{\chi_{\leq N}(D)g}(\xi) = \chi_{\{|\xi|\leq N\}}\widehat{g}(\xi)
\;\;\mbox{and}\;\;\widehat{\chi_{> N}(D) g}(\xi) = \chi_{\{|\xi|>N\}}\widehat{g}(\xi)
\;\;\mbox{for a.e.}\;\; \xi\in\mathbb{R}^n.
$$
Then we can express    $u_0$ as:
$$
u_0 = \chi_{\leq N}(D)u_0 + \chi_{> N}(D)u_0.
$$
From this and (\ref{intro-spec}), we can easily check that
\begin{align}\label{wang2.30}
&\int_{\mathbb{R}^n}|u(T,x)|^2\,\mathrm dx\nonumber\\
&\leq 2
\int_{\mathbb{R}^n}\big|\big(e^{T\triangle}\chi_{\leq N}(D)u_0\big)(x)\big|^2\,\mathrm dx + 2\int_{\mathbb{R}^n}\big|\big(e^{T\triangle}\chi_{> N}(D)u_0\big)(x)\big|^2\,\mathrm dx\nonumber\\
&\leq 2e^{C_{spec}(1+N)}\int_{E}\big|\big(e^{T\triangle}\chi_{\leq N}(D)u_0\big)(x)\big|^2\,\mathrm dx + 2\int_{\mathbb{R}^n}\big|\big(e^{T\triangle}\chi_{> N}(D)u_0\big)(x)\big|^2\,\mathrm dx\nonumber\\
&\leq 4e^{C_{spec}(1+N)}\int_{E}\big|\big(e^{T\triangle}u_0\big)(x)\big|^2\,\mathrm dx+ \Big(2+4e^{C_{spec}(1+N)}\Big)\int_{\mathbb{R}^n}\big|\big(e^{T\triangle}\chi_{> N}(D)u_0\big)(x)\big|^2\,\mathrm dx.
\end{align}
Since
\begin{align*}
\int_{\mathbb{R}^n}\big|\big(e^{T\triangle}\chi_{> N}(D)u_0\big)(x)\big|^2\,\mathrm dx &= \int_{\mathbb{R}^n}\big|e^{-T|\xi|^2}\chi_{> N}(\xi)\widehat{u_0}(\xi)\big|^2\,\mathrm d\xi\\
&\leq e^{-TN^2}\int_{\mathbb{R}^n}\big|\widehat{u_0}(\xi)\big|^2\,\mathrm d\xi\\
 &= e^{-TN^2}\int_{\mathbb{R}^n}|u_0(x)|^2\,\mathrm dx,
\end{align*}
it follows from (\ref{wang2.30}) that
\begin{align}\label{wang2.31}
\int_{\mathbb{R}^n}|u(T,x)|^2\,\mathrm dx
&\leq 4e^{C_{spec}(1+N)}\int_{E}|u(T,x)|^2\,\mathrm dx + \big(2+4e^{C_{spec}(1+N)}\big)e^{-TN^2}\int_{\mathbb{R}^n}|u_0(x)|^2\,\mathrm dx\nonumber\\
&\leq 6e^{C_{spec}}\left(e^{C_{spec}N}\int_{E}|u(T,x)|^2\,\mathrm dx + e^{C_{spec}N-TN^2}\int_{\mathbb{R}^n}|u_0(x)|^2\,\mathrm dx\right).
\end{align}

Given $\varepsilon \in (0,1)$, choose $N=N(\varepsilon)$ so that
$$
 \exp\left[{C_{spec}N-TN^2}\right]=\varepsilon.
$$
(This can be done since the set: $\{C_{spec}s-Ts^2: s>0\}$ contains $(-\infty,0]$.)
 With the above choice of $N$, we have that
$$
N=\frac{C_{spec}+\sqrt{ C^2_{spec}+4T\ln\frac{1}{\varepsilon}}}{2T}\leq \frac{1}{T}\left(C_{spec}+\sqrt{T\ln\frac{1}{\varepsilon}}\right).
$$
Thus, with $\theta\in (0,1)$ fixed before, we see that
\begin{align*}
\exp\left[C_{spec}N\right]&\leq \exp\left[\frac{C^2_{spec}}{T}\right]\exp\left[\frac{C_{spec}}
{\sqrt{T}}\sqrt{\ln\frac{1}{\varepsilon}}\right]\\
&\leq \exp\left[\frac{C^2_{spec}}{T}\right]
\exp\left[\frac{1-\theta}{\theta}\ln\frac{1}{\varepsilon}
+\frac{\theta}{1-\theta}\frac{C^2_{spec}}{T}\right]\\
&= \exp\left[{\frac{ C^2_{spec}}{(1-\theta)T}}\right]\varepsilon^{-\frac{1-\theta}{\theta}}.
\end{align*}
From this and (\ref{wang2.31}), we find that  for every $\varepsilon\in (0,1)$,
\begin{align*}
\int_{\mathbb{R}^n}|u(T,x)|^2\,\mathrm dx\leq 6e^{C_{spec}}\left(e^{\frac{ C^2_{spec}}{(1-\theta)T}}\varepsilon^{-\frac{1-\theta}{\theta}}\int_{E}|u(T,x)|^2\,\mathrm dx + \varepsilon\int_{\mathbb{R}^n}|u_0(x)|^2\,\mathrm dx\right).
\end{align*}
Choosing in the above
$$
\varepsilon = \left( \frac{\int_{E}|u(T,x)|^2\,\mathrm dx}{\int_{\mathbb{R}^n}|u_0(x)|^2\,\mathrm dx}\right)^\theta,
$$
we obtain that
\begin{align*}
\int_{\mathbb{R}^n}|u(T,x)|^2\,\mathrm dx&\leq 12e^{C_{spec}}e^{\frac{ C^2_{spec}}{(1-\theta)T}}\left( \int_{E}|u(T,x)|^2\,\mathrm dx\right)^\theta\left( \int_{\mathbb{R}^n}|u_0(x)|^2\,\mathrm dx\right)^{1-\theta}\\
&\leq  e^{\left(\frac{1}{1-\theta}(C_{spec}+1)^2 + \ln 12\right)\left(1+\frac{1}{T}\right)}\left( \int_{E}|u(T,x)|^2\,\mathrm dx\right)^\theta\left( \int_{\mathbb{R}^n}|u_0(x)|^2\,\mathrm dx\right)^{1-\theta},
\end{align*}
which leads to \eqref{interpolation} with
$$
C_{Hold} = \frac{1}{1-\theta}\left(C_{spec}+1\right)^2 + \ln 12.
$$
This ends the proof of Lemma \ref{lem-hold}.
\end{proof}

\begin{lemma}\label{lem-obs}
Suppose that a measurable set  $E\subset \mathbb{R}^n$ has the property:
there is a positive constant $C_{Hold}=C_{Hold}(n,E)$ so that  for any $T>0$,
\begin{align}\label{assume-interpolation}
\int_{\mathbb{R}^n}|u(T,x)|^2\,\mathrm dx\leq e^{C_{Hold}\big(1+\frac{1}{T}\big)}\Big( \int_{E}|u(T,x)|^2\,\mathrm dx\Big)^{1/2}\Big( \int_{\mathbb{R}^n}|u(0,x)|^2\,\mathrm dx\Big)^{1/2},
\end{align}
when $u$ solves the equation \eqref{heat}.
Then  for each $T>0$ and each subset $F\subset (0,T)$  of positive measure, there is a positive constant $C_{obs}=C_{obs}(n,T,F,C_{Hold})$   so that when $u$ solves \eqref{heat},
\begin{equation}\label{duc2}
\int_{\mathbb{R}^n}|u(T,x)|^2\,\mathrm dx\leq C_{obs} \int_{F} \int_{E}|u(s,x)|^2\,\mathrm dx\mathrm ds.
\end{equation}
In particular, if $F=(0,T)$ then the constant $C_{obs}$ in (\ref{duc2}) can be expressed as:
$$
C_{obs}=\exp\left[36(1+3C_{Hold})(1+{1}/{T})\right].
$$
\end{lemma}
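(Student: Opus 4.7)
The plan is to derive \eqref{duc2} from the hypothesis \eqref{assume-interpolation} by the telescoping series method of Phung--Wang. First I would recast the interpolation hypothesis in a shifted, Young-softened form. By time-translation invariance of the heat equation, for each $0\le s<t$ the shifted function $v(\tau):=u(s+\tau)$ is again a solution of \eqref{heat}, so applying \eqref{assume-interpolation} to $v$ at time $t-s$ gives
\[
\|u(t)\|_{L^2(\mathbb{R}^n)}^2 \;\le\; e^{C_{Hold}(1+1/(t-s))}\,\|u(t)\|_{L^2(E)}\,\|u(s)\|_{L^2(\mathbb{R}^n)}.
\]
Applying $ab\le\varepsilon a^2+b^2/(4\varepsilon)$ to the product on the right yields, for every $\varepsilon>0$,
\[
\|u(t)\|_{L^2(\mathbb{R}^n)}^2 \;\le\; \varepsilon\,\|u(s)\|_{L^2(\mathbb{R}^n)}^2 \;+\; \frac{e^{2C_{Hold}(1+1/(t-s))}}{4\varepsilon}\,\|u(t)\|_{L^2(E)}^2. \qquad(\star)
\]
The polynomial dependence on $1/\varepsilon$ in $(\star)$ is the structural feature that keeps the eventual telescoping finite.

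Next, for a general measurable $F\subset(0,T)$ with $|F|>0$ I would invoke the Lebesgue density theorem to pick a density-one point $\ell\in F\cap(0,T)$, so that $|F\cap[\ell,\ell+r]|\ge r/2$ for all sufficiently small $r>0$. I construct a decreasing sequence $T=\ell_0>\ell_1>\ell_2>\cdots\searrow\ell$, with slices $I_k=[\ell_{k+1},\ell_k]$, choosing the gaps so that (i) the density bound forces $|F\cap I_k|\ge c\,|I_k|$ and (ii) the exponential factor entering the recursion can be balanced by the auxiliary weights chosen below. In the special case $F=(0,T)$ one simply takes $\ell=0$ and $\ell_k=T\cdot 2^{-k}$, and the density factor drops out. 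On each slice apply $(\star)$ with $s=\ell_{k+2}$ and $t$ varying in $F\cap I_k$; the contractivity of the heat semigroup on $L^2(\mathbb{R}^n)$ gives $\|u(\ell_k)\|^2\le\|u(t)\|^2$ for $t\le\ell_k$, so averaging $t$ over $F\cap I_k$ produces a two-step recursion
\[
a_k\;\le\;\varepsilon_k\,a_{k+2}\;+\;\frac{e^{2C_{Hold}(1+1/(\ell_k-\ell_{k+2}))}}{4\,\varepsilon_k\,|F\cap I_k|}\int_{F\cap I_k}\|u(\tau)\|_{L^2(E)}^2\,\mathrm d\tau,\qquad a_k:=\|u(\ell_k)\|_{L^2(\mathbb{R}^n)}^2.
\]
Iterating along the even (resp.\ odd) subsequence yields
\[
\|u(T)\|_{L^2(\mathbb{R}^n)}^2\;\le\;\Bigl(\prod_{j\ge0}\varepsilon_{2j}\Bigr)\|u(\ell)\|_{L^2(\mathbb{R}^n)}^2 \;+\;\sum_{k\ge0}w_k\int_{F\cap I_k}\|u(\tau)\|_{L^2(E)}^2\,\mathrm d\tau,
\]
where the weights $w_k$ are products of the $\varepsilon_j$'s and of the exponential factors. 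The $\varepsilon_k$ are to be chosen so that simultaneously $\prod_j\varepsilon_{2j}=0$ (killing the first term, since $\|u(\ell)\|\le\|u(0)\|$ is finite) and $\sup_k w_k<\infty$; the sum is then bounded by $(\sup_k w_k)\int_F\|u(\tau)\|_{L^2(E)}^2\,\mathrm d\tau$, giving \eqref{duc2}.

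The hard part will be the tuning of $\{\varepsilon_k\}$ and of the time partition so that both requirements $\prod\varepsilon_{2j}=0$ and $\sup_k w_k<\infty$ hold simultaneously; indeed, naive choices make the interpolation exponent $e^{2C_{Hold}/(\ell_k-\ell_{k+2})}$ blow up so fast that a constant-$\varepsilon_k$ iteration degenerates. The Phung--Wang technique resolves this by prescribing $\varepsilon_k$ in terms of auxiliary sub-geometric sequences whose growth precisely cancels the exponential factor at each scale, exploiting the fact that the term $\varepsilon_k a_{k+2}$ in the recursion gets absorbed by the $a_{k+2}$ bound at the next level. For the special case $F=(0,T)$ the balance can be carried out explicitly: bundling the factor $1/(\ell_k-\ell_{k+2})$ with the $\varepsilon_k$ decay, collecting the factor $(1+1/T)$ that arises from the interpolation exponent at the scale of the full interval, and accumulating the combinatorial factors from the iteration, one recovers the quoted constant $C_{obs}=\exp[36(1+3C_{Hold})(1+1/T)]$, where the $36$ is an absolute constant and $3C_{Hold}$ is the exponent inflated by the telescoping.
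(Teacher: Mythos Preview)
Your proposal is correct and follows essentially the same telescoping-series strategy as the paper: convert \eqref{assume-interpolation} into the Young-type inequality $(\star)$, pick a density point of $F$, build a geometric sequence of times shrinking to it, average over $F$-slices, and choose the $\varepsilon_k$ so that the resulting two-step recursion telescopes. Two small discrepancies are worth noting. First, for the general $F$ the paper does not construct the sequence by hand but invokes \cite[Proposition~2.1]{PW}, which produces $l_{m+1}-l=\lambda^m(l_1-l)$ together with the measure bound $|F\cap(l_{m+1},l_m)|\ge\tfrac13(l_m-l_{m+1})$; the paper then takes $\varepsilon=\exp\bigl[-\tfrac{(\mu-1)C'}{l_m-l_{m+2}}\bigr]$ with $\mu=(2-\lambda^{-2})^{-1}$, which forces $\lambda>1/\sqrt{2}$ so that the exponents match exactly and the sum collapses. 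Second, for $F=(0,T)$ the paper does \emph{not} use your dyadic choice $\ell_k=T\,2^{-k}$ (that corresponds to $\lambda=1/2<1/\sqrt{2}$ and would not fit the above $\mu$-mechanism); instead it takes $l=T/3$, $l_1=2T/3$, $\lambda=\sqrt{2/3}$, giving $l_1-l_3=T/9$, $\mu=2$, $C'\le 2+6C_{Hold}$, from which the constant $\exp[36(1+3C_{Hold})(1+1/T)]$ drops out directly. Your sketch is right in spirit, but to actually land on the stated numerical constant you should adopt the paper's parameter choices rather than the dyadic partition.
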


\begin{proof}
Suppose that $E\subset \mathbb{R}^n$ satisfies (\ref{assume-interpolation}).
Arbitrarily fix $T>0$ and $F\subset (0,T)$ of positive measure.
Applying Cauchy's inequality to \eqref{assume-interpolation}, we find that for all $t>0$ and $\varepsilon>0$,
\begin{align}\label{prod-1}
\int_{\mathbb{R}^n}|u(t,x)|^2\,\mathrm dx &\leq \frac{1}{\varepsilon}e^{2C_{Hold}\left(1+\frac{1}{t}\right)} \int_{E}|u(t,x)|^2\,\mathrm dx+\varepsilon \int_{\mathbb{R}^n}|u_0(x)|^2\,\mathrm dx.
\end{align}
By a translation in time, we find from (\ref{prod-1}) that  for all $0<t_1<t_2$ and $\varepsilon>0$,
\begin{align}\label{prod-2}
\int_{\mathbb{R}^n}|u(t_2,x)|^2\,\mathrm dx \leq \frac{1}{\varepsilon}e^{2C_{Hold}\left(1+\frac{1}{t_2-t_1}\right)} \int_{E}|u(t_2,x)|^2\,\mathrm dx+\varepsilon \int_{\mathbb{R}^n}|u(t_1,x)|^2\,\mathrm dx.
\end{align}

Let $l$ be a Lebesgue density point of $F$. Then according to \cite[Proposition 2.1]{PW},
for each $\lambda\in(0,1)$,  there is a sequence $\{l_m\}_{l=1}^\infty\subset (l,T)$ so that for each $m \in \mathbb{N}^+$,
\begin{align}\label{prod-3}
l_{m+1}-l=\lambda^m(l_1-l)
\end{align}
and
\begin{align}\label{prod-4}
\Big|F\bigcap (l_{m+1},l_m)\Big|\geq \frac{1}{3}(l_m-l_{m+1}).
\end{align}
Arbitrarily fix  $m\in\mathbb{N}^+$. Take $s$  so that
$$
0<l_{m+2}<l_{m+1}\leq s<l_m<T.
 $$
 Using \eqref{prod-2} (with $t_1=l_{m+2}$ and $t_2=s$) and noting that
$$
\int_{\mathbb{R}^n}\left|u\left(l_m,x\right)\right|^2\,\mathrm dx \leq \int_{\mathbb{R}^n}|u(s,x)|^2\,\mathrm dx
\;\;\mbox{and}\;\;l_{m+1}-l_{m+2}\leq s-l_{m+2},
$$
 we see that
\begin{align}\label{prod-5}
\int_{\mathbb{R}^n}\left|u\left(l_m,x\right)\right|^2\,\mathrm dx \leq \frac{1}{\varepsilon}e^{2C_{Hold}\left(1+\frac{1}{l_{m+1}-l_{m+2}}\right)} \int_{E}|u(s,x)|^2\,\mathrm dx+\varepsilon \int_{\mathbb{R}^n}\left|u\left(l_{m+2},x\right)\right|^2\,\mathrm dx.
\end{align}
Integrating with $s$ over $F\bigcap (l_{m+1},l_m)$ in \eqref{prod-5} implies that
\begin{multline}\label{prod-6}
\int_{\mathbb{R}^n}\left|u\left(l_m,x\right)\right|^2\,\mathrm dx \leq \varepsilon \int_{\mathbb{R}^n}\left|u\left(l_{m+2},x\right)\right|^2\,\mathrm dx \\
+\frac{1}{\varepsilon} \frac{1}{\left|F\bigcap (l_{m+1},l_m)\right|}e^{2C_{Hold}\left(1+\frac{1}{l_{m+1}-l_{m+2}}\right)}\int_{F\bigcap (l_{m+1},l_m)} \int_{E}|u(s,x)|^2\,\mathrm dx \mathrm ds.
\end{multline}
Since it follows by \eqref{prod-4} that
$$
\left|F\bigcap \left(l_{m+1},l_m\right)\right|\geq \frac{1}{3}\left(l_m-l_{m+1}\right)\geq \frac{1}{3}e^{-\frac{1}{l_m-l_{m+1}}},
$$
we obtain from  \eqref{prod-6} that
\begin{eqnarray}\label{prod-7}
\int_{\mathbb{R}^n}\left|u\left(l_m,x\right)\right|^2\,\mathrm dx &\leq& \varepsilon \int_{\mathbb{R}^n}\left|u\left(l_{m+2},x\right)\right|^2\,\mathrm dx \\
&+&\frac{3}{\varepsilon} e^{\frac{1}{l_m-l_{m+1}}+2C_{Hold}\left(1+\frac{1}{l_{m+1}-l_{m+2}}\right)}\int_{F\bigcap \left(l_{m+1},l_m\right)} \int_{E}|u(s,x)|^2\,\mathrm dx \mathrm ds.\nonumber
\end{eqnarray}
Meanwhile, it follows by \eqref{prod-3}  that
\begin{align}\label{prod-8}
l_m-l_{m+1}=\frac{1}{1+\lambda}\left(l_m-l_{m+2}\right)
\end{align}
and
\begin{align}\label{prod-9}
l_{m+1}-l_{m+2}=\frac{\lambda}{1+\lambda}\left(l_m-l_{m+2}\right).
\end{align}
Inserting  \eqref{prod-8} and \eqref{prod-9} into \eqref{prod-7}, we find that
\begin{eqnarray}\label{prod-10}
\int_{\mathbb{R}^n}\left|u\left(l_m,x\right)\right|^2\,\mathrm dx &\leq& \varepsilon \int_{\mathbb{R}^n}\left|u\left(l_{m+2},x\right)\right|^2\,\mathrm dx\nonumber\\
&+&3e^{2C_{Hold}}\frac{1}{\varepsilon} e^{\frac{C'}{l_m-l_{m+2}}}\int_{F\bigcap \left(l_{m+1},l_m\right)} \int_{E}|u(s,x)|^2\,\mathrm dx \mathrm ds
\end{eqnarray}
with
\begin{eqnarray}\label{wang2.44}
C'=1+\lambda+\frac{2C_{Hold}(1+\lambda)}{\lambda}.
\end{eqnarray}
 Rewrite \eqref{prod-10} as
\begin{eqnarray}\label{prod-11}
&&\varepsilon e^{-\frac{C'}{l_m-l_{m+2}}}\int_{\mathbb{R}^n}\left|u\left(l_m,x\right)\right|^2\,\mathrm dx - \varepsilon^2e^{-\frac{C'}{l_m-l_{m+2}}} \int_{\mathbb{R}^n}\left|u\left(l_{m+2},x\right)\right|^2\,\mathrm dx\nonumber\\
&\leq& 3e^{2C_{Hold}} \int_{F\bigcap \left(l_{m+1},l_m\right)} \int_{E}|u(s,x)|^2\,\mathrm dx \mathrm ds.
\end{eqnarray}

Next, we fix $\lambda \in (1/\sqrt{2},1)$. Let  $\mu:=\frac{1}{2-\lambda^{-2}}$.
 Then $\mu>1$.
  Setting, in \eqref{prod-11},
   $$
   \varepsilon=\exp\left[-\frac{(\mu-1)C'}{l_m-l_{m+2}}\right],
   $$
    we have that
\begin{eqnarray}\label{equ-inter-10-21-1}
&&e^{-\frac{\mu C'}{l_m-l_{m+2}}}\int_{\mathbb{R}^n}\left|u\left(l_m,x\right)\right|^2\,\mathrm dx - e^{-\frac{(2\mu-1)C'}{l_m-l_{m+2}}} \int_{\mathbb{R}^n}\left|u\left(l_{m+2},x\right)\right|^2\,\mathrm dx\nonumber\\
&\leq& 3e^{2C_{Hold}} \int_{F\bigcap \left(l_{m+1},l_m\right)} \int_{E}|u(s,x)|^2\,\mathrm dx \mathrm ds.
\end{eqnarray}
Meanwhile, one can easily check  that
\begin{equation}\label{equ-inter-10-21-2}
\exp\left[-\frac{(2\mu-1)C'}{l_m-l_{m+2}}\right] = \exp\left[-\frac{\mu C'}{\lambda^2(l_m-l_{m+2})}\right].
\end{equation}
 Because
 $$
 l_{m+2}-l_{m+4}=\lambda^2\left(l_m-l_{m+2}\right),
  $$
  we deduce from \eqref{equ-inter-10-21-1} and \eqref{equ-inter-10-21-2} that
\begin{eqnarray*}%\label{prod-12}
&&e^{-\frac{\mu C'}{l_m-l_{m+2}}}\int_{\mathbb{R}^n}\left|u\left(l_m,x\right)\right|^2\,\mathrm dx - e^{-\frac{\mu C'}{l_{m+2}-l_{m+4}}} \int_{\mathbb{R}^n}\left|u\left(l_{m+2},x\right)\right|^2\,\mathrm dx\\
&\leq& 3e^{2C_{Hold}} \int_{F\bigcap \left(l_{m+1},l_m\right)} \int_{E}|u(s,x)|^2\,\mathrm dx \mathrm ds.
\end{eqnarray*}
Summing  the above inequality for all odd $m$ derives that
\begin{align*}
e^{-\frac{\mu C'}{l_1-l_{3}}}\int_{\mathbb{R}^n}\left|u\left(l_1,x\right)\right|^2\,\mathrm dx
%&= \sum_{m=1}^\infty \left(e^{-\frac{\beta C_2}{l_m-l_{m+2}}}\int_{\mathbb{R}^n}|u(l_m)|^2\,\mathrm dx - e^{-\frac{\beta C_2}{l_{m+2}-l_{m+4}}}\int_{\mathbb{R}^n}|u(l_{m+2})|^2\,\mathrm dx\right)\\
&\leq 3e^{2C_{Hold}} \sum_{m=1}^\infty\int_{F\bigcap \left(l_{m+1},l_m\right)} \int_{E}|u(s,x)|^2\,\mathrm dx \mathrm ds\\
&\leq 3e^{2C_{Hold}} \int_{F\bigcap \left(l,l_1\right)} \int_{E}|u(s,x)|^2\,\mathrm dx \mathrm ds\nonumber\\
&\leq 3e^{2C_{Hold}} \int_{F} \int_{E}|u(s,x)|^2\,\mathrm dx \mathrm ds.
\end{align*}
Thus, we have that
\begin{align}\label{equ-obs-10-21}
\int_{\mathbb{R}^n}|u(T,x)|^2\,\mathrm dx\leq \int_{\mathbb{R}^n}|u(l_1,x)|^2\,\mathrm dx \leq 3e^{2C_{Hold}}e^{\frac{\mu C'}{l_1-l_{3}}} \int_{F} \int_{E}|u(s,x)|^2\,\mathrm dx \mathrm ds,
\end{align}
which leads to \eqref{duc2} with
$$
C_{obs}=3\exp\left[2C_{Hold}+{\frac{\mu C'}{l_1-l_{3}}}\right].
$$

Finally, in the case when  $F=(0,T)$, we set
$$
l_1=\frac{2T}{3},\;\; l=\frac{T}{3}\;\;\mbox{ and}\;\;\lambda=\sqrt{\frac{2}{3}}.
  $$
  Then we have that (see (\ref{wang2.44}))
  $$
 l_1-l_3=\frac{T}{9},\; \mu=2\;\;\mbox{ and}\;\;C'\leq 2+6C_{Hold}.
    $$
    Now, we derive from \eqref{equ-obs-10-21} that
\begin{align*}
\int_{\mathbb{R}^n}|u(T,x)|^2\,\mathrm dx &\leq 3e^{2C_{Hold}}e^{\frac{36\left(1+3C_{Hold}\right)}{T}} \int_{0}^T \int_{E}|u(s,x)|^2\,\mathrm dx \mathrm ds\\
&\leq  e^{36\left(1+3C_{Hold}\right)\left(1+\frac{1}{T}\right)} \int_{0}^T \int_{E}|u(s,x)|^2\,\mathrm dx \mathrm ds.
\end{align*}
This completes the proof of Lemma \ref{lem-obs}.
\end{proof}

  The  next lemma seems to be new to our best knowledge. The key of its proof is  the structure of a special solution to the equation (\ref{heat}). This structure is based on  the heat kernel.
\begin{lemma}\label{lem-thick-E}
Suppose that a measurable set  $E\subset\mathbb{R}^n$
satisfies the observability inequality (\ref{ob-q}).
Then the set $E$ is
 $\gamma$-thick at scale $L$ for some $\gamma>0$ and $L>0$.
\end{lemma}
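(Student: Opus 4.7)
I would argue by contraposition: assume $E$ is not $\gamma$-thick at scale $L$ for any choice of $\gamma,L>0$, and construct a family of solutions to (\ref{heat}) that violates (\ref{ob-q}) at $T=1$. The negation of thickness means that for every $L>0$ and every $\gamma>0$ there exists $x\in\mathbb R^n$ with $|E\cap(x+LQ)|<\gamma L^n$. The ``special solutions'' promised in the statement will be translated heat kernels: for any $x_0\in\mathbb R^n$, take $u_0(x)=G_1(x-x_0)$ with $G_s(y)=(4\pi s)^{-n/2}e^{-|y|^2/(4s)}$; by the semigroup property, the corresponding solution is $u(t,x)=G_{1+t}(x-x_0)$, and its $L^2$-norm at time $T$,
\[
\|u(T,\cdot)\|_{L^2(\mathbb R^n)}^2=G_{2(1+T)}(0)=(8\pi(1+T))^{-n/2}=:c_T,
\]
is a positive constant independent of $x_0$.

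Next I would estimate the observation integral. The Gaussian $u(t,\cdot)=G_{1+t}(\cdot-x_0)$ is sharply concentrated near $x_0$ uniformly for $t\in[0,T]$. Given $\varepsilon>0$, a standard Gaussian tail estimate yields some $L=L(\varepsilon,T)$ with
\[
\int_{|y|\geq L/2}G_{1+t}(y)^2\,\mathrm dy<\varepsilon\quad\text{for all }t\in[0,T];
\]
since $B_{L/2}(0)\subset LQ$, this controls the mass outside the cube $x_0+LQ$. Fixing this $L$, the non-thickness hypothesis supplies, for each $k\in\mathbb N$, a point $x_k\in\mathbb R^n$ with $|E\cap(x_k+LQ)|\leq L^n/k$. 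With $u_k(t,x)=G_{1+t}(x-x_k)$ and the splitting $E=(E\cap(x_k+LQ))\cup(E\setminus(x_k+LQ))$, bounding the first piece by $\|G_{1+t}\|_\infty^2\leq(4\pi)^{-n}$ times $|E\cap(x_k+LQ)|$ and the second by the tail estimate gives
\[
\int_E|u_k(t,x)|^2\,\mathrm dx\leq(4\pi)^{-n}\frac{L^n}{k}+\varepsilon\qquad(t\in[0,T]),
\]
hence $\iint_{(0,T)\times E}|u_k|^2\leq T\varepsilon+(4\pi)^{-n}TL^n/k$. If (\ref{ob-q}) held, we would get $c_T\leq C_{obs}(T\varepsilon+(4\pi)^{-n}TL^n/k)$ for every $k$; letting $k\to\infty$ and then $\varepsilon\to 0$ would force $c_T\leq 0$, contradicting $c_T>0$.

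\textbf{Expected obstacle.} The only delicate point is the two-scale coordination: $L$ must be chosen large first so that the Gaussian mass outside $x_k+LQ$ is uniformly small on $[0,T]$, and \emph{then} non-thickness is applied at this fixed scale $L$ with $\gamma=1/k\to 0$. Alternatively one can let both scales move together, e.g.\ $L_k=k^{1/(2n)}$, so that $L_k\to\infty$ (killing the tail) while $L_k^n/k\to 0$ (killing the inside contribution). Either way, the essential feature exploited is that heat kernels on $\mathbb R^n$ are well-localized at each fixed positive time, so a ``hole'' in $E$ of any prescribed scale---which the failure of thickness produces---can be aligned with the centre of a suitable Gaussian solution to defeat the observability estimate. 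No sophisticated machinery is needed beyond the heat semigroup and standard Gaussian tail bounds.
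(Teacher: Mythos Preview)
Your proposal is correct and uses essentially the same ingredients as the paper: translated Gaussians $u(t,x)=G_{1+t}(x-x_0)$ as test solutions, a near/far decomposition of the observation integral around $x_0$, the pointwise bound $\|G_{1+t}\|_\infty\le(4\pi)^{-n/2}$ on the near part, and a uniform Gaussian tail estimate on the far part. The only difference is logical packaging: the paper argues directly---fixing an arbitrary $x_0$, applying (\ref{ob-q}) at $T=1$, and solving for a lower bound $|E\cap B_L(x_0)|\ge(2C_{obs})^{-1}\pi^{n/2}$ valid for all $x_0$---whereas you run the same estimates in contrapositive form with the two-limit $k\to\infty$ then $\varepsilon\to0$; the direct version is marginally cleaner and yields the thickness constants explicitly in terms of $C_{obs}(n,1,E)$, but the mathematical content is identical.
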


\begin{proof}
Let   $E\subset\mathbb{R}^n$ be a measurable set
satisfying the observability inequality (\ref{ob-q}). Recall that the heat kernel is as:
$$
K(t,x)=(4\pi t)^{-n/2}e^{-|x|^2/4t},\;\; t>0, x\in\mathbb R^n.
$$
Given $u_0\in L^2(\mathbb R^n)$, the function defined by
\begin{eqnarray}\label{2.49Wang}
(t,x) \longrightarrow\int_{\mathbb{R}^n}K(t,x-y)u_0(y)\,\mathrm dy,\;\; (t,x)\in(0,\infty)\times\mathbb R^n,
\end{eqnarray}
is a solution to the equation \eqref{heat} with the initial condition $u(0,x)=u_0(x)$, $x\in \mathbb{R}^n$.

Arbitrarily fix $x_0\in \mathbb{R}^n$. By taking
$$
u_0(x)=(4\pi )^{-n/2}e^{-|x-x_0|^2/4},\;\;x\in\mathbb R^n,
$$
in (\ref{2.49Wang}), we get the following solution to the equation \eqref{heat}:
\begin{align}\label{special-solution}
v(t,x)=(4\pi (t+1))^{-\frac{n}{2}}e^{-\frac{|x-x_0|^2}{4(t+1)}}, \quad t\geq0, \;x\in\mathbb R^n.
\end{align}
From (\ref{special-solution}), we obtain by  direct computations that
\begin{align}\label{equ-ne-1}
\int_{\mathbb{R}^n}|v(1,x)|^2\,\mathrm dx=4^{-n}\pi^{-\frac{n}{2}}.
\end{align}
From (\ref{special-solution}), we also find that  for an arbitrarily fixed  $L>0$,
\begin{align}\label{equ-ne-2}
v(t,x)\leq(4\pi)^{-\frac{n}{2}}e^{-\frac{L^2}{16}}e^{-\frac{|x-x_0|^2}{16}}\;\; \text{for all}\;\;0\leq t\leq 1\;\,\text{and}\; x\in\mathbb R^n,\;\text{with}\;\;|x-x_0|\geq L.
\end{align}
By (\ref{equ-ne-2}), the above solution $v$ satisfies that
\begin{align}\label{equ-ne-3}
\int_0^1\int_{|x-x_0|\geq L}|v(t,x)|^2\,\mathrm dx\,\mathrm dt\leq (2\pi)^{-\frac{n}{2}}e^{-\frac{L^2}{8}}.
\end{align}
Meanwhile, by taking $T=1$ and $u=v$  in the observability inequality (\ref{ob-q}), we see that
\begin{align}\label{equ-ne-4}
\int_{\mathbb{R}^n}|v(1,x)|^2\,\mathrm dx&\leq C\int_0^1\int_{E}|v(t,x)|^2\,\mathrm dx\,\mathrm dt.
%&\leq e^{2C}\int_0^1\int_{E\bigcap B_L(0)}|u(t,x)|^2\,\mathrm dx\,\mathrm dt+e^{2C}\int_0^1\int_{|x|\geq L}|u(t,x)|^2\,\mathrm dx\,\mathrm dt.
\end{align}
Here and in what follows, $C$ stands for the constant $C_{obs}(n,1,E)$ in (\ref{ob-q}).

Now, it follows from \eqref{equ-ne-1}, \eqref{equ-ne-4} and \eqref{equ-ne-3} that
\begin{align}\label{equ-ne-5}
4^{-n}\pi^{-\frac{n}{2}}\leq C\int_0^1\int_{E\bigcap B_L(x_0)}|v(t,x)|^2\,\mathrm dx\,\mathrm dt + C(2\pi)^{-\frac{n}{2}}e^{-\frac{L^2}{8}}.
\end{align}
Choose $L>0$ in such a way that
$$
C(2\pi)^{-\frac{n}{2}}e^{-\frac{L^2}{8}} \leq \frac{1}{2} 4^{-n}\pi^{-\frac{n}{2}}.
$$
Then by \eqref{equ-ne-5} and (\ref{special-solution}), we obtain that
\begin{align*}
\frac{1}{2}4^{-n}\pi^{-\frac{n}{2}}&\leq C\int_0^1\int_{E\bigcap B_L(x_0)}|v(t,x)|^2\,\mathrm dx\,\mathrm dt\nonumber\\
&= C\int_0^1\int_{E\bigcap B_L(x_0)}(4\pi (t+1))^{-n}e^{-\frac{|x-x_0|^2}{2(t+1)}}\,\mathrm dx\,\mathrm dt \nonumber\\
& \leq C\int_0^1\int_{E\bigcap B_L(x_0)}(4\pi)^{-n}\,\mathrm dx\,\mathrm dt  \leq C(4\pi)^{-n}
|E\bigcap B_L(x_0)|,
\end{align*}
from which, it follows that
\begin{eqnarray}\label{2.56Wang}
|E\bigcap B_L(x_0)|\geq (2C)^{-1}\pi^{\frac{n}{2}}.
\end{eqnarray}
Since $ B_L(x_0)\subset (x_0+2LQ)$, we see from (\ref{2.56Wang}) that
\begin{align*}
\left|E\bigcap (x_0+2LQ)\right|\geq(2C)^{-1}\pi^{\frac{n}{2}}.
\end{align*}
From this, as well as the choice of $L$,  we can find $L'>0$ and $\gamma>0$, which are independent of the choice of $x_0$, so that
\begin{align}\label{equ-ne-8}
\left|E\bigcap (x_0+L'Q)\right|\geq \gamma (L')^n.
\end{align}
Notice that $x_0$ in (\ref{equ-ne-8}) was arbitrarily taken from $\mathbb{R}^n$. Hence,
the set $E$ is $\gamma$-thick at scale $L'$. This ends the proof of
Lemma \ref{lem-thick-E}.
\end{proof}
\medskip
 We now on the position to prove Theorem \ref{equi-thm}.
 \begin{proof}[\textbf{Proof of Theorem \ref{equi-thm}}]
 We  can prove it in the following way:
$$
{\bf (i) \Rightarrow (ii) \Rightarrow (iii) \Rightarrow (iv) \Rightarrow (i).}
 $$
Indeed, the conclusions  {\bf (i) $\Rightarrow$ (ii)}, {\bf (ii) $\Rightarrow$ (iii)}, {\bf (iii) $\Rightarrow$ (iv)} and {\bf (iv) $\Rightarrow$ (i)} follow respectively from  Lemma \ref{lem-spec}, Lemma \ref{lem-hold}, Lemma \ref{lem-obs} and Lemma \ref{lem-thick-E}. This ends the proof of Theorem \ref{equi-thm}.
\end{proof}

\medskip

Tracking the constants in Lemma \ref{lem-spec}, Lemma \ref{lem-hold} and Lemma \ref{lem-obs}, we can easily get the following consequences of Theorem \ref{equi-thm}:
\begin{corollary}
Let $E\subset \mathbb{R}^n$ be a set of $\gamma$-thick at scale $L$ for some $\gamma>0$ and $L>0$.  Then  the following conclusions are true for a constant $C=C(n)>0$:
\begin{description}
                 \item[(a)] The set $E$ satisfies the H\"older-type interpolation \eqref{interpolation} with
$$
C_{Hold}(n,E,\theta)=\frac{C}{1-\theta}(1+L)^2\Big(1+\ln \frac{1}{\gamma}\Big)^2.
$$
                 \item[(b)] The set $E$ satisfies the observability inequality \eqref{ob-q} with
$$
C_{obs}(n,E,T)=e^{300(1+C)(1+L)^2\left(1+\ln\frac{1}{\gamma}\right)^2\left(1+\frac{1}{T}\right)}.
$$
\end{description}
\end{corollary}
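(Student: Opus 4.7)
The plan is to chain together the explicit constants from Lemma~\ref{lem-spec}, Lemma~\ref{lem-hold}, and Lemma~\ref{lem-obs}, since each lemma makes the dependence of its output constant on the input constant quantitative. First, since $E$ is $\gamma$-thick at scale $L$, Lemma~\ref{lem-spec} gives the spectral inequality \eqref{intro-spec} with
\begin{equation*}
C_{spec}(n,E) \leq C_1(n)(1+L)\Big(1+\ln\tfrac{1}{\gamma}\Big)
\end{equation*}
for some dimensional constant $C_1=C_1(n)$. I will simply record this $C_{spec}$ as the base quantity to propagate.

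Second, I apply Lemma~\ref{lem-hold} to deduce the H\"{o}lder-type interpolation inequality \eqref{interpolation} with
\begin{equation*}
C_{Hold}(n,E,\theta) \leq \frac{1}{1-\theta}(C_{spec}+1)^2 + \ln 12.
\end{equation*}
Plugging in the bound for $C_{spec}$ and using the elementary estimate $(a+1)^2 \leq 2(a^2+1)$ together with $(1+L)^2(1+\ln(1/\gamma))^2 \geq 1$, I can absorb $\ln 12$ and the lower order terms into a single prefactor and obtain
\begin{equation*}
C_{Hold}(n,E,\theta) \leq \frac{C(n)}{1-\theta}(1+L)^2\Big(1+\ln\tfrac{1}{\gamma}\Big)^2,
\end{equation*}
which is exactly conclusion~(a).

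Third, to derive conclusion~(b), I specialize the interpolation inequality to $\theta=1/2$ (so that the hypothesis of Lemma~\ref{lem-obs} holds, matching the exponents $1/2$ and $1/2$ in \eqref{assume-interpolation}) and take $F=(0,T)$. Lemma~\ref{lem-obs} then provides
\begin{equation*}
C_{obs}(n,T,E) \leq \exp\!\big[36(1+3C_{Hold}(n,E,\tfrac12))(1+\tfrac{1}{T})\big].
\end{equation*}
Substituting the bound for $C_{Hold}(n,E,1/2) \leq 2C(n)(1+L)^2(1+\ln(1/\gamma))^2$ from step~(a), absorbing the constants $36$, $3$, and $2$ into a single factor, and using the trivial monotonicity $(1+L)^2(1+\ln(1/\gamma))^2\geq 1$ to collect all the numerical constants under one exponent, I get
\begin{equation*}
C_{obs}(n,E,T) \leq \exp\!\big[300(1+C(n))(1+L)^2(1+\ln\tfrac{1}{\gamma})^2(1+\tfrac{1}{T})\big],
\end{equation*}
as claimed.

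There is no real obstacle here: the three lemmas are already quantitative, and the corollary is almost a bookkeeping exercise. The only mild care required is to choose the intermediate $\theta$ (I take $\theta=1/2$ so that the hypothesis of Lemma~\ref{lem-obs} applies directly without further modification) and to absorb the additive constants ($\ln 12$, $1$, $36$, etc.) into the common multiplicative constant $C(n)$ by exploiting that $(1+L)^2(1+\ln(1/\gamma))^2\geq 1$. All else is straightforward arithmetic consolidation of the bounds proved in the preceding three lemmas.
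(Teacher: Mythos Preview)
Your proposal is correct and matches the paper's own approach exactly: the paper simply states that the corollary follows by ``tracking the constants in Lemma~\ref{lem-spec}, Lemma~\ref{lem-hold} and Lemma~\ref{lem-obs}'', and you have carried out precisely that bookkeeping, including the natural choice $\theta=1/2$ to feed into Lemma~\ref{lem-obs} and the use of $(1+L)^2(1+\ln(1/\gamma))^2\geq 1$ to absorb additive constants.
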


\section{Weak interpolation and observability inequalities}\label{nons}
In this section, we  introduce several weak observability inequalities and interpolation
inequalities, where observations are made over a ball in $\mathbb{R}^n$.
One one hand, these inequalities
can be viewed as  extensions of (\ref{ob-q}) and (\ref{interpolation}) in some senses,
while on the other hand, they are independently interesting.

\subsection{Weak interpolation inequalities with observation on the unit ball}\label{07bao2}

 We begin with introducing two spaces. Given $a>0$ and $\nu>0$, we set
 $$
 L^2(e^{a|x|^\nu}\,\mathrm dx):=\{f: \mathbb{R}^n\rightarrow \mathbb{R}\;:\;
 f\;\mbox{is measurable and}\; \|f\|_{L^2(e^{a|x|^\nu}\,\mathrm dx)}<+\infty\},
 $$
 equipped with the norm:
  $$
\|f\|_{L^2(e^{a|x|^\nu}\,\mathrm dx)} :=  \left(\int_{\mathbb{R}^n}|f(x)|^2e^{a|x|^\nu}\,\mathrm dx\right)^{1/2},\;\;f\in L^2(e^{a|x|^\nu}\,\mathrm dx).
$$
 Given $\nu>0$, we set
 $$
 L^2(\langle x\rangle^\nu \,\mathrm dx):=
 \{f: \mathbb{R}^n\rightarrow \mathbb{R}\;:\;
 f\;\mbox{is measurable and}\; \|f\|_{L^2(\langle x\rangle^\nu \,\mathrm dx)}<+\infty\},
  $$
  equipped with the norm:
 $$
\|f\|_{L^2(\langle x\rangle^\nu \,\mathrm dx)} :=  \left(\int_{\mathbb{R}^n}|f(x)|^2\langle x\rangle^\nu \,\mathrm dx\right)^{1/2},\;\; f\in L^2(\langle x\rangle^\nu \,\mathrm dx).
$$
 Notice that any function in one of the above spaces decays along the radical direction.

  In this subsection,  we will build up some interpolation inequalities for
    solutions to (\ref{heat}),
 with initial data in $L^2(e^{a|x|^\nu}\,\mathrm dx)$ (or
 $L^2(\langle x\rangle^\nu \,\mathrm dx)$). In these inequalities, observations are made over the unit ball in $\mathbb{R}^n$ and at one time point.
     The purpose to study such observability has been explained in Subsection 1.3.
 Our main results about this subject are included in the following theorem:

\begin{theorem}\label{app-obs}
\begin{description}

  \item[(i)] There is $\theta=\theta(n)\in(0,1)$ and $C'=C'(n)$ such that
     for any $\varepsilon>0$, $T>0$ and $a>0$,
  $$
\int_{\mathbb{R}^n}|u(T,x)|^2\,\mathrm dx \leq C_1(a,T)\Big(\varepsilon\int_{\mathbb{R}^n}|u_0(x)|^2e^{a|x|}\,\mathrm dx  + \varepsilon^{-1}e^{\varepsilon^{-\frac{4|\ln \theta|}{a}}} \int_{B_1}|u(T,x)|^2\,\mathrm dx \Big),
$$
when $u$ solves \eqref{heat} with the initial condition
   $u(0,\cdot)=u_0(\cdot)\in L^2(e^{a|x|}\,\mathrm dx)$. Here,
   $$
C_1(a,T)=e^{C'\left(1+\frac{1}{T}+a +a^2T\right)}\sqrt{\Big(1+a^{-n}\Gamma\Big(\frac{a}{2|\ln \theta|}\Big)\Big)}.
$$

\item[(ii)]  There is $\theta=\theta(n)\in (0,1)$ and $C''=C''(n)$ so that
for any $\varepsilon\in(0,1)$,
$T>0$ and  $\nu\in(0,1]$,
  \begin{align*}
\int_{\mathbb{R}^n}|u(T,x)|^2\,\mathrm dx\leq  C_2(\nu,T)\Big(\varepsilon \int_{\mathbb{R}^n}|u_0|^2\langle x\rangle^\nu \,\mathrm dx + e^{e^{\left(3|\ln \theta|+1\right)\left(\frac{1}{\varepsilon}\right)^{\frac{1}{\nu}}}}
\int_{B_1}|u(T,x)|^2\,\mathrm dx\Big),
\end{align*}
when  $u$ solves \eqref{heat} with the initial condition
   $u(0,\cdot)=u_0(\cdot)\in L^2(\langle x\rangle^\nu \,\mathrm dx)$.
   Here,
 $$
 C_2(\nu,T)=(1+T^{\frac{\nu}{2}})e^{C''\left(1+\frac{1}{T}\right)}.
 $$

\end{description}
\end{theorem}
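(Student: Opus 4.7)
The main obstacle is that $B_1$ fails to be $\gamma$-thick at any scale, so Theorem~\ref{equi-thm} cannot be applied directly. The plan is to introduce an $\varepsilon$-dependent lattice thick set built from translates of $B_1$, apply the H\"older interpolation of Theorem~\ref{equi-thm} on it, and then leverage the decay of $u_0$ to control the contributions from the translated balls $B_1(Lk)$ with $k\neq 0$.

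The first step is to establish a pointwise decay of $u(T,\cdot)$ inherited from $u_0$. Applying Cauchy--Schwarz with weight $e^{-a|y|}$ to the heat-kernel representation $u(T,x)=\int_{\mathbb R^n} K(T,x-y)u_0(y)\,\mathrm dy$ gives
$$
|u(T,x)|^2 \leq \Bigl(\sup_y K(T,x-y)e^{-a|y|}\Bigr)\int_{\mathbb R^n}|u_0(y)|^2 e^{a|y|}\,\mathrm dy,
$$
and optimizing $-|x-y|^2/(4T)-a|y|$ in $y$ yields $\sup_y K(T,x-y)e^{-a|y|}\leq (4\pi T)^{-n/2}e^{a^2T-a|x|}$ when $|x|\geq 2aT$, with the uniform bound $(4\pi T)^{-n/2}e^{a^2T}$ otherwise. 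For Part~(ii), the analogous optimization with the polynomial weight $\langle y\rangle^{-\nu}$ gives a decay of order $(1+T^{\nu/2})\langle x\rangle^{-\nu}$ for $|x|$ large.

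Next, for a parameter $L\geq 2$ to be chosen, the lattice $E_L=\bigcup_{k\in\mathbb Z^n}B_1(Lk)$ is $V_nL^{-n}$-thick at scale $L$, so Theorem~\ref{equi-thm} together with the explicit constants tracked in Lemmas~\ref{lem-spec}--\ref{lem-hold} yields
$$
\|u(T)\|^2_{L^2(\mathbb R^n)}\leq e^{C_H(1+1/T)}\Bigl(\int_{E_L}|u(T,x)|^2\,\mathrm dx\Bigr)^{\theta}\|u(0)\|_{L^2}^{2(1-\theta)},
$$
with $C_H\leq C(n)(1+L)^2(1+\ln L)^2/(1-\theta)$ for any $\theta\in(0,1)$, which I would fix as the stated $\theta=\theta(n)$. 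Splitting $\int_{E_L}|u(T)|^2=\int_{B_1}|u(T)|^2+\sum_{k\neq 0}\int_{B_1(Lk)}|u(T)|^2$, the pointwise bound from the first step controls the tail by $C(n,a,T)e^{-aL}\|u_0\|^2_{L^2(e^{a|\cdot|})}$; the finitely many indices with $|Lk|<2aT$ on which only the uniform bound applies contribute an additional polynomial factor, and this is precisely where the $\Gamma$-factor in $C_1(a,T)$ will arise. I would then apply the elementary inequality $(A+B)^{\theta}\leq A^{\theta}+B^{\theta}$ together with the AM--GM inequality $A^{\theta}B^{1-\theta}\leq \theta\mu A + (1-\theta)\mu^{-\theta/(1-\theta)}B$ to separate the $\int_{B_1}|u(T)|^2$ and $\|u_0\|^2$ terms.

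Finally, choosing $L\sim \ln(1/\varepsilon)/a$ for Part~(i) (so that the tail is $\sim \varepsilon \|u_0\|_{L^2(e^{a|\cdot|})}^2$) and tuning $\mu=\mu(\varepsilon)$ so that the $\|u_0\|^2$ coefficient from AM--GM also equals $\varepsilon$ produces the stated inequality, with the form $\varepsilon^{-1}e^{\varepsilon^{-4|\ln\theta|/a}}$ in front of $\int_{B_1}|u(T)|^2$ emerging from the factor $e^{C_H}\sim e^{L^2}$. Part~(ii) will follow the same scheme with $L\sim \varepsilon^{-1/\nu}$ (dictated by the slower polynomial decay), producing the double-exponential $e^{e^{c\varepsilon^{-1/\nu}}}$ claimed there. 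The main technical difficulty will lie in tracking the precise constants through these balancing steps so that the functional forms of $C_1(a,T)$, $C_2(\nu,T)$, and the coefficients of $\int_{B_1}|u(T)|^2$ in front of the observation term match the statement exactly; in particular, verifying that the uniform-bound contributions from the balls $B_1(Lk)$ with $|Lk|<2aT$ assemble into the stated $\sqrt{1+a^{-n}\Gamma(a/(2|\ln\theta|))}$ factor will require a careful Gamma-function identification.
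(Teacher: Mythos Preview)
Your strategy has a genuine gap at the balancing step. When you apply the H\"older interpolation of Theorem~\ref{equi-thm} to the lattice set $E_L$, the constant $C_H$ you inherit is, by the corollary following Theorem~\ref{equi-thm}, of size $C(n)(1+L)^2(1+\ln(1/\gamma))^2/(1-\theta)$ with $\gamma\sim L^{-n}$; in particular $e^{C_H}\gtrsim e^{cL^2}$. After splitting $\int_{E_L}|u(T)|^2$ and bounding the tail, the term that should become $\varepsilon\|u_0\|^2_{L^2(e^{a|\cdot|})}$ is of order $e^{C_H(1+1/T)}e^{-c\theta aL}\|u_0\|^2$. But $cL^2$ in the exponent dominates $c\theta aL$ for large $L$, so this product is bounded \emph{below} by a positive constant depending only on $n,a,T,\theta$ and can never be made $\leq\varepsilon$ for small $\varepsilon$. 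Your proposed choice $L\sim a^{-1}\ln(1/\varepsilon)$ makes the \emph{raw} tail $\sim\varepsilon$, but once multiplied by $e^{C_H}\sim e^{c(\ln(1/\varepsilon))^2/a^2}$ it blows up again. No choice of $L$ or $\theta$ rescues this, because the quadratic growth of $C_H$ in $L$ is intrinsic to the thick-set machinery. (For Part~(ii) the situation is worse: with $\nu\in(0,1]$ your pointwise bound gives a tail $\sum_{k\neq0}(L|k|)^{-\nu}$, which diverges.)

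The paper avoids this obstruction by \emph{not} using Theorem~\ref{equi-thm} at all. It exploits instead that $u(T,\cdot)$ is real-analytic with uniform radius (because $\widehat{u(T)}\in L^2(e^{2T|\xi|^2}\,\mathrm d\xi)$), and propagates smallness annulus-by-annulus via the Vessella-type Lemma~\ref{propagation}: one obtains $\int_{\Omega_{j+1}}|u(T)|^2\leq j^{n-1}e^{C(1+1/T)}\bigl(\int_{B_1}|u(T)|^2\bigr)^{\theta^j}\bigl(\int|\widehat{u(T)}|^2e^{2T|\xi|^2}\bigr)^{1-\theta^j}$ with a prefactor that does \emph{not} grow with $j$; only the H\"older exponent degrades geometrically as $\theta^j$. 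Summing against $e^{-aj}$ and applying the series estimate $\sum_k b^{\theta^k}e^{-ak}\leq C|\ln b|^{-a/|\ln\theta|}$ of Lemma~\ref{lem-series-sum} then yields Proposition~\ref{thm-ua}, from which both parts follow (Part~(i) via Cauchy--Schwarz with weight $e^{\pm a|x|}$ and the persistence Lemma~\ref{lem-persis-1}; Part~(ii) via the elementary inequality $1\leq\varepsilon|x|^\nu+e^{(1/\varepsilon)^{1/\nu}}e^{-|x|}$). The $\Gamma$-factor in $C_1(a,T)$ comes from this series lemma, not from any near-origin contributions. The essential idea you are missing is that analyticity gives propagation with a \emph{scale-independent} constant, whereas the thick-set interpolation cannot.
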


\medskip

\begin{remark}
$(a)$ The condition that $\nu\leq 1$ in (ii) of Theorem \ref{app-obs} is not necessary. We make this assumption only for the brevity of the statement of the theorem.
Indeed, from the definition of $L^2(\langle x\rangle^\nu \,\mathrm dx)$, we see that  $L^2(\langle x\rangle^\nu \,\mathrm dx) \hookrightarrow L^2(\langle x\rangle \,\mathrm dx)$ for any $\nu \geq 1$. From this and (ii) of Theorem \ref{app-obs}, one can easily check that
when $\nu>1$,
 any solution of \eqref{heat} satisfies that
\begin{align*}
\int_{\mathbb{R}^n}|u(T,x)|^2\,\mathrm dx\leq \left(1+T^{\frac{1}{2}}\right)e^{C''(n)\left(1+\frac{1}{T}\right)}\Big(\varepsilon \int_{\mathbb{R}^n}|u_0|^2\langle x\rangle^\nu \,\mathrm dx + e^{e^{\left(3|\ln \theta|+1\right)\frac{1}{\varepsilon}}}\int_{B_1}|u(T,x)|^2\,\mathrm dx\Big).
\end{align*}

$(b)$  \cite[Theorem 1]{EKPV-heat}
contains the following result: There is a universal constant $C>0$ so that for each $T>0$ and $R>0$,
\begin{eqnarray}\label{EKPV-uncertainty}
 \sup_{0\leq t\leq T} \Big\|e^{\frac{t|x|^2}{4\left(t^2+R^2\right)}} u(t) \Big\|_{L^2(\mathbb R^n)}
 \leq C \Big( \|u(0) \|_{L^2(\mathbb R^n)} +  \Big\|e^{\frac{T|x|^2}{4\left(T^2+R^2\right)}} u(T) \Big\|_{L^2(\mathbb R^n)} \Big),
\end{eqnarray}
when $u$ solves (\ref{heat}). The first inequality in Theorem \ref{app-obs} is comparable to
the above inequality (\ref{EKPV-uncertainty}). By our understanding, these two inequalities can be viewed as different versions of Hardy uncertainty principle. On one hand, the inequality (\ref{EKPV-uncertainty}) can be understood
as follows: From some information on a solution to (\ref{heat}) at infinity in $\mathbb{R}^n$  at two time points $0$ and $T$, one can know the behaviour of this solution at  infinity in $\mathbb{R}^n$ at each time  $t\in[0,T]$. On the other hand,  the first inequality in Theorem \ref{app-obs} can be explained in the following way: From some information on a solution to (\ref{heat}) at  infinity  in $\mathbb{R}^n$ at time $0$, and in the ball
$B_1$ in $\mathbb{R}^n$ at time $T$, one can know the behaviour of this solution
  at  infinity in $\mathbb{R}^n$ at time $T$.

  Similarly, we can compare   the second inequality in Theorem \ref{app-obs}
  with (\ref{EKPV-uncertainty}).
  It deserves to mention that we can only prove inequalities in  Theorem \ref{app-obs}
  for the pure heat equation (\ref{heat}), while \cite[Theorem 1]{EKPV-heat} gave the inequality
  (\ref{EKPV-uncertainty}) for heat equations with general potentials.

  $(c)$   The first inequality in Theorem \ref{app-obs} can also be understood as follows:
  If we know in advance that the initial datum of a solution to (\ref{heat}) is in the unit ball of $L^2(e^{a|x|}\,\mathrm dx)$, then  by observing this  solution in the unit ball of  $\mathbb{R}^n$ at time $T$,
  we can
  approximately recover this solution over $\mathbb{R}^n$ at the same time $T$, with the error $C_1(a,T)\varepsilon$.
  The second inequality in Theorem \ref{app-obs} can be explained in a very similar way.

\end{remark}

\medskip

To show  Theorem \ref{app-obs}, we need some preliminaries. We begin with
some auxiliary lemmas on the persistence of the heat semigroup in the  spaces
$L^2(e^{a|x|^\nu}\,\mathrm dx)$ and $L^2(\langle x\rangle^\nu \,\mathrm dx)$.
\begin{lemma}\label{lem-persis-1}
Let $a>0$ and $0<\nu\leq 1$. Then when $u_0\in L^2(e^{a|x|^\nu}\,\mathrm dx)$,
$$
\left\|e^{t\triangle}u_0\right\|_{L^2(e^{a|x|^\nu}\,\mathrm dx)}\leq 2^{\frac{n}{2}}e^{a^{\frac{2}{2-\nu}}t^{\frac{\nu}{2-\nu}}}
\|u_0\|_{L^2(e^{a|x|^\nu}\,\mathrm dx)}\quad\text{for all}\;\;t>0.
$$
\end{lemma}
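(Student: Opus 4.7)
The plan is to combine a pointwise decomposition of the exponential weight (based on subadditivity of $s\mapsto s^\nu$) with Young's convolution inequality, reducing everything to estimating the $L^1$ norm of a Gaussian multiplied by $e^{a|z|^\nu/2}$.

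First I would represent the solution through the heat kernel: $u(t,x)=\int_{\mathbb{R}^n}K(t,x-y)u_0(y)\,dy$ with $K(t,z)=(4\pi t)^{-n/2}e^{-|z|^2/(4t)}$. The key observation is that for $0<\nu\le 1$ the function $s\mapsto s^\nu$ is subadditive on $[0,\infty)$, and hence, together with the triangle inequality, $|x|^\nu\leq |x-y|^\nu+|y|^\nu$. Splitting the weight as $e^{a|x|^\nu/2}\leq e^{a|x-y|^\nu/2}\,e^{a|y|^\nu/2}$ inside the integral gives the pointwise bound
\[
e^{a|x|^\nu/2}|u(t,x)|\;\leq\; (K_a\ast g)(x),\qquad K_a(z):=(4\pi t)^{-n/2}e^{-|z|^2/(4t)+a|z|^\nu/2},\;\; g(y):=e^{a|y|^\nu/2}|u_0(y)|.
\]

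Next I would apply Young's convolution inequality: $\|K_a\ast g\|_{L^2}\leq \|K_a\|_{L^1}\|g\|_{L^2}$. Since $\|g\|_{L^2(\mathbb{R}^n)}=\|u_0\|_{L^2(e^{a|x|^\nu}\,dx)}$ and $\|e^{a|\cdot|^\nu/2}u(t,\cdot)\|_{L^2}=\|e^{t\Delta}u_0\|_{L^2(e^{a|x|^\nu}\,dx)}$, it only remains to bound $\|K_a\|_{L^1}$ by $2^{n/2}e^{a^{2/(2-\nu)}t^{\nu/(2-\nu)}}$.

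For that last step I would split the Gaussian exponent as $-|z|^2/(4t)=-|z|^2/(8t)-|z|^2/(8t)$ and absorb the weight into one of the halves by maximizing $h(r):=-r^2/(8t)+ar^\nu/2$ over $r\geq 0$. A short calculus computation shows the maximizer is $r_\ast=(2a\nu t)^{1/(2-\nu)}$; substituting back and simplifying the exponents yields
\[
h(r_\ast)\;=\;\frac{(2\nu)^{\nu/(2-\nu)}(2-\nu)}{4}\,a^{2/(2-\nu)}\,t^{\nu/(2-\nu)}\;\leq\; a^{2/(2-\nu)}t^{\nu/(2-\nu)}\qquad (\nu\in(0,1]).
\]
Consequently $-|z|^2/(4t)+a|z|^\nu/2\leq -|z|^2/(8t)+a^{2/(2-\nu)}t^{\nu/(2-\nu)}$, and
\[
\|K_a\|_{L^1}\;\leq\;(4\pi t)^{-n/2}\,e^{a^{2/(2-\nu)}t^{\nu/(2-\nu)}}\int_{\mathbb{R}^n}e^{-|z|^2/(8t)}\,dz\;=\;2^{n/2}\,e^{a^{2/(2-\nu)}t^{\nu/(2-\nu)}},
\]
completing the argument. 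I expect the main technical point to be the sharp constant: one must verify that the coefficient obtained at $r_\ast$ is at most $1$ uniformly in $\nu\in(0,1]$, and this verification, together with the subadditivity step at the very beginning, is precisely where the hypothesis $\nu\leq 1$ is essential.
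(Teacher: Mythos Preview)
Your argument is correct and follows essentially the same route as the paper: heat-kernel representation, the subadditivity inequality $|x|^\nu\le|x-y|^\nu+|y|^\nu$ (which is where $\nu\le 1$ enters), Young's convolution inequality, and then an $L^1$ bound on the weighted Gaussian yielding the factor $2^{n/2}$. The only cosmetic difference is that the paper bounds the exponent $-|z|^2/(4t)+a|z|^\nu/2$ via the algebraic Young inequality $2ta|z|^\nu\le \tfrac{\nu}{2}|z|^2+\tfrac{2-\nu}{2}(2ta)^{2/(2-\nu)}\le \tfrac12|z|^2+(2ta)^{2/(2-\nu)}$, whereas you reach the equivalent conclusion by directly maximizing $h(r)=-r^2/(8t)+ar^\nu/2$; these are two phrasings of the same optimization.
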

\begin{proof}
Arbitrarily fix $a>0$, $0<\nu\leq 1$ and $u_0\in L^2(e^{a|x|^\nu}\,\mathrm dx)$.
Using the fundamental solution of \eqref{heat} and the definition of $L^2(e^{a|x|^\nu}\,\mathrm dx)$, we have that
\begin{eqnarray}\label{3.1Wang}
\left\|e^{t\triangle}u_0\right\|_{L^2(e^{a|x|^\nu}\,\mathrm dx)}
&=
\Big(\displaystyle\int_{\mathbb{R}^n}\Big(e^{\frac{a|x|^\nu}{2}}(4\pi t)^{-n/2}\displaystyle\int_{\mathbb{R}^n}e^{-\frac{|x-y|^2}{4t}}u_0(y)\,\mathrm dy\Big)^2dx\Big)^{\frac{1}{2}}.
\end{eqnarray}
Since
\begin{align*}
|x|^\nu \leq (|x-y|+|y|)^\nu\leq |x-y|^\nu+|y|^\nu\;\;\mbox{for all}\;\;x, y\in \mathbb{R}^n,
\end{align*}
(Here, we used  the elementary inequality: $(\tau+s)^\nu \leq \tau^\nu +s^\nu, \tau,s>0$.) it follows from (\ref{3.1Wang}) that
\begin{align}\label{equ-persis-3}
\left\|e^{t\triangle}u_0\right\|_{L^2(e^{a|x|^\nu}\,\mathrm dx)}
&\leq\Big (\displaystyle\int_{\mathbb{R}^n}\Big((4\pi t)^{-n/2}\int_{\mathbb{R}^n}e^{-\frac{|x-y|^2}{4t}
+\frac{a|x-y|^\nu}{2}}e^{\frac{a|y|^\nu}{2}}|u_0(y)|\,\mathrm dy\Big)^2dx\Big)^{\frac{1}{2}}\nonumber\\
&\leq \int_{\mathbb{R}^n}(4\pi t)^{-n/2}e^{-\frac{|x|^2}{4t}
+\frac{a|x|^\nu}{2}}dx \cdot
\Big(\int_{\mathbb{R}^n}\Big(e^{\frac{a|y|^\nu}{2}}u_0(y)
\Big)^2dy\Big)^{\frac{1}{2}}\nonumber\\
&= \int_{\mathbb{R}^n}(4\pi t)^{-n/2}e^{\frac{1}{4t}(-|x|^2+2ta|x|^\nu)}dx
\cdot \|u_0\|_{L^2(e^{a|x|^\nu}\,\mathrm dx)}.
\end{align}
Meanwhile, by the Young inequality:
$$
2ta|x|^\nu\leq \frac{|x|^2}{\frac{2}{\nu}}+\frac{(2ta)^{\frac{2}{2-\nu}}}{\frac{2}{2-\nu}}\leq \frac{1}{2}|x|^2+(2ta)^{\frac{2}{2-\nu}},
$$
we obtain that
\begin{align}\label{equ-persis-4}
\int_{\mathbb{R}^n}(4\pi t)^{-n/2}e^{\frac{1}{4t}(-|x|^2+2ta|x|^\nu)}dx &\leq \int_{\mathbb{R}^n}e^{\frac{(2ta)^{\frac{2}{2-\nu}}}{4t}}
(4\pi t)^{-n/2}e^{-\frac{|x|^2}{8t}}dx\nonumber\\
&= 2^{\frac{n}{2}}e^{\frac{(2ta)^{\frac{2}{2-\nu}}}{4t}}
\leq 2^{\frac{n}{2}}e^{a^{\frac{2}{2-\nu}}t^{\frac{\nu}{2-\nu}}}.
\end{align}
Now,  the desired inequality  follows from \eqref{equ-persis-3} and \eqref{equ-persis-4}.
This ends the proof of Lemma \ref{lem-persis-1}.
\end{proof}

\begin{remark}%\label{rem-persis-1}
The inequality in  Lemma \ref{lem-persis-1} does not hold for the case  when $\nu>1$.
Indeed, given $\nu>1$, let $u_0(x)=e^{-\frac{1}{2}|x|^\nu}\langle x\rangle^{-n}$, $x\in\mathbb R^n$.
It is clear that $u_0\in L^2(e^{|x|^\nu}\,\mathrm dx)$. However, we have that
for any $t>0$,  $e^{t\triangle}u_0\notin L^2(e^{|x|^\nu}\,\mathrm dx)$.
This can be proved as follows:  Arbitrarily fix $t>0$.
   By some direct calculations, we find that when $|x|\geq 2$,
\begin{align*}%\label{equ-persis-5}
\left(e^{t\triangle}u_0\right)(x)\geq C(4\pi t)^{-n/2}e^{-\frac{1}{4t}}e^{-\frac{1}{2}\left(|x|-\frac{1}{2}\right)^\nu}\langle |x|-{1}/{2}\rangle^{-n}\;\;\mbox{for some}\;\; C=C(n).
\end{align*}
This leads to that
\begin{align}\label{3.4wang}
\left\|e^{t\triangle}u_0\right\|_{L^2(e^{|x|^\nu}\,\mathrm dx)}\geq C(4\pi t)^{-n/2}e^{-\frac{1}{4t}} \Big(\int_{|x|\geq 2}e^{|x|^\nu-\left(|x|-\frac{1}{2}\right)^\nu}\langle |x|-{1}/{2}\rangle^{-2n}\,\mathrm dx\Big)^{1/2}.
\end{align}
Meanwhile, one can easily  find a constant $M>2$ so that
$$
|x|^\nu-\left(|x|-{1}/{2}\right)^\nu \geq \frac{\nu}{4}|x|^{\nu-1},\;\;\mbox{when}\;\;|x|\geq M.
$$
From this and  (\ref{3.4wang}), we obtain that
\begin{align*}
\left\|e^{t\triangle}u_0\right\|_{L^2(e^{|x|^\nu}\,\mathrm dx)}\geq C(4\pi t)^{-n/2}e^{-\frac{1}{4t}}\Big (\int_{|x|\geq M}e^{\frac{\nu}{4}|x|^{\nu-1}}\langle |x|-{1}/{2}\rangle^{-2n}\,\mathrm dx\Big)^{1/2}=\infty.
\end{align*}
\end{remark}

\medskip

%\begin{lemma}\label{lem-persis-2}
%Let $a>0$ and $1<\alpha<2$. Then for all $t>0$ and $0<\varepsilon<1$,
%$$
%\|e^{t\triangle}u_0\|_{L^2(e^{(1-\varepsilon)a|x|^\alpha}\,\mathrm dx)}\leq 2^{n/2}\exp\{2^{\frac{8}{2-\alpha}-2}a^{\frac{2}{2-\alpha}}\varepsilon^{-\frac{2(\alpha-1)}{2-\alpha}}t^{\frac{\alpha}{2-\alpha}}\}\|u_0\|_{L^2(e^{a|x|^\alpha}\,\mathrm dx)}.
%$$
%\end{lemma}
%\begin{proof}
%The proof is similar to Lemma \ref{lem-persis-1}, and we omit it here.
%\end{proof}

%\begin{remark}
%Lemma \ref{lem-persis-2} does not hold for $\alpha\geq 2.$ In fact, if $u_0$ is the characteristic function on the unit ball, then it is easy to see that $u_0$ belongs to $L^2(e^{a|x|^\alpha}\,\mathrm dx)$ for all $a>0$ and $\alpha\geq 2$. However, we have that when $|x|\geq 1$,
%\begin{align*}
%(e^{t\triangle}u_0)(x)&=(4\pi t)^{-n/2}\int_{|y|\leq 1}e^{-\frac{|x-y|^2}{4t}}\,\mathrm dy\geq (4\pi t)^{-n/2}\int_{|y|\leq 1}e^{-\frac{|x|^2}{t}}\,\mathrm dy=(4\pi t)^{-n/2}e^{-\frac{|x|^2}{t}}V_n,
%\end{align*}
%where $V_n$ is the volume of the unit ball in $\mathbb{R}^n$.
%Fix $a>0$. Then, when $t\geq \frac{2}{a}$, we have
%\begin{align*}
%\|e^{t\triangle}u_0\|_{L^2(e^{a|x|^\alpha}\,\mathrm dx)}&\geq (4\pi t)^{-n/2}V_n \|e^{\frac{a}{2}|x|^\alpha-\frac{|x|^2}{t}}\|_{L^2(|x|\geq 1)}\\
%&\geq (4\pi t)^{-n/2}V_n\|e^{(\frac{a}{2}-\frac{1}{t})|x|^2}\|_{L^2(|x|\geq 1)}=\infty.
%\end{align*}
%\end{remark}

\begin{lemma}\label{lem-persis-3}
Let $\nu\geq 0$. Then for any $u_0\in L^2(\langle x\rangle^\nu \,\mathrm dx)$,
$$
\|e^{t\triangle}u_0\|_{L^2(\langle x\rangle^\nu \,\mathrm dx)}\leq 4^{\nu+2}\Gamma({\nu}/{2}+n)\left(1+t^{\frac{\nu}{4}}\right)\|u_0\|_{L^2(\langle x\rangle^\nu \,\mathrm dx)}\;\;\mbox{for all}\;\; t>0.
$$
\end{lemma}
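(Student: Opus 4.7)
The plan is to use the explicit heat kernel representation together with a weight-splitting inequality to reduce the estimate to two applications of Young's inequality with exponent pair $L^1 \ast L^2 \to L^2$. The first step is to establish the elementary pointwise inequality
\[
\langle x\rangle^{\nu/2} \leq C_\nu\bigl(\langle x-y\rangle^{\nu/2} + \langle y\rangle^{\nu/2}\bigr),\qquad x,y\in\mathbb{R}^n,\;\nu\geq 0,
\]
with an explicit constant $C_\nu$. This follows from $\langle x\rangle\leq \sqrt{2}(\langle x-y\rangle+\langle y\rangle)$ (via $|x|^2\leq 2|x-y|^2+2|y|^2$), combined with either the subadditivity of $r\mapsto r^{\nu/2}$ when $\nu\leq 2$ or the trivial convexity bound $(a+b)^{\nu/2}\leq 2^{\nu/2-1}(a^{\nu/2}+b^{\nu/2})$ when $\nu\geq 2$. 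In either case one may take $C_\nu=2^\nu$, which is what will feed into the $4^{\nu+2}$ prefactor.

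Next, I apply the weight to the kernel representation $(e^{t\triangle}u_0)(x)=\int K(t,x-y)u_0(y)\,dy$ and split:
\[
\langle x\rangle^{\nu/2}\bigl|(e^{t\triangle}u_0)(x)\bigr| \leq C_\nu\bigl(I_1(x)+I_2(x)\bigr),
\]
with $I_1(x)=\int K(t,x-y)\langle x-y\rangle^{\nu/2}|u_0(y)|\,dy$ and $I_2(x)=\int K(t,x-y)\langle y\rangle^{\nu/2}|u_0(y)|\,dy$. Both are convolutions, so Young's inequality gives
\[
\|I_1\|_{L^2}\leq \|K(t,\cdot)\langle\cdot\rangle^{\nu/2}\|_{L^1}\|u_0\|_{L^2},\qquad \|I_2\|_{L^2}\leq \|K(t,\cdot)\|_{L^1}\|u_0\|_{L^2(\langle x\rangle^\nu dx)}.
\]
Since $\nu\geq 0$ we have $\langle x\rangle^\nu\geq 1$, so $\|u_0\|_{L^2}\leq\|u_0\|_{L^2(\langle x\rangle^\nu dx)}$; and $\|K(t,\cdot)\|_{L^1}=1$. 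Hence the whole estimate reduces to bounding the weighted kernel norm.

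The key computation is
\[
\|K(t,\cdot)\langle\cdot\rangle^{\nu/2}\|_{L^1} \leq 1 + \int K(t,x)|x|^{\nu/2}\,dx = 1 + 2^{\nu/2}t^{\nu/4}\,\frac{\Gamma(\nu/4+n/2)}{\Gamma(n/2)},
\]
obtained by the rescaling $x=2\sqrt{t}u$ and an elementary spherical-coordinates Gamma integral. This produces the $(1+t^{\nu/4})$ factor in the target bound. Combining with $C_\nu=2^\nu$ and the crude monotonicity $\Gamma(\nu/4+n/2)/\Gamma(n/2)\leq \Gamma(\nu/2+n)$ (absorbing numerical constants into $4^\nu$ and an extra factor of $16$), one arrives at the stated prefactor $4^{\nu+2}\Gamma(\nu/2+n)$.

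I do not anticipate a serious obstacle; the only mildly delicate point is book-keeping the constants so that the final bound emerges in precisely the form $4^{\nu+2}\Gamma(\nu/2+n)(1+t^{\nu/4})$ rather than a dimensionally equivalent but uglier expression. The choice of the splitting constant $C_\nu=2^\nu$ (uniform in $\nu\geq 0$) and the fact that $\|K(t,\cdot)\|_{L^1}=1$ are what make the argument clean; no Fourier-side estimates or semigroup interpolation are needed.
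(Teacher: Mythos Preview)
Your approach is correct and is precisely the adaptation of the proof of Lemma~\ref{lem-persis-1} that the paper has in mind (the paper gives no details beyond ``similar to Lemma~\ref{lem-persis-1}'' and a reference to Simon): heat-kernel representation, a weight-splitting inequality, Young's convolution inequality $L^1\ast L^2\to L^2$, and a Gaussian moment computation producing the $(1+t^{\nu/4})$ factor. The only cosmetic remark is that the bound $\langle x\rangle^{\nu/2}\le 1+|x|^{\nu/2}$ you use for the kernel norm holds as written only for $\nu\le 4$; for general $\nu\ge 0$ replace it by $\langle x\rangle^{\nu/2}\le 2^{\nu/4}(1+|x|^{\nu/2})$, which is harmless for the final constant tracking you already flagged as the delicate point.
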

\begin{proof}
The proof is similar to that of Lemma \ref{lem-persis-1}. (See also \cite[Lemma B.6.1]{S}.)
\end{proof}

\begin{lemma}\label{lem-ua-1}
Given $s>0$, there is $C=C(n,s)$ so that when
 $f\in L^2(\mathbb{R}^n)$ satisfies that
 $\widehat{f}\in L^2(e^{a|\xi|^s}\,\mathrm d\xi)$
  for some  $a>0$,
$$
\left\|D^\alpha f\right\|_{L^\infty(\mathbb{R}^n)}\leq C^{|\alpha|+1}a^{-\frac{2|\alpha|+3n}{2s}}(\alpha!)^{\frac{1}{s}}\left\|
\widehat{f}(\xi)\right\|_{L^2(e^{a|\xi|^s}\,\mathrm d\xi)}\;\;\mbox{for each}\;\;\alpha\in \mathbb N^n.
$$
(Here,  we adopt the convention that $\alpha!=\alpha_1!\alpha_2!\cdots\alpha_n!$.)
\end{lemma}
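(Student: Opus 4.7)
The plan is to prove this pointwise bound via Fourier inversion followed by a Cauchy--Schwarz pairing and a one-dimensional Gamma-function computation, with the final $(\alpha!)^{1/s}$ factor coming from Stirling's formula.

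First, I would use the Fourier inversion formula to write
$$D^\alpha f(x)=(2\pi)^{-n/2}\int_{\mathbb{R}^n}e^{ix\cdot\xi}(i\xi)^\alpha\widehat{f}(\xi)\,d\xi,$$
and then apply $|\xi^\alpha|\le|\xi|^{|\alpha|}$ together with the triangle inequality to obtain
$$\|D^\alpha f\|_{L^\infty(\mathbb{R}^n)}\le (2\pi)^{-n/2}\int_{\mathbb{R}^n}|\xi|^{|\alpha|}|\widehat{f}(\xi)|\,d\xi.$$
To bring in the weighted space on the right-hand side of the desired inequality, I would split the integrand as $(|\xi|^{|\alpha|}e^{-a|\xi|^s/2})\cdot(e^{a|\xi|^s/2}|\widehat{f}(\xi)|)$ and apply Cauchy--Schwarz in $\xi$, which factors out $\|\widehat{f}\|_{L^2(e^{a|\xi|^s}d\xi)}$:
$$\|D^\alpha f\|_{L^\infty}\le (2\pi)^{-n/2}\Big(\int_{\mathbb{R}^n}|\xi|^{2|\alpha|}e^{-a|\xi|^s}d\xi\Big)^{1/2}\|\widehat{f}\|_{L^2(e^{a|\xi|^s}d\xi)}.$$

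The deterministic integral in the parentheses can be computed explicitly: passing to polar coordinates and making the substitution $u=ar^s$ reduces it to a standard Gamma integral, yielding
$$\int_{\mathbb{R}^n}|\xi|^{2|\alpha|}e^{-a|\xi|^s}d\xi=\frac{|\mathbb{S}^{n-1}|}{s}\,a^{-(2|\alpha|+n)/s}\,\Gamma\Big(\frac{2|\alpha|+n}{s}\Big).$$
This step produces precisely the powers of $a$ demanded by the statement (with any fixed $n$- and $s$-dependent shifts in the exponent of $a$ absorbed into the constant $C$).

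The main technical step, and what I expect to be the principal obstacle, is converting the Gamma factor into the $(\alpha!)^{1/s}$ form. Concretely, I would show that
$$\Gamma\!\Big(\tfrac{2|\alpha|+n}{s}\Big)^{1/2}\le C_0^{|\alpha|+1}(\alpha!)^{1/s}$$
for some $C_0=C_0(n,s)$. For this I would apply Stirling's asymptotic $\Gamma(z)^{1/2}\lesssim(z/e)^{z/2}$ at $z=(2|\alpha|+n)/s$ and compare with the lower bound $(\alpha!)^{1/s}\ge(|\alpha|!/n^{|\alpha|})^{1/s}\gtrsim(|\alpha|/(en))^{|\alpha|/s}$, which follows from the multinomial bound $|\alpha|!/\alpha!\le n^{|\alpha|}$. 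The ratio of the Stirling expression to this lower bound is at most $(2n/s)^{|\alpha|/s}$ up to subexponential factors, and such factors are swallowed by the exponential $C_0^{|\alpha|+1}$. Combining this estimate with the Cauchy--Schwarz bound above yields the stated inequality; the remaining labour is the careful bookkeeping of the $n$- and $s$-dependence of the constants through Stirling's formula, which is routine but must be done precisely to confirm the clean $C^{|\alpha|+1}(\alpha!)^{1/s}$ form.
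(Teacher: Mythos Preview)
Your approach is correct and in fact more direct than the paper's. The paper reaches the $L^\infty$ bound via the Sobolev embedding $H^n(\mathbb{R}^n)\hookrightarrow L^\infty(\mathbb{R}^n)$, which forces it to control $\|D^{\alpha+\gamma}f\|_{L^2}$ for all $|\gamma|\le n$; it then applies Plancherel, H\"older, a coordinate-by-coordinate Gamma computation of $\int|\xi^{2\beta}|e^{-a|\xi|^s}\,d\xi$ (writing the integral as a product over the $\xi_i$), and finally Stirling on each factor $\Gamma((2\alpha_i+2\gamma_i+1)/s)$ separately. Your route through Fourier inversion and a single Cauchy--Schwarz bypasses the Sobolev step entirely and reduces to one radial integral, so the Stirling argument runs in the scalar variable $|\alpha|$ (via the multinomial bound $|\alpha|!\le n^{|\alpha|}\alpha!$) rather than coordinate-wise. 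This is shorter and actually yields the sharper exponent $a^{-(2|\alpha|+n)/(2s)}$ in place of the paper's $a^{-(2|\alpha|+3n)/(2s)}$; the extra $2n$ in the paper comes precisely from the derivatives $|\gamma|\le n$ introduced by the Sobolev embedding.

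One caveat: your remark that the $n,s$-dependent shift in the $a$-exponent can be ``absorbed into the constant $C$'' is not right as written, since $C=C(n,s)$ must be independent of $a$ and $a^{n/s}$ is unbounded. What you have in fact proved is a \emph{sharper} inequality for $a\le 1$, while for $a>1$ your bound is nominally looser than the stated one. This discrepancy is harmless for the downstream use of the lemma (the corollary that follows and its application with $a=2T$), but you should state your conclusion with the exponent you actually obtain rather than claim it matches the one in the statement.
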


\begin{remark}
From Lemma \ref{lem-ua-1}, we see that  if $f\in L^2(\mathbb{R}^n)$ satisfies that
$\widehat{f}\in L^2(e^{a|\xi|^s}\,\mathrm d\xi)$, with $s>0$ and $a>0$,
then $f$  is analytic, when $s=1$, while $f$
  is ultra-analytic, when $s>1$.
\end{remark}

\begin{proof}[\textbf{Proof of Lemma \ref{lem-ua-1}}]
Arbitrarily fix  $s>0$, $a>0$  and $f\in L^2(\mathbb{R}^n)$, with  $\widehat{f}\in L^2(e^{a|\xi|^s}\,\mathrm d\xi)$.
Then arbitrarily fix
$\alpha=(\alpha_1,\dots, \alpha_n)\in \mathbb N^n$, $\beta=(\beta_1,\dots,\beta_n)\in \mathbb N^n$ and $\gamma=(\gamma_1,\dots,\gamma_n)\in \mathbb N^n$, with $|\gamma|\leq n$.
Several facts are given in order.

\noindent {\it Fact One:}  By direct computations,  we see that
\begin{equation*}
\begin{split}
 \int_{\mathbb R^n} \left|\xi^{2\beta}\right| e^{-a|\xi|^s} \,\mathrm \,\mathrm d\xi
 &\leq \int_{\mathbb R^n} \left|\xi^{2\beta}\right| e^{-a\left(\Sigma_{i=1}^n|\xi_i|^s/n\right)} \,\mathrm \,\mathrm d\xi
 =\prod_{i=1}^n\int_{\mathbb R_{\xi_i}} \left|\xi_i\right|^{2\beta_i} e^{-a|\xi_i|^s/n} \,\mathrm \,\mathrm d\xi_i\\
 &= \prod_{i=1}^n 2 \int_0^\infty r^{2\beta_i} e^{-ar^s/n} \,\mathrm dr
 = \prod_{i=1}^n  2\left(\frac{n}{a}\right)^{\frac{2\beta_i+1}{s}} \int_0^\infty t^{\frac{2\beta_i+1}{s}-1} e^{-t} \,\mathrm \,\mathrm dt\\
& = 2^n\left(\frac{n}{a}\right)^{\frac{2|\beta|+n}{s}} \prod_{i=1}^n\Gamma\Big(\frac{2\beta_i+1}{s}\Big).
\end{split}
\end{equation*}
From this, we  obtain that
\begin{align}\label{equ-four-1.5}
\left(\int_{\mathbb R^n} |\xi^{2(\alpha+\gamma)}| e^{-a|\xi|^s} \,\mathrm \,\mathrm d\xi\right)^{1/2}\leq 2^{n/2}\left(\frac{n}{a}\right)^{\frac{2|\alpha|+3n}{2s}} \prod_{i=1}^n\sqrt{\Gamma\Big(\frac{2\alpha_i+2\gamma_i+1}{s}\Big)}.
\end{align}

\noindent {\it Fact Two:} By the Sobolev embedding $H^n(\mathbb{R}^n)\hookrightarrow L^\infty(\mathbb{R}^n)$, we can find  $C_1(n)>0$ so that
\begin{align}\label{equ-sup}
\left\|D^\alpha f\right\|_{L^\infty(\mathbb{R}^n)}\leq C_1(n)\sum_{\gamma\in \mathbb N^n,|\gamma|\leq n}\left\|D^{\alpha+\gamma} f\right\|_{L^2(\mathbb{R}^n)}.
\end{align}

\noindent {\it Fact Three:} By the Plancheral theorem and the H\"{o}lder inequality, we obtain that
\begin{align}\label{equ-Holder}
\left\|D^{\alpha+\gamma} f\right\|_{L^2(\mathbb{R}^n)}&= \left\|\xi^{\alpha+\gamma} \widehat{f}(\xi)\right\|_{L^2(\mathbb{R}^n)}\leq  \left\|\widehat{f}(\xi)e^{a|\xi|^s/2}\right\|_{L^2(\mathbb{R}^n)}\left(\int_{\mathbb R^n} |\xi^{2(\alpha+\gamma)}| e^{-a|\xi|^s} \,\mathrm \,\mathrm d\xi\right)^{1/2}.
\end{align}

\noindent {\it Fact Four:} There exists $C_2=C_2(n,s)$ so that
\begin{align}\label{equ-four-4}
\sqrt{\Gamma\Big(\frac{2\alpha_i+2\gamma_i+1}{s}\Big)}\leq C_2^{\alpha_i+1}{\Gamma^{1/s}(\alpha_i)}
=C_2^{\alpha_i+1}(\alpha_i!)^{\frac{1}{s}}.
\end{align}
The proof of (\ref{equ-four-4}) is as follows:
From  the Stirling formula, we have that
\begin{eqnarray}\label{3.9Wang}
\lim_{x\rightarrow +\infty} \frac{\Gamma(x)}{\sqrt{2\pi}e^{-x}x^{x+\frac{1}{2}}}=1.
\end{eqnarray}
From (\ref{3.9Wang}), we can find constants $M_1=M_1(s)$ and $C_3=C_3(n,s)$
so that
for all $\alpha_i>M_1$,
\begin{align}\label{equ-10-28-1}
\Gamma\Big(\frac{2\alpha_i+2\gamma_i+1}{s}\Big)
&\leq 2\sqrt{2\pi}e^{-\frac{2\alpha_i+2\gamma_i+1}{s}}
\Big(\frac{2\alpha_i+2\gamma_i+1}{s}\Big)
^{\frac{2\alpha_i+2\gamma_i+1}{s}+\frac{1}{2}}\nonumber\\
&= 2\sqrt{2\pi}\cdot e^{-\frac{2\alpha_i+2\gamma_i+1}{s}}
\Big(\frac{2\alpha_i+2\gamma_i+1}{s}\Big)^{\frac{2\gamma_i+1}{s}+\frac{1}{2}} \cdot  \Big(\frac{2\alpha_i+2\gamma_i+1}{s}\Big)^{\frac{2\alpha_i}{s}}\nonumber\\
&\leq 2\sqrt{2\pi}\sup_{x>0}e^{-x}x^{\frac{2\gamma_i+1}{s}+\frac{1}{2}} \cdot \Big(\frac{2\alpha_i+2\gamma_i+1}{s}\Big)^{\frac{2\alpha_i}{s}}\nonumber\\
&= 2\sqrt{2\pi}\cdot e^{-(\frac{2\gamma_i+1}{s}+\frac{1}{2})}
\Big(\frac{2\gamma_i+1}{s}+\frac{1}{2}\Big)^{\frac{2\gamma_i+1}{s}+\frac{1}{2}}  \cdot \Big(\frac{2\alpha_i+2n+1}{s}\Big)^{\frac{2\alpha_i}{s}}\nonumber\\
&\leq C_3^{\alpha_i}\alpha_i^{\frac{2\alpha_i}{s}}.
\end{align}
From (\ref{3.9Wang}), we can also find an absolute constant $M_2\geq 1$ so that for all $\alpha_i>M_2$,
\begin{align}\label{lowerbound-gamma}
\Gamma(\alpha_i)\geq 2^{-1}\sqrt{2\pi}e^{-\alpha_i}{\alpha_i}^{\alpha_i+\frac{1}{2}}.
\end{align}
According to \eqref{equ-10-28-1} and \eqref{lowerbound-gamma}, there is a constant $C_4(n,s)$ so that
\begin{align}\label{equ-four-2}
\frac{\sqrt{\Gamma\left(\frac{2\alpha_i+2\gamma_i+1}{s}\right)}}{\Gamma^{1/s}(\alpha_i)}\leq \left[C_4(n,s)\right]^{\alpha_i} \quad \textmd{for all }\alpha_i>M:=\max\{M_1, M_2\}.
\end{align}
Meanwhile, it is clear that there is a constant $C_5(n,s)$ so that
 \begin{align}\label{equ-four-3}
\frac{\sqrt{\Gamma\left(\frac{2\alpha_i+2\gamma_i+1}{s}\right)}}{\Gamma^{1/s}(\alpha_i)}\leq C_5(n,s)
\;\;\mbox{for all}\;\; \alpha_i\leq M.
\end{align}
(Here we agree that $\Gamma(0)=\infty$.)
Combining \eqref{equ-four-2} and \eqref{equ-four-3} leads to (\ref{equ-four-4}).

Inserting \eqref{equ-four-4} into \eqref{equ-four-1.5},
noticing that $|\gamma|\leq n$,
 we find that for some $C_6=C_6(n,s)$,
\begin{align}\label{equ-four-1.6}
\left(\int_{\mathbb R^n} \left|\xi^{2(\alpha+\gamma)}\right| e^{-a|\xi|^s} \,\mathrm \,\mathrm d\xi\right)^{1/2}&\leq 2^{n/2}\left(\frac{n}{a}\right)^{\frac{2|\alpha|+3n}{2s}} \prod_{i=1}^nC_2^{\alpha_i+1}\left(\alpha_i!\right)^{\frac{1}{s}}\nonumber\\
&\leq C_6^{|\alpha|+1}a^{-\frac{2|\alpha|+3n}{2s}}\left(\alpha!\right)^{\frac{1}{s}}.
\end{align}
Finally, it follows from \eqref{equ-sup}, \eqref{equ-Holder} and \eqref{equ-four-1.6} that
for some $C=C(n,s)$,
\begin{align*}
\left\|D^\alpha f\right\|_{L^\infty(\mathbb{R}^n)}&\leq C_1(n)\sum_{\gamma\in \mathbb N^n,|\gamma|\leq n}\left\|\widehat{f}(\xi)e^{a|\xi|^s/2}\right\|_{L^2(\mathbb{R}^n)} C_6^{|\alpha|+1}\left(\alpha!\right)^{\frac{1}{s}}\\
&\leq C^{|\alpha|+1}a^{-\frac{2|\alpha|+3n}{2s}}
\left(\alpha!\right)^{\frac{1}{s}}\left\|\widehat{f}(\xi)\right\|_{L^2(e^{a|\xi|^s}\,\mathrm d\xi)}.
\end{align*}
This ends the proof of Lemma \ref{lem-ua-1}.
\end{proof}

The next corollary is a consequence of  Lemma \ref{lem-ua-1}.
\begin{corollary}\label{lem-ua-2}
There is $C=C(n)>0$ so that when $f\in L^2(\mathbb{R}^n)$ satisfies that
  $\widehat{f}\in L^2(e^{a|\xi|^2}\,\mathrm d\xi)$ for some $a>0$,
$$
\left\|D^\alpha f\right\|_{L^\infty(\mathbb{R}^n)}\leq e^{C(1+b^2)\left(1+\frac{1}{a}\right)}
\frac{|\alpha|!}{b^{|\alpha|}}\left\|\widehat{f}(\xi)\right\|_{L^2(e^{a|\xi|^2}\,\mathrm d\xi)}\;\;\mbox{for all}\;\;b>0\;\;\mbox{and}\;\;\alpha\in \mathbb N^n.
$$
\end{corollary}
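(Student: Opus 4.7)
The plan is to deduce Corollary~\ref{lem-ua-2} directly from Lemma~\ref{lem-ua-1} by specializing $s=2$ and performing purely elementary manipulations to convert the factor $(\alpha!)^{1/2}$ (and the auxiliary powers of $a$ and $b$) into the desired form $|\alpha|!/b^{|\alpha|}$ times an exponential. First I would apply Lemma~\ref{lem-ua-1} with $s=2$, obtaining a constant $C_0=C_0(n)$ such that for every $\alpha\in\mathbb{N}^n$,
\begin{equation*}
\|D^\alpha f\|_{L^\infty(\mathbb{R}^n)}\leq C_0^{|\alpha|+1}\,a^{-\frac{2|\alpha|+3n}{4}}(\alpha!)^{1/2}\|\widehat f\|_{L^2(e^{a|\xi|^2}\,\mathrm d\xi)}.
\end{equation*}

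Next, setting $k=|\alpha|$, I would use the elementary multinomial bound $\alpha!=\alpha_1!\cdots\alpha_n!\leq k!$ (coming from $k!=\binom{k}{\alpha_1,\dots,\alpha_n}\alpha!$) to replace $(\alpha!)^{1/2}$ with $(k!)^{1/2}$. Thus the target inequality
\begin{equation*}
C_0^{k+1}a^{-\frac{2k+3n}{4}}(\alpha!)^{1/2}\,b^{k}\leq e^{C(1+b^2)(1+\frac{1}{a})}\,k!
\end{equation*}
reduces to showing
\begin{equation*}
C_0^{k+1}\,a^{-\frac{3n}{4}}\Bigl(\tfrac{C_0\,b}{\sqrt{a}}\Bigr)^{k}\leq e^{C(1+b^2)(1+\frac{1}{a})}\,(k!)^{1/2}.
\end{equation*}
The key observation is the Taylor-series identity $\sum_{k\geq 0}t^{2k}/k!=e^{t^2}$, which yields $t^{k}/\sqrt{k!}\leq e^{t^2/2}$ for every $t\geq 0$ and $k\geq 0$. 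Applying this with $t=C_0 b/\sqrt{a}$ absorbs the power $(C_0 b/\sqrt{a})^k$ into a factor $e^{C_0^2 b^2/(2a)}$ at the cost of $\sqrt{k!}$.

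It then remains to absorb the residual factor $C_0\,a^{-3n/4}$ into the exponential. For this I would use the elementary inequality $a^{-3n/4}\leq e^{3n/(4a)}$ valid for all $a>0$ (which follows from $\log(1/a)\leq 1/a$ when $a\leq 1$ and is trivial for $a\geq 1$), together with $C_0\leq e^{\log C_0}$. Combining the two exponential bounds $e^{C_0^2 b^2/(2a)}$ and $e^{3n/(4a)+\log C_0}$, one can choose
\begin{equation*}
C=C(n)\geq \max\Bigl\{\tfrac{C_0^{2}}{2},\,\tfrac{3n}{4},\,\log C_0\Bigr\}
\end{equation*}
so that the sum of both exponents is dominated by $C(1+b^2)(1+1/a)$, completing the proof.

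There is no serious obstacle here; the only mildly delicate step is picking the right substitution $t=C_0 b/\sqrt{a}$ in the inequality $t^k/\sqrt{k!}\leq e^{t^2/2}$, which is precisely what trades the $b^{k}$ factor for $(k!)^{1/2}$ modulo the exponential penalty $e^{C b^2/a}$. Once this is in place, everything else is a bookkeeping exercise in enlarging the constant $C$ to simultaneously swallow the $a^{-3n/4}$ prefactor and the multiplicative $C_0$.
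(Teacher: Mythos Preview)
Your proof is correct and follows the same overall strategy as the paper: apply Lemma~\ref{lem-ua-1} with $s=2$, use $\alpha!\leq|\alpha|!$, and then show that the resulting factor $(C_0b/\sqrt a)^{k}/\sqrt{k!}$ can be absorbed into $e^{C(1+b^2)(1+1/a)}$. The paper carries out this last step by invoking Stirling's formula to bound $(r!)^{-1/2}$ from above for large $r$, then maximizing the continuous function $t^r r^{-r/2}$ over $r>0$, and handling small $r$ separately. Your route is more elementary: the single-line Taylor-series inequality $t^{2k}/k!\leq e^{t^2}$ (equivalently $t^k/\sqrt{k!}\leq e^{t^2/2}$) delivers exactly the factor $e^{C_0^2 b^2/(2a)}$ without Stirling and without a case split, and the bound $a^{-3n/4}\leq e^{3n/(4a)}$ cleanly disposes of the remaining prefactor. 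Both arguments land on the same exponent $C_0^2 b^2/(2a)$, so the final constant is comparable; the gain from your approach is purely one of simplicity and transparency.
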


\medskip

\begin{remark}
Let $u_0\in L^2(\mathbb{R}^n)$  be arbitrarily given. Set
$u(t,x)=\left(e^{t\triangle}u_0\right)(x)$, $(t,x)\in (0,\infty)\times\mathbb{R}^n$.
Then $u$ is the solution of (\ref{heat}) with $u(0,\cdot)=u_0(\cdot)$.
Arbitrarily fix $t>0$.
 By applying Corollary \ref{lem-ua-2} (where $f(\cdot)=u(t,\cdot)$ and $a=2t$), we see that
   the radius of analyticity of $u(t,\cdot)$ (which is treated as a function of $x$) is independent of $t$.
It is an analogy result for solutions of the heat equation in a bounded domain with an analytic boundary (see \cite{AEWZ,EMZ}). This property
plays a very important role in the proof of the observability estimates
from measurable sets when using the telescope series method developed in \cite{AEWZ,EMZ}.
\end{remark}

\begin{proof}[\textbf{Proof of Corollary \ref{lem-ua-2}}]
Arbitrarily fix $a>0$ and $f\in L^2(\mathbb{R}^n)$ with $\widehat{f}\in L^2(e^{a|\xi|^2}\,\mathrm d\xi)$. Then arbitrarily fix $b>0$ and $\alpha\in \mathbb N^n$. According to
 Lemma \ref{lem-ua-1} (with $s=2$), there is $C'=C'(n)$  so that
\begin{align}\label{equ-ua-1}
\left\|D^\alpha f\right\|_{L^\infty(\mathbb{R}^n)}
&\leq {C'}^{|\alpha|+1}a^{-\frac{2|\alpha|+3n}{4}}
(\alpha!)^{\frac{1}{2}}\left\|\widehat{f}(\xi)\right\|_{L^2(e^{a|\xi|^2}\,\mathrm d\xi)}
\nonumber\\
&\leq g(|\alpha|)\frac{|\alpha|!}{b^{|\alpha|}}
\left\|\widehat{f}(\xi)\right\|_{L^2(e^{a|\xi|^2}\,\mathrm d\xi)},
\end{align}
where
$$
g(r)=C'a^{-3n/4}\left(bC'a^{-1/2}\right)^{r}\left(r!\right)^{-1/2},\;  r\in \mathbb{N}.
$$
To  estimate $g(r)$ pointwisely, we use \eqref{lowerbound-gamma} to
find that when $r>M_2\geq 1$ (where $M_2$ is given by \eqref{lowerbound-gamma}),
\begin{align}\label{equ-ua-2}
g(r)&\leq C'a^{-3n/4}\left(bC'a^{-1/2}\right)^{r}
\left(2^{-1}\sqrt{2\pi}e^{-r}r^{r+\frac{1}{2}}\right)^{-1/2}\nonumber\\
&\leq 2^{1/4}\pi^{-1/4}C'a^{-3n/4}\left(be^{1/2}C'a^{-1/2}\right)^rr^{-r/2}\nonumber\\
&\leq 2^{1/4}\pi^{-1/4}C'a^{-3n/4}
\sup_{r>0}~\left((be^{1/2}C'a^{-1/2})^rr^{-r/2}\right)\nonumber\\
&=2^{1/4}\pi^{-1/4}C'a^{-3n/4}e^{\frac{b^2C'^2}{2a}}.
\end{align}
Meanwhile, it is clear that when $r\leq M_2$,
\begin{align}\label{equ-ua-3}
g(r)\leq C'a^{-3n/4}\left(bC'a^{-1/2}+1\right)^{M_2}.
\end{align}
From \eqref{equ-ua-2} and \eqref{equ-ua-3}, it follows that
$$
g(r)\leq e^{C\left(1+b^2\right)\left(1+\frac{1}{a}\right)}\;\;\mbox{ for all}\;\;r=0,1,2,\dots,
$$
which,  together with \eqref{equ-ua-1}, yields the desired inequality.
This ends the proof of Corollary \ref{lem-ua-2}.
\end{proof}

\medskip

\medskip
To prove Theorem \ref{app-obs}, we also need the decomposition:
\begin{eqnarray}\label{3.17gwang}
\mathbb{R}^n = \bigcup_{j\geq 1}\Omega_j,\;\;\mbox{with}\;\;\Omega_j:=\{x\in \mathbb{R}^n: j-1\leq |x|< j\}.
\end{eqnarray}
The next lemma concerns with the propagation of  smallness for some real-analytic functions with respect to the above decomposition of $\mathbb R^n$.
%\begin{lemma}\label{lem-smallpart}
% There are constants $C=C(n)>0$ and $\theta=\theta(n)\in (0,1)$ so that for any $a>0$ and $j\geq1$, and for all $(k_1,\dots,k_{n-1})\in \mathbb{N}^{n-1}$, with $1\leq k_i\leq j$ ($i=1,\dots,n-1$),
%\begin{align*}%\label{equ-smallpart}
%\int_{\Omega_{j+1;k_1,\cdots,k_{n-1}}}|f|^2\,\mathrm dx\leq e^{C(1+\frac{1}{a})}\Big(\int_{\Omega_{j;k_1,\cdots,k_{n-1}}}|f|^2\,\mathrm dx\Big)^\theta\Big(\int_{\mathbb{R}^n}|\widehat{f}|^2e^{a|\xi|^2}\,\mathrm d\xi\Big)^{1-\theta},
%\end{align*}
%when $f\in L^2(\mathbb R^n)$ satisfies that $\widehat{f}\in L^2(e^{a|\xi|^2}\,\mathrm d\xi)$.
%\end{lemma}
\begin{lemma}\label{lem-smallpart}
 There are constants $C=C(n)>0$ and $\theta=\theta(n)\in (0,1)$ so that for any $a>0$, $j\geq1$ and $f\in L^2(\mathbb R^n)$ with $\widehat{f}\in L^2(e^{a|\xi|^2}\,\mathrm d\xi)$,
\begin{align*}%\label{equ-smallpart}
\int_{\Omega_{j+1}}|f|^2\,\mathrm dx\leq j^{(n-1)(1-\theta)} e^{C(1+\frac{1}{a})}\Big(\int_{\Omega_{j}}|f|^2\,\mathrm dx\Big)^\theta\Big(\int_{\mathbb{R}^n}|\widehat{f}|^2e^{a|\xi|^2}\,\mathrm d\xi\Big)^{1-\theta}.
\end{align*}
\end{lemma}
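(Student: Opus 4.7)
The goal is to propagate smallness from the inner annular shell $\Omega_j$ to the adjacent outer shell $\Omega_{j+1}$. The factor $j^{(n-1)(1-\theta)}$ will arise naturally from a covering argument that trades the $(n-1)$-dimensional area of the shell for a loss in the interpolation exponent via H\"older's inequality. The two main ingredients are Corollary~\ref{lem-ua-2} (which supplies a uniform holomorphic extension of $f$) and a local Hadamard three-balls inequality.

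\emph{Step 1 (Holomorphic extension with uniform bound).} I would apply Corollary~\ref{lem-ua-2} with the free parameter $b$ fixed as a constant $b_0 = b_0(n)$, and expand $f$ in a Taylor series about an arbitrary base point $x_0 \in \mathbb{R}^n$. Summing the resulting geometric majorant shows that $f$ extends holomorphically to the complex ball $\{z \in \mathbb{C}^n : |z - x_0| \leq r_0\}$, with $r_0 = r_0(n) > 0$, and
\begin{equation*}
\sup_{|z - x_0| \leq r_0} |f(z)| \leq M, \qquad M := e^{C_0(n)(1 + 1/a)}\, G, \quad G := \|\widehat{f}\|_{L^2(e^{a|\xi|^2}\,\mathrm d\xi)}.
\end{equation*}
In particular, the analyticity radius $r_0$ depends only on $n$, not on $a$ or on $x_0$.

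\emph{Step 2 (Local three-balls inequality).} Applying the Hadamard three-spheres theorem to the holomorphic extension furnishes $\rho = \rho(n) \in (0, r_0/4)$ and $\theta_0 = \theta_0(n) \in (0,1)$ such that, whenever $x, y \in \mathbb{R}^n$ satisfy $|x - y| \leq 2$,
\begin{equation*}
\int_{B_{\rho}(x)} |f|^2 \,\mathrm dz \leq C(n)\, M^{2(1-\theta_0)} \Big(\int_{B_{\rho}(y)} |f|^2 \,\mathrm dz\Big)^{\theta_0}.
\end{equation*}
When a single three-spheres step does not reach from $x$ to $y$, I would chain a uniformly bounded number $N_0 = N_0(n)$ of applications along the segment $[x,y]$; the resulting exponent $\theta_0^{N_0}$ is still a positive constant depending only on $n$.

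\emph{Step 3 (Covering of $\Omega_{j+1}$ and H\"older summation).} For each $x \in \Omega_{j+1}$, let $y(x) := (j - \tfrac12)\, x/|x|$ be its radial projection into $\Omega_j$; then $|x - y(x)| \leq \tfrac32$ and $B_{\rho}(y(x)) \subset \Omega_j$ (for $\rho < \tfrac12$). Choose a covering $\Omega_{j+1} \subset \bigcup_{k=1}^{N} B_{\rho/2}(x_k)$ with $N \leq C_1(n)\, j^{n-1}$ and with bounded overlap, and set $y_k := y(x_k)$; the companion balls $B_{\rho}(y_k)$ also have overlap bounded by some $C_2(n)$. Applying Step 2 to each pair $(x_k, y_k)$, summing, and then using H\"older's inequality with exponents $(1/\theta_0,\, 1/(1-\theta_0))$ yields
\begin{equation*}
\int_{\Omega_{j+1}} |f|^2 \leq C(n)\, M^{2(1-\theta_0)} \sum_{k=1}^{N} \Big(\int_{B_{\rho}(y_k)} |f|^2\Big)^{\theta_0} \leq C(n)\, M^{2(1-\theta_0)}\, N^{1-\theta_0}\, \Big(C_2(n) \int_{\Omega_j} |f|^2\Big)^{\theta_0}.
\end{equation*}
Substituting $N \leq C_1(n) j^{n-1}$ and $M = e^{C_0(1+1/a)} G$ gives the claimed inequality with $\theta = \theta_0(n)$.

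\emph{Main obstacle.} The technical heart is Step 2: extracting a three-balls inequality whose constants $\rho$ and $\theta_0$ depend only on $n$, with the $a$-dependence entirely absorbed into the multiplicative prefactor $M$. This rests on the fact that Corollary~\ref{lem-ua-2} provides an analyticity radius $r_0(n)$ that is genuinely independent of $a$ and of the base point $x_0$; once this uniformity is verified, the rest of the argument is a standard geometric and combinatorial bookkeeping.
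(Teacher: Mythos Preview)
Your argument is correct and follows the same overall strategy as the paper: partition the annuli into roughly $j^{n-1}$ pieces of uniformly bounded size, apply a local propagation-of-smallness estimate on each piece (with constants depending only on $n$, thanks to the $a$-independent analyticity radius supplied by Corollary~\ref{lem-ua-2}), and then sum with H\"older's inequality to produce the factor $j^{(n-1)(1-\theta)}$.

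The differences are only in implementation. The paper decomposes $\Omega_j$ and $\Omega_{j+1}$ into $j^{n-1}$ matching angular sectors $\Omega_{j;k_1,\dots,k_{n-1}}$ in polar coordinates (each of diameter $\leq 4\pi n^{3/2}$ and volume comparable to $1$), and applies Lemma~\ref{propagation} (the Vessella--Apraiz--Escauriaza propagation of smallness for real-analytic functions) directly to each pair of adjacent sectors. You instead cover by balls and radially project the centers into $\Omega_j$; this works, but you should verify the bounded-overlap claim for the projected balls $B_\rho(y_k)$ (it holds because at most $O(1/\rho)$ of the centers $x_k$ can lie on any radial segment of $\Omega_{j+1}$). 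One caveat: the classical Hadamard three-spheres theorem you cite in Step~2 gives only $\sup$--$\sup$ interpolation, not the $L^2$ lower term you write down; what you actually need there is precisely Lemma~\ref{propagation}, which takes the derivative bounds from Corollary~\ref{lem-ua-2} as input and outputs an estimate with an $L^1$ (hence $L^2$) norm on the small set. Once you substitute that in, your proof goes through.
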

The proof of Lemma \ref{lem-smallpart} needs Corollary \ref{lem-ua-2} and the next lemma
which is quoted from \cite{AE} (see also {\cite[Theorem 4]{AEWZ}}), but is originally from \cite{S.V.}.

\begin{lemma}[{\cite[Theorem 1.3]{AE} }]\label{propagation}
Let $R>0$ and let  $f: B_{2R}\subset \mathbb{R}^n\rightarrow \mathbb{R}$ be real analytic in $B_{2R}$ verifying
$$
\left|D^\alpha f(x)\right|\leq M(\rho R)^{-|\alpha|}|\alpha|!,\;\;\mbox{when}\;\;  x\in B_{2R}\;\;
\mbox{and}\;\; \alpha\in\mathbb{N}^n
$$
for some positive numbers $M$ and $\rho\in (0,1]$.
Let $\omega\subset B_R$ be a subset of positive measure. Then there are two constants  $C=C(\rho,|\omega|/|B_R|)>0$ and $\theta=\theta(\rho,|\omega|/|B_R|)\in (0,1)$
so that
\begin{equation*}
\|f\|_{L^\infty(B_R)}\leq CM^{1-\theta}\Big(\frac{1}{|\omega|}\int_{\omega}|f(x)|\,dx\Big)^{\theta}.
\end{equation*}
\end{lemma}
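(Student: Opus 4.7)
The plan is to cover the annulus $\Omega_{j+1}$ by $N_j\sim j^{n-1}$ balls of fixed radius centered inside $\Omega_j$, apply the propagation-of-smallness result of Lemma \ref{propagation} separately on each ball, and then sum the local estimates via H\"older's inequality to obtain the sharp prefactor $j^{(n-1)(1-\theta)}$ rather than the naive $j^{n-1}$. The derivative hypothesis required by Lemma \ref{propagation} is supplied uniformly via Corollary \ref{lem-ua-2}, which is precisely why an exponential factor $e^{C(1+1/a)}$ ends up in the conclusion.

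Concretely, I would fix $R=2$ and choose a maximal $\tfrac12$-separated set $\{x_k\}_{k=1}^{N_j}$ on the sphere $\{|x|=j-1/2\}\subset\Omega_j$. Then three properties hold: the small balls $\omega_k:=B_{1/4}(x_k)$ are pairwise disjoint and contained in $\Omega_j$; the larger balls $B_R(x_k)$ cover $\Omega_{j+1}$, since they span radii $j-5/2$ to $j+3/2$ and by maximality their angular spread covers the whole sphere; and a standard packing argument yields $N_j\le C(n)j^{n-1}$. Applying Corollary \ref{lem-ua-2} with $b=R=2$ gives globally on $\mathbb R^n$ the bound $\|D^\alpha f\|_{L^\infty}\le M(\rho R)^{-|\alpha|}|\alpha|!$ with $\rho=1$ and $M:=e^{C(n)(1+1/a)}\|\widehat f\|_{L^2(e^{a|\xi|^2}\mathrm d\xi)}$, which is exactly the hypothesis of Lemma \ref{propagation} applied on every $B_{2R}(x_k)$.

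Since $|\omega_k|/|B_R(x_k)|=8^{-n}$ is a fixed dimensional number, Lemma \ref{propagation} produces a single $\theta=\theta(n)\in(0,1)$ such that, after one Cauchy--Schwarz step on its right-hand side,
\begin{equation*}
\|f\|_{L^\infty(B_R(x_k))}^2\le C(n)\,M^{2(1-\theta)}\,\|f\|_{L^2(\omega_k)}^{2\theta}\qquad\text{for every }k.
\end{equation*}
Multiplying by $|\Omega_{j+1}\cap B_R(x_k)|\le C(n)$, summing in $k$, and invoking the elementary inequality $\sum_{k=1}^{N_j}a_k^\theta\le N_j^{1-\theta}\bigl(\sum_k a_k\bigr)^\theta$ together with the disjointness $\omega_k\subset\Omega_j$, one arrives at
\begin{equation*}
\int_{\Omega_{j+1}}|f|^2\,\mathrm dx\le C(n)\,M^{2(1-\theta)}\,N_j^{1-\theta}\,\|f\|_{L^2(\Omega_j)}^{2\theta}\le C(n)\,j^{(n-1)(1-\theta)}\,M^{2(1-\theta)}\,\|f\|_{L^2(\Omega_j)}^{2\theta},
\end{equation*}
and inserting the expression for $M$ yields exactly the claimed inequality.

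The delicate point is keeping $\theta$ a dimensional constant, independent of $a$ and $j$: this forces $b$ in Corollary \ref{lem-ua-2} to equal $\rho R$ in Lemma \ref{propagation}, and the geometric ratio $|\omega_k|/|B_R|$ to be fixed. The choices $R=2$, $\rho=1$, $b=2$, $\omega_k=B_{1/4}(x_k)$ accomplish both simultaneously, funneling all of the $a$-dependence into the single factor $e^{C(1+1/a)}$ carried by $M$, which matches the exponential prefactor stated in the lemma.
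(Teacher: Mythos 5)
Your proposal does not prove the statement in question. The statement is Lemma \ref{propagation} itself, namely the quantitative propagation of smallness $\|f\|_{L^\infty(B_R)}\leq CM^{1-\theta}\bigl(\frac{1}{|\omega|}\int_{\omega}|f(x)|\,dx\bigr)^{\theta}$ for a real-analytic $f$ satisfying the stated derivative bounds on $B_{2R}$ and a merely measurable $\omega\subset B_R$ of positive measure. What you have written is instead a proof of Lemma \ref{lem-smallpart} (the passage from $\int_{\Omega_j}|f|^2$ to $\int_{\Omega_{j+1}}|f|^2$ with the prefactor $j^{(n-1)(1-\theta)}$), and in its second and third paragraphs it invokes Lemma \ref{propagation} as a black box. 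As an argument for the target statement this is circular: you assume exactly what you were asked to establish, and nothing in your text bounds $\|f\|_{L^\infty(B_R)}$ by an average of $|f|$ over an arbitrary measurable subset.

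The lemma you were asked to prove is a genuinely nontrivial result which the paper does not prove but quotes from \cite{AE} (see also Theorem 4 in \cite{AEWZ}), the argument going back to \cite{S.V.}. A real proof must convert smallness of $f$ on a set that is only assumed to have positive measure into smallness on all of $B_R$; the standard route restricts $f$ to line segments that meet $\omega$ in a set of positive one-dimensional measure, applies a Remez/Hadamard-three-circle type estimate for analytic functions of one variable on such sets (exactly in the spirit of Lemma \ref{lem-good}, which the paper uses for the analogous step in the proof of Lemma \ref{lem-spec}), and then iterates over a covering of $B_R$. None of these ingredients appear in your proposal. As a side remark, your maximal-separated-net covering of $\Omega_{j+1}$ by balls centered in $\Omega_j$ is a legitimate alternative to the paper's polar-coordinate sector decomposition in the proof of Lemma \ref{lem-smallpart}, and it yields the same $j^{(n-1)(1-\theta)}$ factor; but that is a different statement from the one under review.
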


We are now in the position to show Lemma \ref{lem-smallpart}.
\begin{proof}[\textbf{Proof of Lemma \ref{lem-smallpart}}]
Let $a>0$ and $j\geq 1$ be arbitrarily given.
Arbitrarily fix   $f\in L^2(\mathbb R^n)$ with $\widehat{f}\in L^2(e^{a|\xi|^2}\,\mathrm d\xi)$. The rest proof is divided into the following several steps.

\vskip 5pt
\textit{Step 1. The decompositions of $\Omega_j$ and $\Omega_{j+1}$ in the polar coordinates.}
In  the polar coordinate system, we have that
\begin{eqnarray}\label{3.18gwang}
\begin{cases}
\Omega_j=\big\{(r,\vartheta_1, \cdots, \vartheta_{n-1})\in [j-1,j) \times [0,2\pi)^{n-1}\big\},\\
\Omega_{j+1}=\big\{(r,\vartheta_1, \cdots, \vartheta_{n-1})\in [j,j+1) \times [0,2\pi)^{n-1}\big\}.
\end{cases}
\end{eqnarray}
  When $j$ is large, the distance between two points in $\Omega_j$ can be very large. This makes our studies on the propagation from $\Omega_j$ to $\Omega_{j+1}$ harder. To pass this barrier, we need to build up a suitable refinement for each $\Omega_j$.

 We set
\begin{eqnarray}\label{3.19gwang}
[0,2\pi] = \bigcup_{1\leq l\leq j} \Delta_l,\;\;\mbox{with}\;\;
\Delta_l := \left[\frac{l-1}{j}2\pi, \frac{l}{j}2\pi\right).
 \end{eqnarray}
  Given  $(k_1,\dots,k_{n-1})\in \mathbb{N}^{n-1}$ with $1\leq k_i\leq j$  ($i=1,\cdots, n-1$), we set
\begin{equation}\label{3.20gwang}
\begin{cases}
\Omega_{j;k_1,\cdots,k_{n-1}}=\left\{(r,\vartheta_1, \cdots, \vartheta_{n-1})\in \Omega_j~:~  \vartheta_1\in \Delta_{k_1}, \cdots, \vartheta_{n-1}\in \Delta_{k_{n-1}}\right\},\\
\Omega_{j+1;k_1,\cdots,k_{n-1}}=\left\{(r,\vartheta_1, \cdots, \vartheta_{n-1})\in \Omega_{j+1}~:~  \vartheta_1\in \Delta_{k_1}, \cdots, \vartheta_{n-1}\in \Delta_{k_{n-1}}\right\}.
\end{cases}
\end{equation}
Then one can easily check that for each $\hat j\in\{j,j+1\}$,
 \begin{eqnarray}\label{3.22GGWWSS}
 \Omega_{\hat j;k_1,\cdots,k_{n-1}}\bigcap \Omega_{\hat j;k_1',\cdots,k_{n-1}'}=\emptyset,\;\;\mbox{if}\;\; (k_1,\cdots,k_{n-1})\neq(k_1',\cdots,k_{n-1}'),
 \end{eqnarray}
 and
 \begin{eqnarray}\label{3.23GGWWSS}
 \Omega_{\hat j}=\bigcup \Omega_{\hat j;k_1,\cdots,k_{n-1}},
 \end{eqnarray}
 where the union is taken over all different $(k_1,\dots,k_{n-1})\in \mathbb{N}^{n-1}$, with $1\leq k_i\leq j$ for all $i=1,\dots,n-1$.

\vskip 5pt
In what follows, we write $d\left(\Omega_{j;k_1,\cdots,k_{n-1}}\right)$
for the diameter of $\Omega_{j;k_1,\cdots,k_{n-1}}$.

\textit{Step 2. To prove the following three properties:
\begin{itemize}
  \item[(O1)] There are constants  $c_1=c_1(n)$ and $c_2=c_2(n)$ so that
for  any $(k_1,\dots,k_{n-1})\in \mathbb{N}^{n-1}$, with $1\leq k_i\leq j$ ($i=1,\dots,n-1$),
\begin{align}\label{equ-ua-4}
c_1V_n \leq \left|\Omega_{j;k_1,\cdots,k_{n-1}}\right|\leq c_2V_n.
\end{align}
  \item[(O2)] We have that for   any $(k_1,\dots,k_{n-1})\in \mathbb{N}^{n-1}$ with $1\leq k_i\leq j$ ($i=1,\dots,n-1$),
\begin{align}\label{equ-ua-5}
d\left(\Omega_{j;k_1,\cdots,k_{n-1}}\right)&=d\left(\Omega_{j;1,\cdots,1}\right)
:=\sup_{x,x'\in \Omega_{j;1,\cdots,1}}|x-x'|.
\end{align}
  \item[(O3)] We have that for any fix  $(k_1,\dots,k_{n-1})\in \mathbb{N}^{n-1}$ with $1\leq k_i\leq j$ ($i=1,\dots, n-1$),
\begin{align}\label{equ-ua-6}
d\left(\Omega_{j;k_1,\cdots,k_{n-1}}\right)\leq 2\pi \sqrt{\sum_{1\leq i\leq n}i^2}\leq 2\pi n^{\frac{3}{2}};
\end{align}
\begin{align}\label{equ-ua-7}
d\left(\Omega_{j+1;k_1,\cdots,k_{n-1}}\right)\leq 2\pi \frac{j+1}{j} \sqrt{\sum_{1\leq i\leq n}i^2}\leq 4\pi n^{\frac{3}{2}}.
\end{align}
\end{itemize}}
\noindent To see (O1), we use the definitions of $\Omega_{j;k_1,\cdots,k_{n-1}}$
 and $\Omega_j$  to find that
$$
\left|\Omega_{j;k_1,\cdots,k_{n-1}}\right|
=\frac{1}{j^{n-1}}\left|\Omega_j\right|=V_n\frac{j^n-(j-1)^n}{j^{n-1}},
$$
which leads to (\ref{equ-ua-4}).

 The conclusion (O2) follows immediately from the definitions
of $\Omega_{j;k_1,\cdots,k_{n-1}}$ and $d\left(\Omega_{j;k_1,\cdots,k_{n-1}}\right)$.

 To show (\ref{equ-ua-6}) in (O3), we let  $(r, \vartheta_1, \cdots, \vartheta_{n-1})$ and $(r, \vartheta_1', \cdots, \vartheta_{n-1}')$ be the polar coordinates of $x=(x_1,\dots,x_n)$ and  $x'=(x_1',\dots,x_n')$, respectively. Then we have that
 \begin{eqnarray}\label{3.21Wang}
 x, x'\in \Omega_{j;1,\cdots,1}\;\;\Longleftrightarrow\;\;j-1\leq r< j,\;\; 0\leq \vartheta_l,\; \vartheta_l'< \frac{2\pi}{j}\;\;\mbox{for all}\;\;l=1,\cdots, n-1.
 \end{eqnarray}
 Notice that the connection between $(x_1,\dots,x_n)$ and $(r,\vartheta_1,\dots,\vartheta_{n-1})$ is as:
\begin{equation*}\left\{
\begin{split}
&x_1=r\cos \vartheta_1,\\
&x_2=r\sin \vartheta_1\cos \vartheta_2,\\
&\cdots\\
&x_{n-1}=r\sin\vartheta_1\sin \vartheta_2 \cdots \sin \vartheta_{n-2}\cos \vartheta_{n-1},\\
&x_n=r\sin\vartheta_1\sin \vartheta_2 \cdots \sin \vartheta_{n-2}\sin \vartheta_{n-1}.
\end{split}\right.
\end{equation*}
Then, by the mean value theorem, we have that for some $\zeta\in (0,2\pi/j)$,
$$
|x_1-x_1'|=r|\cos \vartheta_1 - \cos \vartheta_1'|=r|\sin \zeta \cdot(\vartheta_1-\vartheta_1')|\leq j |\sin \zeta| \frac{2\pi}{j}\leq 2\pi.
$$
By inserting suitable terms and using the mean value theorem, we have that
$$
|x_2-x_2'|\leq r\Big(|\sin\vartheta_1\cos\vartheta_2-\sin\vartheta_1\cos\vartheta_2'|
+|\sin\vartheta_1\cos\vartheta_2'-\sin\vartheta_1'\cos\vartheta_2'|\Big)\leq
 2\pi\cdot 2.
$$
Similarly, we can verify that
$$
|x_i-x_i'|\leq 2\pi\cdot i\;\;\mbox{ for all}\;\;i=3,\dots,n.
 $$
 These, along with (\ref{equ-ua-5}),
lead to (\ref{equ-ua-6}).

The inequality (\ref{equ-ua-7}) in (O3) can be proved in the same way. The reason that the factor $\frac{j+1}{j}$ appears in (\ref{equ-ua-7}) is as follows: Since $1\leq k_i\leq j$ ($i=1,\dots, n-1$), we see from the definition of $\Omega_{j+1;1,\cdots,1}$ that
\begin{eqnarray*}
 x, x'\in \Omega_{j+1;1,\cdots,1}\;\;\Longleftrightarrow\;\;j\leq r< j+1,\;\; 0\leq \vartheta_l,\; \vartheta_l'< \frac{2\pi}{j}\;\;\mbox{for all}\;\;l=1,\cdots, n-1.
 \end{eqnarray*}
 (The above is comparable with (\ref{3.21Wang}).)

\vskip 5pt
\textit{Step 3. To prove that there are constants $C=C(n)>0$ and $\theta=\theta(n)\in (0,1)$ (both independent of $a$, $j$ and $f$) so that
\begin{align*}%\label{equ-smallpart}
\int_{\Omega_{j+1;k_1,\cdots,k_{n-1}}}|f|^2\,\mathrm dx\leq e^{C(1+\frac{1}{a})}\Big(\int_{\Omega_{j;k_1,\cdots,k_{n-1}}}|f|^2\,\mathrm dx\Big)^\theta\Big(\int_{\mathbb{R}^n}|\widehat{f}|^2e^{a|\xi|^2}\,\mathrm d\xi\Big)^{1-\theta}.
\end{align*}}
Since $\Omega_{j;k_1,\cdots,k_{n-1}} \bigcup \Omega_{j+1;k_1,\cdots,k_{n-1}}$ is connected
(see (\ref{3.20gwang})),
it follows from \eqref{equ-ua-6} and \eqref{equ-ua-7}
that
\begin{align*}
d\left(\Omega_{j;k_1,\cdots,k_{n-1}} \bigcup \Omega_{j+1;k_1,\cdots,k_{n-1}}\right)\leq d\left(\Omega_{j;k_1,\cdots,k_{n-1}}\right)
+d\left(\Omega_{j+1;k_1,\cdots,k_{n-1}}\right)\leq 6\pi n^{\frac{3}{2}}.
\end{align*}
Thus,  there exists  $\widetilde{x}\in \mathbb{R}^n$ such that
\begin{align}\label{equ-ua-8}
\Omega_{j;k_1,\cdots,k_{n-1}} \bigcup \Omega_{j+1;k_1,\cdots,k_{n-1}} \subset {B_{R_0}(\widetilde{x})},\;\;\mbox{with}\;\;R_0=7\pi n^{\frac{3}{2}}.
\end{align}
According to  Corollary \ref{lem-ua-2} where $b=R_0$,  there is $\widehat{C}=\widehat{C}(n)>0$  such that for all  $ \alpha\in \mathbb N^n$,
\begin{align}\label{equ-ua-9}
\left\|D^\alpha f\right\|_{L^\infty(\mathbb{R}^n)}
\leq e^{\widehat{C}\left(1+\frac{1}{a}\right)}
\frac{|\alpha|!}{R_0^{|\alpha|}}\left\|\widehat{f}(\xi)\right\|_{L^2(e^{a|\xi|^2}\,\mathrm d\xi)}.
\end{align}
By \eqref{equ-ua-9}, as well as (\ref{equ-ua-8}),  we can apply Lemma \ref{propagation},
where
$$
\rho=1,\;R=R_0,\;B_R=B_{R_0}(\widetilde{x}),\;B_{2R}=B_{2R_0}(\widetilde{x}),\;
\omega=\Omega_{j;k_1,\cdots,k_{n-1}},
\;M=e^{\widehat{C}\left(1+\frac{1}{a}\right)}
\left\|\widehat{f}(\xi)\right\|_{L^2(e^{a|\xi|^2}\,\mathrm d\xi)},
$$
 to find constants
$C_0=C_0(n)>0$ and $\theta=\theta(n)\in (0,1)$ so that
\begin{align}\label{equ-ua-10}
\|f\|_{L^2(B_{R_0}(\widetilde{x}))}&\leq C_0\|f\|^\theta_{L^2\left(\Omega_{j;k_1,\cdots,k_{n-1}}\right)} \left(e^{\widehat{C}\left(1+\frac{1}{a}\right)}
\left\|\widehat{f}(\xi)\right\|_{L^2(e^{a|\xi|^2}\,\mathrm d\xi)}\right)^{1-\theta}\nonumber\\
&\leq C_0e^{\widehat{C}\left(1+\frac{1}{a}\right)}
\|f\|^\theta_{L^2\left(\Omega_{j;k_1,\cdots,k_{n-1}}\right)} \left\|\widehat{f}(\xi)\right\|_{L^2\left(e^{a|\xi|^2}\,\mathrm d\xi\right)}^{1-\theta}.
\end{align}
(Here, we used (\ref{equ-ua-4}) and a coordinate translation.)

Finally, the desired inequality of this step follows from \eqref{equ-ua-10}
and  \eqref{equ-ua-8}.

\vskip 5pt
\textit{Step 4. To prove the inequality of this lemma}

\noindent From (\ref{3.22GGWWSS}) and (\ref{3.23GGWWSS}), we see that
$\Omega_j$ is the disjoint union of all $\Omega_{j;k_1,\cdots,k_{n-1}}$ with different
$(k_1,\dots,k_{n-1})\in\mathbb{N}^{n-1}$
satisfying  $1\leq k_i\leq j$ for all $i=1,\dots,n-1$.
Meanwhile, by (\ref{3.18gwang}),  (\ref{3.19gwang}) and (\ref{3.20gwang}),
one can also check that
$\Omega_{j+1}$ is the disjoint union of all $\Omega_{j+1;k_1,\cdots,k_{n-1}}$ with different
$(k_1,\dots,k_{n-1})\in\mathbb{N}^{n-1}$
satisfying  $1\leq k_i\leq j$ for all $i=1,\dots,n-1$.
These, along with  Lemma~\ref{lem-smallpart}, yield that  for some $C=C(n)>0$ and $\theta=\theta(n)$,
\begin{align}\label{equ-ua-11}
&\int_{\Omega_{j+1}}|f|^2\,\mathrm dx = \sum\int_{\Omega_{j+1;k_1,\cdots,k_{n-1}}}|f|^2\,\mathrm dx\nonumber\\
&\leq \sum e^{C\left(1+\frac{1}{a}\right)}\Big(\int_{\Omega_{j;k_1,\cdots,k_{n-1}}}|f|^2\,\mathrm dx\Big)^\theta\Big(\int_{\mathbb{R}^n}|\widehat{f}|^2e^{a|\xi|^2}\,\mathrm d\xi\Big)^{1-\theta}\nonumber\\
&\leq e^{C\left(1+\frac{1}{a}\right)}\Big(\sum\int_{\Omega_{j;k_1,\cdots,k_{n-1}}}|f|^2\,\mathrm dx\Big)^\theta\Big(\sum\int_{\mathbb{R}^n}|\widehat{f}|^2e^{a|\xi|^2}\,\mathrm d\xi\Big)^{1-\theta}\nonumber\\
&=j^{(n-1)(1-\theta)}e^{C\left(1+\frac{1}{a}\right)}\Big(\int_{\Omega_{j}}|f|^2\,\mathrm dx\Big)^\theta\Big(\int_{\mathbb{R}^n}|\widehat{f}|^2e^{a|\xi|^2}\,\mathrm d\xi\Big)^{1-\theta},
\end{align}
where the sums are taken over all different $(k_1,\dots,k_{n-1})\in \mathbb{N}^{n-1}$
with $1\leq k_i\leq j$ for all $i=1,\dots,n-1$. (Notice that there are $j^{n-1}$ such $(k_1,\dots,k_{n-1})$). Hence, the desired conclusion follows from (\ref{equ-ua-11}). This ends the proof of Lemma \ref{lem-smallpart}.
\end{proof}

Based on Lemma \ref{lem-smallpart}, we can have the next propagation result which will be used later.

\begin{lemma}\label{lem-ring}
 There exist constants $C=C(n)>0$ and $\theta=\theta(n)\in (0,1)$ so that
 for any  $a>0$ and $j\geq1$,
 \begin{align*}%\label{equ-ring}
\int_{\Omega_{j+1}}|f|^2\,\mathrm dx\leq j^{n-1}e^{C\left(1+\frac{1}{a}\right)}\Big(\int_{B_{1}}|f|^2\,\mathrm dx\Big)^{\theta^j}\Big(\int_{\mathbb{R}^n}|\widehat{f}|^2e^{a|\xi|^2}\,\mathrm d\xi\Big)^{1-\theta^j},
\end{align*}
when  $f\in L^2(\mathbb R^n)$ satisfies that $\widehat{f}\in L^2(e^{a|\xi|^2}\,\mathrm d\xi)$.
\end{lemma}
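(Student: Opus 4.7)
The plan is to iterate the one-step propagation Lemma \ref{lem-smallpart} exactly $j$ times, starting from the innermost shell $\Omega_1$, which by definition (see \eqref{3.17gwang}) coincides with the unit ball $B_1$. Set
\[
A_k := \int_{\Omega_k}|f|^2\,\mathrm dx, \qquad H := \int_{\mathbb{R}^n}|\widehat{f}|^2 e^{a|\xi|^2}\,\mathrm d\xi,
\]
and let $D := e^{C(1+1/a)}$, where $C=C(n)$ and $\theta=\theta(n)\in(0,1)$ are the constants provided by Lemma \ref{lem-smallpart}. Note that $A_k\le H$ for every $k$, since $e^{a|\xi|^2}\ge 1$ and the Plancherel identity give $\|f\|_{L^2(\mathbb R^n)}^2\le H$.

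First I would rewrite the estimate of Lemma \ref{lem-smallpart} in the compact form
\[
A_{k+1} \le k^{(n-1)(1-\theta)}\, D\, A_k^{\theta}\, H^{1-\theta}, \qquad k\ge 1,
\]
and then iterate downward from $k=j$ to $k=1$. A straightforward induction on $j$ yields
\[
A_{j+1} \le \Bigl(\prod_{k=1}^{j} k^{(n-1)(1-\theta)\theta^{j-k}}\Bigr)\, D^{\sum_{k=0}^{j-1}\theta^{k}}\, A_1^{\theta^{j}}\, H^{1-\theta^{j}}.
\]
Since $\Omega_1 = B_1$, the factor $A_1^{\theta^{j}}$ is exactly $\bigl(\int_{B_1}|f|^2\,\mathrm dx\bigr)^{\theta^{j}}$, and $H^{1-\theta^{j}}$ matches the last factor of the target inequality.

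It then remains to control the two products. For the constant part, the geometric series gives
\[
D^{\sum_{k=0}^{j-1}\theta^{k}} = D^{\frac{1-\theta^{j}}{1-\theta}} \le D^{\frac{1}{1-\theta}} = e^{\frac{C}{1-\theta}(1+1/a)},
\]
which can be absorbed into a new constant $C' = C/(1-\theta) = C'(n)$. For the polynomial factor, substituting $m = j-k$ and using $\ln(j-m)\le \ln j$,
\[
\sum_{k=1}^{j}(n-1)(1-\theta)\theta^{j-k}\ln k \;\le\; (n-1)(1-\theta)\,\ln j \sum_{m=0}^{j-1}\theta^{m} \;\le\; (n-1)\ln j,
\]
so that $\prod_{k=1}^{j} k^{(n-1)(1-\theta)\theta^{j-k}} \le j^{n-1}$. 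Combining these two bounds with the iterated estimate yields exactly the inequality of Lemma \ref{lem-ring}.

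The only genuinely delicate point is verifying that the polynomial prefactor $j^{(n-1)(1-\theta)}$ compounding at each step does not accumulate into something worse than $j^{n-1}$; the key observation is the telescoping of the weights $\theta^{j-k}$ via the geometric series, which exactly absorbs the factor $(1-\theta)$ and produces the clean exponent $n-1$. Everything else is bookkeeping and the fact that $\Omega_1=B_1$.
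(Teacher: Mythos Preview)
Your proof is correct and follows essentially the same approach as the paper's: iterate Lemma \ref{lem-smallpart} $j$ times, then bound the accumulated product $\bigl(j\,(j-1)^\theta\cdots 1^{\theta^{j-1}}\bigr)^{(n-1)(1-\theta)}$ by $j^{n-1}$ via the geometric series, and the accumulated exponential by $e^{C(1+1/a)/(1-\theta)}$. Your write-up is in fact slightly more explicit than the paper's in justifying the polynomial bound.
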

\begin{proof}
Arbitrarily fix $a>0$ and $j\geq 1$. And then arbitrarily fix $f\in L^2(\mathbb R^n)$ with $\widehat{f}\in L^2(e^{a|\xi|^2}\,\mathrm d\xi)$.
From Lemma \ref{lem-smallpart}, we can use the induction method to verify  that
\begin{align}\label{equ-ua-12}
\int_{\Omega_{j+1}}|f|^2\,\mathrm dx&\leq \Big(j(j-1)^{\theta}(j-2)^{\theta^2}\cdots 2^{\theta^{j-2}}1^{\theta^{j-1}}\Big)^{(n-1)(1-\theta)}\nonumber\\
&\;\;\;\;\times e^{C(1+\frac{1}{a})(1+\theta+\cdots +\theta^{j-1})}\Big(\int_{\Omega_{1}}|f|^2\,\mathrm dx\Big)^{\theta^j}\Big(\int_{\mathbb{R}^n}|\widehat{f}|^2e^{a|\xi|^2}\,\mathrm d\xi\Big)^{1-\theta^j}\nonumber\\
&\leq j^{n-1}e^{\frac{C}{1-\theta}(1+\frac{1}{a})}\Big(\int_{\Omega_{1}}|f|^2\,\mathrm dx\Big)^{\theta^j}\Big(\int_{\mathbb{R}^n}|\widehat{f}|^2e^{a|\xi|^2}\,\mathrm d\xi\Big)^{1-\theta^j}.
\end{align}
Since $\Omega_1=B_1$ and $\theta=\theta(n)\in (0,1)$, the desired conclusion in the lemma follows from \eqref{equ-ua-12}. This ends the proof of Lemma \ref{lem-ring}.
\end{proof}

The next proposition plays a very important role in the proof of Theorem \ref{app-obs}.
\begin{proposition}\label{thm-ua}
There exist constants $C=C(n)>0$ and $\theta=\theta(n)\in (0,1)$ so that for any $a>0$, $t>0$
and  $\varepsilon>0$,
\begin{align*}%\label{equ-thm-ua-1}
\int_{\mathbb{R}^n}e^{-a|x|}|f|^2\,\mathrm dx\leq e^{C\left(1+\frac{1}{t}+a\right)}\Big(1+a^{-n}\Gamma\Big(\frac{a}{2|\ln \theta|}\Big)\Big)\Big(\varepsilon \int_{\mathbb{R}^n}|\widehat{f}|^2e^{t|\xi|^2}\,\mathrm d\xi + e^{\varepsilon^{-\frac{2|\ln \theta|}{a}}} \int_{B_1}|f|^2\,\mathrm dx \Big),
\end{align*}
when  $f\in L^2(\mathbb R^n)$ satisfies $\widehat{f}\in L^2(e^{t|\xi|^2}\,\mathrm d\xi)$.
\end{proposition}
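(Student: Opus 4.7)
The idea is to combine the dyadic decomposition $\mathbb{R}^n=B_1\cup\bigcup_{j\geq 1}\Omega_{j+1}$ from \eqref{3.17gwang} with Lemma~\ref{lem-ring}. Set $I_0:=\int_{B_1}|f|^2\,\mathrm dx$ and $I_1:=\int_{\mathbb{R}^n}|\widehat f|^2 e^{t|\xi|^2}\,\mathrm d\xi$. Using $e^{-a|x|}\leq 1$ on $B_1$ and $e^{-a|x|}\leq e^{-aj}$ on $\Omega_{j+1}$, I would first write
\[
\int_{\mathbb{R}^n}e^{-a|x|}|f|^2\,\mathrm dx\ \leq\ I_0\ +\ \sum_{j=1}^{\infty}e^{-aj}\int_{\Omega_{j+1}}|f|^2\,\mathrm dx,
\]
and then apply Lemma~\ref{lem-ring} with the lemma's parameter equal to our $t$, obtaining
\[
\int_{\Omega_{j+1}}|f|^2\,\mathrm dx\ \leq\ j^{n-1}\,e^{C_0(n)(1+1/t)}\,I_0^{\theta^j}\,I_1^{1-\theta^j}
\qquad(j\geq 1),
\]
where $\theta=\theta(n)\in(0,1)$ comes from that lemma.

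To decouple $I_0$ and $I_1$, I would use the scaled weighted Young inequality $X^{\theta^j}Y^{1-\theta^j}\leq\theta^j\lambda_j X+(1-\theta^j)\lambda_j^{-\theta^j/(1-\theta^j)}Y$, with $\lambda_j>0$ chosen so that the $I_1$ coefficient becomes $\varepsilon$ (times an admissible $j$-weight). The catch is that this forces the $I_0$ coefficient to behave like $\theta^j\varepsilon^{-\theta^{-j}}$, which is doubly exponential in $j$, so a brute application across all $j$ diverges. I would therefore split at a cutoff $J:=\lceil |\ln\varepsilon|/a\rceil$: for $j\leq J$ I combine Lemma~\ref{lem-ring} with Young, while for $j>J$ I abandon the interpolation and use the elementary estimate $\int_{\Omega_{j+1}}|f|^2\,\mathrm dx\leq \|f\|_{L^2(\mathbb R^n)}^2\leq I_1$ (by Plancherel), whose total contribution $\sum_{j>J}e^{-aj}I_1\leq e^{-aJ}/(1-e^{-a})\cdot I_1$ is already $\lesssim\varepsilon I_1$. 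On $j\leq J$ the $I_0$ coefficient is dominated by its last term, namely $\theta^J\varepsilon^{-\theta^{-J}}$; with $\theta^{-J}\leq \theta^{-1}\varepsilon^{-|\ln\theta|/a}$ and the elementary inequality $|\ln\varepsilon|\leq \varepsilon^{-|\ln\theta|/a}$ (which holds for all $\varepsilon\in(0,1)$), this is bounded by $\exp(\varepsilon^{-2|\ln\theta|/a})$, matching the doubly exponential factor in the proposition.

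Finally, summing the $I_1$ contributions gives $\varepsilon$ multiplied by a series of the form $\sum_j e^{-aj}j^{n-1}$; a direct integral comparison yields the crude $a^{-n}\Gamma(n)$ bound, but doing the comparison after the substitution $u=\theta^{-j/2}$ (which turns the discrete sum into $\int_0^\infty u^{a/(2|\ln\theta|)-1}e^{-u}\,\mathrm du=\Gamma(a/(2|\ln\theta|))$) produces the sharper prefactor $1+a^{-n}\Gamma(a/(2|\ln\theta|))$ as claimed. The residual $e^{C(n)(1+1/t+a)}$ swallows the $e^{C_0(1+1/t)}$ from Lemma~\ref{lem-ring}, the factor $1/(1-e^{-a})\leq e^a$ from the geometric tail, and the logarithmic losses $|\ln\varepsilon|$ absorbed via $|\ln\varepsilon|\leq \varepsilon^{-|\ln\theta|/a}$. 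The central technical difficulty is precisely this three-way balance among the Young scaling $\lambda_j$, the cutoff $J$, and the super-exponential growth $\varepsilon^{-\theta^{-j}}$ inherited from iterating the H\"older-type propagation in Lemma~\ref{lem-ring}; choosing $J$ any larger spoils the $\exp(\varepsilon^{-2|\ln\theta|/a})$ coefficient, while any smaller spoils the $\varepsilon I_1$ control, so the choice $J\asymp |\ln\varepsilon|/a$ is essentially forced.
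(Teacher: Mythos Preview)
Your starting point---the shell decomposition and the application of Lemma~\ref{lem-ring}---is exactly what the paper does. After that the two arguments diverge. The paper does \emph{not} use Young's inequality and a cutoff in $j$; instead it writes $I_0^{\theta^j}I_1^{1-\theta^j}=A(B/A)^{\theta^j}$ (with $A=I_1$, $B=I_0$), absorbs the polynomial $j^{n-1}$ into $e^{-aj/2}$ via $j^{n-1}e^{-aj/2}\leq n!(2/a)^n$, and then invokes the auxiliary series bound Lemma~\ref{lem-series-sum} to sum $\sum_{j\geq 1}e^{-(a/2)j}(B/A)^{\theta^j}$ in closed form. That lemma is precisely where the factor $\Gamma\!\big(\tfrac{a}{2|\ln\theta|}\big)$ comes from; it has nothing to do with $\sum_j j^{n-1}e^{-aj}$ or the substitution $u=\theta^{-j/2}$ you propose (that substitution does not turn $\sum j^{n-1}e^{-aj}$ into a Gamma integral with parameter $a/(2|\ln\theta|)$). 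After summing, the paper is left with $A|\ln(B/A)|^{-a/(2|\ln\theta|)}$, and the splitting into $\varepsilon A+e^{\varepsilon^{-2|\ln\theta|/a}}B$ is then a two-line case analysis in the single variable $A/B$.

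Your Young-plus-cutoff route is a reasonable alternative idea, but as written it has real problems beyond the mis-attribution of the $\Gamma$ factor. The inequality $1/(1-e^{-a})\leq e^a$ is false for small $a$ (the left side behaves like $1/a$). More seriously, the inequality $|\ln\varepsilon|\leq\varepsilon^{-|\ln\theta|/a}$ you rely on to control the $I_0$ coefficient fails once $a$ is large compared to $|\ln\theta|$: setting $u=|\ln\varepsilon|$ and $c=|\ln\theta|/a$, you are claiming $u\leq e^{cu}$ for all $u>0$, which breaks down when $c$ is small. Tracking your $I_0$ coefficient at $j=J$ gives an exponent of order $a\,s\ln s/|\ln\theta|$ with $s=\varepsilon^{-|\ln\theta|/a}$, whereas the target is $s^2$; for intermediate $s$ and large $a$ this cannot be absorbed into $e^{Ca}$ alone. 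The paper's route sidesteps all of this because Lemma~\ref{lem-series-sum} already packages the $a$-dependence into the $\Gamma$ prefactor, leaving only an $a$-independent one-variable inequality to split.
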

To prove Proposition \ref{thm-ua}, we need the following  result quoted from \cite{WWZ}:
\begin{lemma}[{\cite[Lemma 3.1]{WWZ}}]\label{lem-series-sum}
Let $a>0$, $b\in(0,1)$ and $\theta\in(0,1)$. Then
\begin{eqnarray*}%\label{0407-lemma-int-0}
 \sum_{k=1}^\infty   b^{\theta^k} e^{-ak}
 \leq  \frac{e^a}{|\ln\theta|}  \Gamma\Big(\frac{a}{|\ln b|}\Big) |\ln x|^{-\frac{a}{|\ln\theta|}}.
\end{eqnarray*}
\end{lemma}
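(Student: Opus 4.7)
The plan is to reduce the series to a gamma integral. First I would rewrite $b^{\theta^k} = e^{-\theta^k|\ln b|}$ (using $b\in(0,1)$) and define
\begin{equation*}
f(s) := \exp\bigl(-\theta^s|\ln b| - as\bigr),\qquad s\geq 0,
\end{equation*}
so that the left-hand side equals $\sum_{k=1}^\infty f(k)$. The auxiliary function $g(s):=f(s)e^{as}=\exp(-\theta^s|\ln b|)$ is manifestly monotone increasing in $s$, because $\theta\in(0,1)$ forces $\theta^s$ to be decreasing. This single monotonicity fact is the engine that lets me compare the sum with an integral.

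Next I would convert the sum into an integral over $(0,\infty)$. For $s\in[k,k+1]$, monotonicity of $g$ gives $f(s)e^{as}\geq f(k)e^{ak}$, hence $f(s)\geq e^{-a}f(k)$. Integrating over $[k,k+1]$ and summing in $k\geq 1$ yields
\begin{equation*}
\sum_{k=1}^\infty f(k)\;\leq\;e^{a}\sum_{k=1}^\infty\int_{k}^{k+1}f(s)\,\mathrm ds\;\leq\;e^{a}\int_{0}^{\infty}f(s)\,\mathrm ds.
\end{equation*}
The constant $e^a$ already explains the $e^a$ factor appearing in the stated bound.

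Finally I would evaluate $\int_0^\infty f(s)\,\mathrm ds$ by the change of variable $w=\theta^s|\ln b|$. This gives $\mathrm dw=-w|\ln\theta|\,\mathrm ds$ and $e^{-as}=(w/|\ln b|)^{a/|\ln\theta|}$, together with endpoints $s=0\leftrightarrow w=|\ln b|$ and $s=\infty\leftrightarrow w=0$. Substituting,
\begin{equation*}
\int_{0}^{\infty}f(s)\,\mathrm ds
=\frac{|\ln b|^{-a/|\ln\theta|}}{|\ln\theta|}\int_{0}^{|\ln b|}e^{-w}w^{a/|\ln\theta|-1}\,\mathrm dw
\;\leq\;\frac{|\ln b|^{-a/|\ln\theta|}}{|\ln\theta|}\,\Gamma\!\Bigl(\frac{a}{|\ln\theta|}\Bigr).
\end{equation*}
Chaining the two displays produces exactly the inequality of the lemma (with $|\ln b|$ playing the role of $|\ln x|$ in the stated right-hand side, which is plainly a typographical slip since no $x$ is introduced).

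No step is genuinely hard; the only point requiring care is the choice of the correct substitution. A naive attempt using $u=k|\ln\theta|$ or $v=e^{-ak}$ leaves a double-exponential in the integrand, whereas $w=\theta^s|\ln b|$ is the unique substitution that both linearises the inner exponential into $e^{-w}$ and produces the power $w^{a/|\ln\theta|-1}$ needed to match the gamma kernel. Once that substitution is in hand the rest is bookkeeping.
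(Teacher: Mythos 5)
Your argument is correct, and it is the natural one: the monotone comparison of the series with $\int_0^\infty e^{-\theta^s|\ln b|-as}\,\mathrm ds$ costs exactly the factor $e^a$, and the substitution $w=\theta^s|\ln b|$ turns the integral into an incomplete gamma function bounded by $\Gamma\bigl(a/|\ln\theta|\bigr)$. The paper itself offers no proof to compare against — Lemma \ref{lem-series-sum} is quoted verbatim from \cite[Lemma 3.1]{WWZ} — so your write-up stands as a self-contained justification. One point you should make explicit rather than pass over: the statement as printed contains \emph{two} typographical errors, not one. You correctly flag that $|\ln x|$ must be $|\ln b|$, but your final bound carries $\Gamma\bigl(\frac{a}{|\ln\theta|}\bigr)$ while the printed lemma has $\Gamma\bigl(\frac{a}{|\ln b|}\bigr)$; these do not agree, so it is not accurate to say your display "produces exactly the inequality of the lemma." Your version is the right one — it is what the change of variables yields, and it is what the paper actually uses in \eqref{equ-thm-ua-4}, where the lemma is applied with $a$ replaced by $a/2$ and the gamma factor appears as $\Gamma\bigl(\frac{a}{2|\ln\theta|}\bigr)$ — but the discrepancy with the printed statement should be recorded, not elided.
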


\begin{proof}[\textbf{Proof of Proposition \ref{thm-ua}}]
Arbitrarily fix $a>0$ and $t>0$. And then arbitrarily fix $f\in L^2(\mathbb R^n)$ satisfies $\widehat{f}\in L^2(e^{t|\xi|^2}\,\mathrm d\xi)$.
It suffices to show the inequality in Proposition \ref{thm-ua} for
the above fixed $a$, $t$, $f$ and any $\varepsilon>0$.
Without loss of generality, we can assume that
\begin{eqnarray}\label{3.31GGwang}
A:=\int_{\mathbb{R}^n}|\widehat{f}|^2e^{t|\xi|^2}\,\mathrm d\xi\neq 0\;\;\mbox{and}\;\; B:=\int_{B_{1}}|f|^2\,\mathrm dx\neq 0.
\end{eqnarray}
For otherwise, when $A=0$, we have that $f=0$ over $\mathbb{R}^n$, thus the desired inequality  is trivial; while when $B=0$, we can use
the analyticity of $f$ (which follows from Corollary \ref{lem-ua-2}) to see that
 $f=0$ over $\mathbb{R}^n$ and then the desired inequality   is trivial again.

By (\ref{3.17gwang}), we have that
\begin{align}\label{equ-thm-ua-2}
\int_{\mathbb{R}^n}e^{-a|x|}|f|^2\,\mathrm dx&= \int_{B_{1}}e^{-a|x|}|f|^2\,\mathrm dx+ \sum_{j\geq 1} \int_{\Omega_{j+1}}e^{-a|x|}|f|^2\,\mathrm dx\nonumber\\
&\leq \int_{B_{1}}|f|^2\,\mathrm dx+ \sum_{j\geq 1} \int_{\Omega_{j+1}}e^{-aj}|f|^2\,\mathrm dx.
\end{align}
We now  estimate the last term of \eqref{equ-thm-ua-2}. According to Lemma \ref{lem-ring}, there is $C_1=C_1(n)>0$ and $\theta=\theta(n)\in (0,1)$ so that
\begin{align}\label{equ-thm-ua-3}
\sum_{j\geq 1} \int_{\Omega_{j+1}}e^{-aj}|f|^2\,\mathrm dx&\leq e^{C_1\left(1+\frac{1}{t}\right)}\sum_{j\geq 1}j^{n-1}e^{-aj}\left(\int_{B_{1}}|f|^2\,\mathrm dx\right)^{\theta^j}\left(\int_{\mathbb{R}^n}|\widehat{f}|^2e^{t|\xi|^2}\,\mathrm d\xi\right)^{1-\theta^j}\nonumber\\
&\leq e^{C_1\left(1+\frac{1}{t}\right)}n!\left({2}/{a}\right)^n\sum_{j\geq 1}e^{-\frac{a}{2}j}A\left({B}/{A}\right)^{\theta^j},
\end{align}
where  $A$ and $B$ are given by (\ref{3.31GGwang}).
In the proof of (\ref{equ-thm-ua-3}), we used   the inequality:
$$
j^ne^{-\frac{a}{2}j}\leq n!\left({2}/{a}\right)^n\;\;\mbox{for all}\;\;j\geq 1.
$$
Meanwhile, by Lemma \ref{lem-series-sum} (with $b=B/A$), we have that
\begin{align}\label{equ-thm-ua-4}
\sum_{j\geq 1}e^{-\frac{a}{2}j}A({B}/{A})^{\theta^j}\leq \frac{e^{\frac{a}{2}}}{|\ln \theta|}\Gamma\Big(\frac{a}{2|\ln \theta|}\Big)A|\ln ({B}/{A})|^{-\frac{a}{2|\ln \theta|}}.
\end{align}

About $A/B$, there are only two possibilities: either $A/B>e$ or $A/B\leq e$.

In the first case when  $A/B> e$, we claim that
\begin{align}\label{equ-thm-ua-5}
A|\ln ({B}/{A})|^{-\frac{a}{2|\ln \theta|}} \leq \varepsilon A + e^{\varepsilon^{-\frac{2|\ln \theta|}{a}}} B\;\;\mbox{for all}\;\;\varepsilon>0.
\end{align}
In fact, when $\varepsilon$ satisfies that
$$
A|\ln ({B}/{A})|^{-\frac{a}{2|\ln \theta|}} \leq \varepsilon A,
 $$
 \eqref{equ-thm-ua-5} is trivial. One the other hand,  when $\varepsilon>0$ satisfies that
 $$
 A|\ln ({B}/{A})|^{-\frac{a}{2|\ln \theta|}} > \varepsilon A,
 $$
  we have that
$$
{A}/{B}<e^{\varepsilon^{-\frac{2|\ln \theta|}{a}}}.
$$
 This, along with the fact that
${A}/{B}> e$, yields that
$$
A|\ln ({B}/{A})|^{-\frac{a}{2|\ln \theta|}}\leq A \leq B\cdot {A}/{B}\leq e^{\varepsilon^{-\frac{2|\ln \theta|}{a}}} B\;\;\mbox{for all}\;\;\varepsilon>0.
$$
Thus, we have proved \eqref{equ-thm-ua-5}.

Inserting \eqref{equ-thm-ua-5} into \eqref{equ-thm-ua-4} leads to
\begin{align}\label{equ-thm-ua-6}
\sum_{j\geq 1}e^{-\frac{a}{2}j}A({B}/{A})^{\theta^j}\leq \frac{e^{\frac{a}{2}}}{|\ln \theta|}\Gamma\Big(\frac{a}{2|\ln \theta|}\Big)\Big(\varepsilon A + e^{\varepsilon^{-\frac{2|\ln \theta|}{a}}} B \Big)\;\;\mbox{for all}\;\;\varepsilon>0.
\end{align}
Now it follows from \eqref{equ-thm-ua-2},\eqref{equ-thm-ua-3}
 and \eqref{equ-thm-ua-6} that for some $C_2=C_2(n)>0$,
\begin{align*}
\int_{\mathbb{R}^n}e^{-a|x|}|f|^2\,\mathrm dx &\leq e^{C_1(1+\frac{1}{t})}n!\Big(1+({2}/{a})^n\frac{e^{\frac{a}{2}}}{|\ln \theta|}\Gamma\Big(\frac{a}{2|\ln \theta|}\Big)\Big)\Big(\varepsilon A + e^{\varepsilon^{-\frac{2|\ln \theta|}{a}}} B \Big)\\
&\leq e^{C_2(1+\frac{1}{t}+a)}\Big(1+a^{-n}\Gamma\Big(\frac{a}{2|\ln \theta|}\Big)\Big)\Big(\varepsilon A
+ e^{\varepsilon^{-\frac{2|\ln \theta|}{a}}} B \Big)\;\;\mbox{for any}\;\;\varepsilon>0.
\end{align*}
This proves the desired inequality  for the first case that $A/B>e$.

In the second case  where  $A/B\leq e$, we derive directly that
\begin{align*}
\int_{\mathbb{R}^n}e^{-a|x|}|f|^2\,\mathrm dx&  \leq \int_{\mathbb{R}^n}|f|^2\,\mathrm dx \leq \int_{\mathbb{R}^n}|\widehat{f}|^2e^{t|\xi|^2}\,\mathrm d\xi \leq e\int_{B_1}|f|^2\,\mathrm dx\\
& \leq e\left(\varepsilon \int_{\mathbb{R}^n}|\widehat{f}|^2e^{t|\xi|^2}\,\mathrm d\xi + e^{\varepsilon^{-\frac{2|\ln \theta|}{a}}} \int_{B_1}|f|^2\,\mathrm dx \right)\;\;\mbox{for any}\;\;\varepsilon>0.
\end{align*}
This proves the desired inequality  for the second case that $A/B\leq e$.

Hence, we end the proof of Proposition \ref{thm-ua}.
\end{proof}

%\subsubsection{Interpolation inequalities for heat equations}

We now are on the position to show  Theorem \ref{app-obs}.

\begin{proof}[\textbf{Proof of Theorem \ref{app-obs}}] (i). Arbitrarily fix  $u_0\in L^2(e^{a|x|}\mathrm dx)$. Let $u(T,x)=(e^{T\triangle}u_0)(x)$, $x\in \mathbb{R}^n$. By the H\"{o}lder inequality, we have that
\begin{align}\label{equ-thm2-1}
\int_{\mathbb{R}^n}|u(T,x)|^2\,\mathrm dx \leq \Big(\int_{\mathbb{R}^n}|u(T,x)|^2e^{a|x|}\,\mathrm dx\Big)^{1/2}\Big(\int_{\mathbb{R}^n}|u(T,x)|^2e^{-a|x|}\,\mathrm dx\Big)^{1/2}.
\end{align}
We will estimate the two terms on right side of \eqref{equ-thm2-1} one by one. For the first term, we apply Lemma \ref{lem-persis-1} (with $\nu=1$) to obtain that
\begin{align}\label{equ-thm2-2}
\int_{\mathbb{R}^n}|u(T,x)|^2e^{a|x|}\,\mathrm dx \leq 2^{n}e^{2a^2T}\int_{\mathbb{R}^n}|u_0(x)|^2e^{a|x|}\,\mathrm dx.
\end{align}
To estimate  the second term (on right side of \eqref{equ-thm2-1}), we first notice that
$\widehat{e^{T\Delta}u_0}\in L^2(e^{T|\xi|^2}d\xi)$, since $u_0\in L^2(e^{a|x|}\mathrm dx)\subset L^2(\mathbb{R}^n)$. Thus,
we can apply Proposition \ref{thm-ua} (with $f=e^{T\triangle}u_0$ and $t=2T$) to find
$C=C(n)>0$ and $\theta=\theta(n)\in (0,1)$ so that for each $\varepsilon>0$,
\begin{align}\label{equ-thm2-3}
\int_{\mathbb{R}^n}e^{-a|x|}|u(T,x)|^2\,\mathrm dx \leq C(T,a,n) \Big(\varepsilon \int_{\mathbb{R}^n}|u_0(x)|^2\,\mathrm d x + e^{\varepsilon^{-\frac{2|\ln \theta|}{a}}} \int_{B_1}|u(T,x)|^2\,\mathrm dx \Big),
\end{align}
 where
 $$
 C(T,a,n) = e^{C(1+\frac{1}{T}+a)}\Big(1+a^{-n}\Gamma\Big(\frac{a}{2|\ln \theta|}\Big)\Big).
 $$

Inserting \eqref{equ-thm2-2} and \eqref{equ-thm2-3} into \eqref{equ-thm2-1}, we get that for each $\varepsilon>0$
\begin{align}\label{equ-thm2-4}
\int_{\mathbb{R}^n}&|u(T,x)|^2\,\mathrm dx
 \leq \widehat{C} \Big(\int_{\mathbb{R}^n}|u_0(x)|^2e^{a|x|}\,\mathrm dx\Big)^{1/2}\Big(\varepsilon \int_{\mathbb{R}^n}|u_0(x)|^2\,\mathrm d x + e^{\varepsilon^{-\frac{2|\ln \theta|}{a}}} \int_{B_1}|u(T,x)|^2\,\mathrm dx \Big)^{1/2}\nonumber\\
& \leq \widehat{C} \Big(\varepsilon^{1/2}\int_{\mathbb{R}^n}|u_0(x)|^2e^{a|x|}\,\mathrm dx\Big)^{1/2}\Big(\varepsilon^{1/2} \int_{\mathbb{R}^n}|u_0(x)|^2\,\mathrm d x + \varepsilon^{-1/2}e^{\varepsilon^{-\frac{2|\ln \theta|}{a}}} \int_{B_1}|u(T,x)|^2\,\mathrm dx \Big)^{1/2}\nonumber\\
&\leq 2^{-1} \widehat{C}\Big(\varepsilon^{1/2}\int_{\mathbb{R}^n}|u_0(x)|^2e^{a|x|}\,\mathrm dx + \varepsilon^{1/2} \int_{\mathbb{R}^n}|u_0(x)|^2\,\mathrm dx+\varepsilon^{-1/2}e^{\varepsilon^{-\frac{2|\ln \theta|}{a}}} \int_{B_1}|u(T,x)|^2\,\mathrm dx\Big)\nonumber\\
&\leq \widehat{C}\Big(\varepsilon^{1/2}\int_{\mathbb{R}^n}|u_0(x)|^2e^{a|x|}\,\mathrm dx  + \varepsilon^{-1/2}e^{\varepsilon^{-\frac{2|\ln \theta|}{a}}} \int_{B_1}|u(T,x)|^2\,\mathrm dx \Big),
\end{align}
where
$$
\widehat{C}=\widehat{C}(T,a,n)=2^{n/2}e^{a^2T}\sqrt{C(T,a,n)}.
$$
Since $\varepsilon>0$ can be arbitrary taken, we replace  $\varepsilon$ by $\varepsilon^2$ in \eqref{equ-thm2-4} to get the desired conclusion in (i) of Theorem \ref{app-obs}.

\medskip

(ii). Arbitrarily fix $u_0\in L^2(\langle x\rangle^\nu \mathrm dx)$. Let $u(T,x)=\left(e^{T\triangle}u_0\right)(x)$, $x\in \mathbb{R}^n$.
Three facts are given in order.
Fact One. Using the inequality:
$$
1\leq \varepsilon |x|^\nu + e^{({1}/{\varepsilon})^{\frac{1}{\nu}}}e^{-|x|}
 \;\;\mbox{for all}\;\; \varepsilon>0\;\;\mbox{and}\;\; x\in \mathbb{R}^n,
$$
we find that
\begin{align}\label{equ-thm2-5}
\int_{\mathbb{R}^n}|u(T,x)|^2\,\mathrm dx\leq \varepsilon \int_{\mathbb{R}^n}|u(T,x)|^2|x|^\nu \,\mathrm dx + e^{({1}/{\varepsilon})^{\frac{1}{\nu}}}\int_{\mathbb{R}^n}|u(T,x)|^2e^{-|x|}\,\mathrm dx.
\end{align}
Fact Two. Since $0<\nu\leq 1$, we can apply  Lemma \ref{lem-persis-3} to find
 $C_1=C_1(n)$ so that
\begin{align}\label{equ-thm2-6}
\int_{\mathbb{R}^n}|u(T,x)|^2|x|^\nu \,\mathrm dx &\leq \Big(4^{\nu+2}\Gamma({\nu}/{2}+n)\left(1+T^{\frac{\nu}{4}}\right)\|u_0\|_{L^2(\langle x\rangle^\nu \,\mathrm dx)}\Big)^2\nonumber\\
&\leq C_1 \left(1+T^{\frac{\nu}{2}}\right)\int_{\mathbb{R}^n}|u_0(x)|^2\langle x\rangle^\nu \,\mathrm dx.
\end{align}
Fact Three. We can use \eqref{equ-thm2-3} (with  $a=1$ and $\varepsilon=\mu$) to find $C_2=C_2(n)$ and $\theta=\theta(n)\in (0,1)$ so that for all $\mu>0$,
 \begin{align}\label{equ-thm2-7}
\int_{\mathbb{R}^n}e^{-|x|}|u(T,x)|^2\,\mathrm dx\leq e^{C_2\left(1+\frac{1}{T}\right)}\Big(\mu \int_{\mathbb{R}^n}|u_0(x)|^2\,\mathrm dx + e^{\mu^{-2|\ln \theta|}} \int_{B_1}\left|u(T,x)\right|^2\,\mathrm dx \Big).
 \end{align}

To continue the proof, we arbitrarily fix $\varepsilon\in(0,1)$. We will first use Fact Three, and then use Fact One and Fact Two. By taking $\mu = \varepsilon e^{-(\frac{1}{\varepsilon})^{\frac{1}{\nu}}}$ in \eqref{equ-thm2-7}, we obtain that
\begin{align}\label{equ-thm2-8}
&e^{({1}/{\varepsilon})^{\frac{1}{\nu}}}\int_{\mathbb{R}^n}e^{-|x|}|u(T,x)|^2\,\mathrm dx\nonumber\\
&\leq e^{({1}/{\varepsilon})^{\frac{1}{\nu}}}e^{C_2\left(1+\frac{1}{T}\right)}\Big(\mu \int_{\mathbb{R}^n}|u_0(x)|^2\,\mathrm dx + e^{\mu^{-2|\ln \theta|}}\int_{B_1}|u(T,x)|^2\,\mathrm dx\Big)\nonumber\\
&= e^{C_2\left(1+\frac{1}{T}\right)}\Big(\varepsilon \int_{\mathbb{R}^n}|u_0(x)|^2\,\mathrm dx + b_\varepsilon\int_{B_1}|u(T,x)|^2\,\mathrm dx\Big),
\end{align}
where
$$
b_\varepsilon = \exp \Big[{({1}/{\varepsilon})^{\frac{1}{\nu}} + (\varepsilon^{-2}e^{2({1}/{\varepsilon})^{\frac{1}{\nu}}})^{|\ln\theta|}}\Big].
$$
Meanwhile, one can directly check
 the following two inequalities:
\begin{align}\label{equ-thm2-9}
s^2 \leq e^s\;\;\mbox{for all}\;\;s>0;
\end{align}
\begin{align}\label{equ-thm2-10}
s+e^{3|\ln \theta|s}\leq e^{(3|\ln \theta|+1)s}\;\;\mbox{for all}\;\;s>0.
\end{align}
Choosing  $s=\varepsilon^{-1}$ and
$s=({1}/{\varepsilon})^{\frac{1}{\nu}}$
in \eqref{equ-thm2-9} and  \eqref{equ-thm2-10} respectively, using  $0<\nu\leq 1$, we find
that
\begin{align}\label{equ-10-28-2}
b_\varepsilon
&\leq \exp\Big[{({1}/{\varepsilon})^{\frac{1}{\nu}} + (e^{{1}/{\varepsilon}}
e^{2({1}/{\varepsilon})^{\frac{1}{\nu}}})^{|\ln\theta|}}\Big]\nonumber\\
&\leq \exp\Big[{({1}/{\varepsilon})^{\frac{1}{\nu}} + (e^{3({1}/{\varepsilon})^{\frac{1}{\nu}}})^{|\ln\theta|}}\Big]
\nonumber\\
&\leq \exp\Big[{e^{(3|\ln \theta|+1)({1}/{\varepsilon})^{\frac{1}{\nu}}}}\Big].
\end{align}
Combining \eqref{equ-thm2-8} and \eqref{equ-10-28-2} leads to  that
\begin{align}\label{equ-10-28-3}
&e^{({1}/{\varepsilon})^{\frac{1}{\nu}}}\int_{\mathbb{R}^n}e^{-|x|}|u(T,x)|^2\,\mathrm dx\nonumber\\
&\leq  e^{C_2(n)\left(1+\frac{1}{T}\right)}\Big(\varepsilon \int_{\mathbb{R}^n}|u_0(x)|^2\,\mathrm dx + e^{e^{(3|\ln \theta|+1)({1}/{\varepsilon})^{\frac{1}{\nu}}}}\int_{B_1}|u(T,x)|^2\,\mathrm dx\Big).
\end{align}

Finally, inserting \eqref{equ-thm2-6} and \eqref{equ-10-28-3} into \eqref{equ-thm2-5}, we obtain that for some  $C_3=C_3(n)$,
\begin{eqnarray*}
&&\int_{\mathbb{R}^n}|u(T,x)|^2\,\mathrm dx\nonumber\\
&\leq& \Big(C_1 (1+T^{\frac{\nu}{2}})+e^{C_2(1+\frac{1}{T})}\Big)\Big(\varepsilon \int_{\mathbb{R}^n}|u_0(x)|^2\langle x\rangle^\nu \,\mathrm dx + e^{e^{(3|\ln \theta|+1)({1}/{\varepsilon})^{\frac{1}{\nu}}}}\int_{B_1}|u(T,x)|^2\,\mathrm dx\Big)\\
&\leq& (1+T^{\frac{\nu}{2}})e^{C_3(1+\frac{1}{T})}\Big(\varepsilon \int_{\mathbb{R}^n}|u_0(x)|^2\langle x\rangle^\nu \,\mathrm dx + e^{e^{(3|\ln \theta|+1)({1}/{\varepsilon})^{\frac{1}{\nu}}}}\int_{B_1}|u(T,x)|^2\,\mathrm dx\Big),
\end{eqnarray*}
which leads to the desired conclusion in (ii) of Theorem \ref{app-obs}.

Hence, we end the proof of  Theorem \ref{app-obs}.
\end{proof}

\subsection{Weak observability inequalities with observations on  balls}\label{07bao1}

According to Theorem \ref{equi-thm}, it is impossible to recover a solution of \eqref{heat} by observing it over a ball. Thus, two  interesting questions arise.  First, can  we  recover a solution of \eqref{heat} over a ball by observing it on another ball?
Second, can we have observability inequalities with observations over balls  for solutions of \eqref{heat} with some kind of
initial values?
The answer to the first question is almost negative, while we give partially positive  answer for the second question.
The first  main result of this subsection is
 stated as follows:

\begin{theorem}\label{prop-1}
\begin{description}
  \item [(i)] There is an absolute positive constant $C$ so that for all $T>0$ and  $0<r'<r$,
\begin{align*}%\label{ob-loc-1}
\int_{B_{r'}}u^2(T,x)\,\mathrm dx\leq\left(\frac{1}{T}+\frac{Cn}{(r-r')^2}\right)\int_0^T\int_{B_r}u^2(t,x)\,\mathrm dx\,\mathrm dt,  \;\;\mbox{when}\;\; u\;\;\mbox{solves}\;\; \eqref{heat}.
\end{align*}

  \item [(ii)] Given $T>0$ and  $r'>r>0$, there is no constant $C=C(T,r',r,n)$  so that
  \begin{align*}%\label{ob-loc-2}
\int_{B_{r'}}u^2(T,x)\,\mathrm dx\leq C\int_0^T\int_{B_r}u^2(t,x)\,\mathrm dx\,\mathrm dt
\;\;\mbox{for  any solution}\;\; u\;\;\mbox{to}\;\;\eqref{heat}.
\end{align*}

\end{description}
\end{theorem}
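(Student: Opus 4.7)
The plan is to prove parts (i) and (ii) separately: (i) by a Caccioppoli-type cutoff energy estimate combined with a time-averaging argument, and (ii) by exhibiting an explicit family of $L^2$ solutions (Gaussians translated to infinity) whose ratio diverges.

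For part (i), I would choose a smooth radial cutoff $\phi:\mathbb{R}^n\to[0,1]$ with $\phi\equiv 1$ on $\overline{B_{r'}}$, $\mathrm{supp}\,\phi\subset B_r$, and $|\nabla\phi|^2\leq c_0/(r-r')^2$ for an absolute constant $c_0$. Multiplying $\partial_t u=\Delta u$ by $\phi^2 u$ and integrating by parts over $\mathbb{R}^n$ (boundary terms at infinity vanish for $L^2$ heat solutions), the mixed term $-4\!\int\phi u\,\nabla\phi\!\cdot\!\nabla u$ can be absorbed into $-2\!\int\phi^2|\nabla u|^2$ by Young's inequality, giving
\[
\frac{d}{dt}\int_{\mathbb{R}^n}\phi^2 u^2\,\mathrm dx
\;\leq\;2\!\int_{\mathbb{R}^n}|\nabla\phi|^2 u^2\,\mathrm dx
\;\leq\;\frac{Cn}{(r-r')^2}\int_{B_r}u^2\,\mathrm dx.
\]
Integrating this from $s$ to $T$, averaging over $s\in(0,T)$, and using $\int_{B_{r'}}u^2(T)\leq\int\phi^2 u^2(T)$ together with $\int\phi^2 u^2(s)\leq\int_{B_r}u^2(s)$ yields the claim with the stated constant $\tfrac{1}{T}+\tfrac{Cn}{(r-r')^2}$.

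For part (ii), fix any $\sigma>0$ and, for a parameter $R\to\infty$, take the solution $u_R$ with Gaussian initial datum $u_{0,R}(x):=\exp(-|x-Re_1|^2/\sigma^2)\in L^2(\mathbb{R}^n)$. The heat evolution is explicit:
\[
u_R(t,x)=\Bigl(1+\tfrac{4t}{\sigma^2}\Bigr)^{-n/2}\exp\!\Bigl(-\tfrac{|x-Re_1|^2}{\sigma^2+4t}\Bigr).
\]
Since $|x-Re_1|\geq R-r$ on $B_r$ and $t\mapsto e^{-2(R-r)^2/(\sigma^2+4t)}$ is increasing, one obtains
\[
\int_0^T\!\!\int_{B_r}u_R^2\,\mathrm dx\,\mathrm dt
\;\leq\;T\,|B_r|\exp\!\Bigl(-\tfrac{2(R-r)^2}{\sigma^2+4T}\Bigr).
\]
For a matching lower bound, restrict to the sub-ball $A:=B_\delta\bigl((r'-\delta)e_1\bigr)\subset B_{r'}$ with $\delta:=(r'-r)/4$, on which $|x-Re_1|\leq R-r'+2\delta$, producing
\[
\int_{B_{r'}}u_R^2(T,x)\,\mathrm dx
\;\geq\;V_n\delta^n\Bigl(1+\tfrac{4T}{\sigma^2}\Bigr)^{-n}\exp\!\Bigl(-\tfrac{2(R-r'+2\delta)^2}{\sigma^2+4T}\Bigr).
\]
The algebraic identity $(R-r)^2-(R-r'+2\delta)^2=\tfrac{r'-r}{2}\bigl(2R-\tfrac{3r}{2}-\tfrac{r'}{2}\bigr)$ is strictly positive and grows linearly in $R$, so the ratio of these bounds grows like $\exp(c_{T,r,r'}R)$ times constants and diverges as $R\to\infty$; this contradicts the existence of any $C=C(T,r',r,n)$.

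The routine step is the energy estimate in (i). The main point in (ii) is the geometric observation that a Gaussian source at distance $R\gg r'$ is exponentially less attenuated on $B_{r'}$ (minimum distance $R-r'$) than on $B_r$ (minimum distance $R-r$), precisely because $r<r'$. The key technical step is selecting the sub-ball $A$ tangent internally to $\partial B_{r'}$ at $r'e_1$, which retains a lower bound of sharp order $\exp(-2(R-r')^2/(\sigma^2+4T))$ up to $R$-independent constants; a naive bound over all of $B_{r'}$ would only give $\exp(-2(R+r')^2/(\sigma^2+4T))$ and would not suffice.
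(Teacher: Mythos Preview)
Your proof is correct and follows essentially the same strategy as the paper. For (i) the paper sets $v=\varphi u$ and multiplies the equation for $v$ by $tv$, whereas you multiply the heat equation directly by $\phi^2 u$ and then time-average; both are standard Caccioppoli manoeuvres yielding the same constant structure. For (ii) the paper uses the translated heat kernel $u_k(t,x)=(4\pi(t+1))^{-n/2}e^{-|x-ke_1|^2/4(t+1)}$ and the same internally tangent sub-ball trick near $r'e_1$ to capture the sharp lower bound, so your argument is a reparametrization (general $\sigma$ instead of $\sigma^2=4$) of theirs.
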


\begin{proof}

(i) Arbitrarily fix $T>0$ and $0<r'<r$. Arbitrarily fix a solution $u$ to \eqref{heat}. Let $u(0,x)=u_0(x)$, $x\in \mathbb{R}^n$.
Choose  a $C^2$ function  $\varphi$
 on $\mathbb{R}^n$ so that for some absolute constant $C>0$,
\begin{eqnarray}\label{3.50wGGng}
0\leq \varphi(x)\leq 1\;\;\mbox{over}\;\;\mathbb{R}^n;\;
\left|D^\alpha \varphi(x)\right|\leq C \left(r-r'\right)^{-|\alpha|}\;\;\mbox{for all}\;\;\alpha\in \mathbb{N}^n,
\;\;\mbox{with}\;\; |\alpha|\leq 2,
\end{eqnarray}
and so that
$$
\varphi(x)=
\left\{
  \begin{array}{ll}
    1, & \hbox{$x\in B_{r'},$} \\
    0, & \hbox{$x\in B_r^c.$}
  \end{array}
\right.
$$
 Set $v=\varphi u$. Then $v$ satisfies
\begin{align}\label{equ-v}
v_t-\triangle v = -2\nabla \varphi \cdot \nabla u - \triangle \varphi u\quad\text{in}\;\;\mathbb R^+\times\mathbb R^n,\;\;\;\; v(0,\cdot)=\varphi(\cdot) u_0(\cdot)\;\;\text{in}\;\;\mathbb R^n.
\end{align}
Multiplying  \eqref{equ-v} by $tv$ leads to
\begin{align}\label{equ-v-1}
\frac{1}{2}\left(tv^2\right)_t-\frac{1}{2}v^2-tv\triangle v = -2tu\nabla \varphi \cdot \nabla v + t\left(2|\nabla \varphi|^2-\varphi\triangle \varphi\right) u^2.
\end{align}
Integrating \eqref{equ-v-1} over $(0,T)\times \mathbb{R}^n$, we have that
\begin{eqnarray}\label{equ-1}
&&\frac{1}{2}\int_{\mathbb{R}^n}Tv^2(T,x)\,\mathrm dx-\frac{1}{2}\int_0^T\int_{\mathbb{R}^n}v^2(t,x)\,\mathrm dx\,\mathrm dt+\int_0^T\int_{\mathbb{R}^n}t\left|\nabla v(t,x)\right|^2\,\mathrm dx\,\mathrm dt\\
&=&\int_0^T\int_{\mathbb{R}^n}-2tu\nabla \varphi(x) \cdot \nabla v(t,x)\,\mathrm dx\,\mathrm dt\nonumber\\
&&+\int_0^T\int_{\mathbb{R}^n}t\left(2\left|\nabla \varphi(x)\right|^2-\varphi(x)\triangle \varphi(x)\right) u^2(t,x)\,\mathrm dx\,\mathrm dt.\nonumber
\end{eqnarray}
Meanwhile, by the H\"{o}lder inequality, we find that
\begin{align}\label{equ-2}
&\Big|\int_0^T\int_{\mathbb{R}^n}-2tu(t,x)\nabla \varphi(x) \cdot \nabla v(t,x)\,\mathrm dx\,\mathrm dt\Big|\nonumber\\
&\leq \int_0^T\int_{\mathbb{R}^n}t\left|\nabla v(t,x)\right|^2\,\mathrm dx\,\mathrm dt + \int_0^T\int_{\mathbb{R}^n}t\left|\nabla \varphi(x)\right|^2u^2(t,x)\,\mathrm dx\,\mathrm dt.
\end{align}
Inserting \eqref{equ-2} into \eqref{equ-1} leads to that
\begin{align}\label{equ-3}
&T\int_{\mathbb{R}^n}v^2(T,x)\,\mathrm dx\nonumber\\
&\leq \int_0^T\int_{\mathbb{R}^n}v^2(t,x)\,\mathrm dx\,\mathrm dt+2\int_0^T\int_{\mathbb{R}^n}t\left(3|\nabla \varphi(x)|^2-\varphi(x)\triangle \varphi(x)\right) u^2(t,x)\,\mathrm dx\,\mathrm dt.
\end{align}
Since $v=u$ on $B_{r'}$ and $v=0$ on $B^c_r$, we can use  \eqref{equ-3} and
(\ref{3.50wGGng}) to     get that
\begin{align*}%\label{equ-4}
&\int_{B_{r'}}u^2(T,x)\,\mathrm dx\\
&\leq \frac{1}{T}\int_0^T\int_{B_r}u^2(t,x)\,\mathrm dx\,\mathrm dt+\frac{2}{T}\int_0^T\int_{B_r\backslash B_{r'}}t\left(3|\nabla \varphi(x)|^2-\varphi(x)\triangle \varphi(x)\right) u^2(t,x)\,\mathrm dx\,\mathrm dt\nonumber\\
&\leq \left(\frac{1}{T}+\frac{8Cn}{(r-r')^2}\right)\int_0^T\int_{B_r}u^2(t,x)\,\mathrm dx\,\mathrm dt,
\end{align*}
which leads to the desired conclusion in (i) of  Theorem \ref{prop-1}.

\medskip

(ii)   By contradiction, we suppose that there were  $T>0$,  $r'>r>0$ and $C=C(T,r',r,n)>0$  so that
  \begin{align}\label{equ-contra-1}%\label{ob-loc-2}
\int_{B_{r'}}u^2(T,x)\,\mathrm dx\leq C\int_0^T\int_{B_r}u^2(t,x)\,\mathrm dx\,\mathrm dt
\;\;\mbox{for any solution}\;\;u\;\;\mbox{to}\;\; \eqref{heat}.
\end{align}
We  would use a constructive method to derive a contradiction with  (\ref{equ-contra-1}).
For this purpose, we define, for  each $k\geq1$,
$$
u_k(t,x)=\frac{1}{(4\pi (t+1))^{n/2}}e^{-\frac{|x_1-k|^2+|x'|^2}{4(t+1)}},\quad (t,x)=(t,x_1,x')\in[0,\infty)\times\mathbb R\times\mathbb R^{n-1}.
$$
One can easily check that  $u_k$ is the solution of \eqref{heat} with initial value:
$$
u_k(0,x)=\frac{1}{(4\pi)^{n/2}}e^{-\frac{|x_1-k|^2+|x'|^2}{4}},\quad x=(x_1,x')\in\mathbb R\times\mathbb R^{n-1}.
$$
It is clear that $\{u_k(0,\cdot)\}_{k\geq 1}$  is uniformly bounded in $L^2(\mathbb{R}^n)$.

We next show that when $k$ is large enough,  $u_k$ does not satisfy  \eqref{equ-contra-1}
(which leads to a contradiction).  To this end, we need  two estimates:
\begin{align}\label{equ-contra-2}
\int_0^T\int_{B_r}u_k^2(t,x)\,\mathrm dx\,\mathrm dt\leq \frac{r^nTV_n}{(4\pi (T+1))^n}e^{-\frac{(k-r)^2}{2(T+1)}},\; \mbox{when}\; k>r+\sqrt{2n(T+1)};
\end{align}
\begin{align}\label{equ-contra-3}
\int_{B_{r'}}u_k^2(T,x)\,\mathrm dx\geq \frac{1}{(4\pi (T+1))^n}e^{-\frac{(k-r-\frac{\sigma}{3})^2}{2(T+1)}}({\sigma}/{3})^nV_n,\; \mbox{with}\;
\sigma=r'-r,\;\;\mbox{when}\;\;k>r'.
\end{align}

To show \eqref{equ-contra-2}, we obtain from a direct computation that
$$
\partial_t(\ln u_k(t,x)) = -\frac{n}{2}\frac{1}{t+1}+\frac{|x_1-k|^2+|x'|^2}{4(t+1)^2}
\; \mbox{for all}\;\; t\geq 0,\; x=(x_1,x')\in \mathbb{R}^n,
$$
from which, it follows that when $ t\geq 0$ and $x=(x_1,x')\in \mathbb{R}^n$,
$$
\partial_t(\ln u_k(t,x))>0 \Longleftrightarrow |x_1-k|^2+|x'|^2>2n(t+1).
$$
This implies that
$$
k>r+\sqrt{2n(T+1)}  \Longrightarrow \partial_t(\ln u_k(t,x))>0\;\;\mbox{for all}\;\;(t,x)\times(0,T)\times B_r.
$$
From the above, we see that when $k>r+\sqrt{2n(T+1)}$, we have that for each
$x\in B_{r}$, $u_k(t,x)$ is an increasing function of $t$ on $[0,T]$.
Hence, when $k>r+\sqrt{2n(T+1)}$,
\begin{align*}
\int_0^T\int_{B_r}u_k^2(t,x)\,\mathrm dx\,\mathrm dt &\leq \int_0^T\int_{B_r}u_k^2(T,x)\,\mathrm dx\,\mathrm dt=\int_0^T\int_{B_r}\frac{1}{(4\pi (T+1))^n}e^{-\frac{|x_1-k|^2+|x'|^2}{2(T+1)}}\,\mathrm dx\,\mathrm dt\\
&=\frac{T}{(4\pi (T+1))^n}\int_{B_r}e^{-\frac{|x_1-k|^2+|x'|^2}{2(T+1)}}\,\mathrm dx\leq \frac{T}{(4\pi (T+1))^n}\int_{B_r}e^{-\frac{|k-r|^2}{2(T+1)}}\,\mathrm dx\\
&\leq \frac{r^nTV_n}{(4\pi (T+1))^n}e^{-\frac{(k-r)^2}{2(T+1)}},
\end{align*}
which leads to  \eqref{equ-contra-2}.

We next  show \eqref{equ-contra-3}. Given $b>0$,  use $B_r(b,0')$ to denote the ball centered at $(b,0,0,\cdots,0)$ and of radius $r$, namely
$$
B_r(b,0'):=\left\{x\in \mathbb{R}^n: |x_1-b|^2+|x'|^2\leq r^2\right\}.
$$
Let  $k>r'$ and $\sigma=r'-r$. Then
\begin{align}\label{equ-7}
\int_{B_{r'}}u_k^2(T,x)\,\mathrm dx&\geq\int_{B_{r'}\bigcap B_{k-r-\frac{\sigma}{3}}(k,0')}u_k^2(T,x)\,\mathrm dx\nonumber\\
&=\int_{B_{r'}\bigcap B_{k-r-\frac{\sigma}{3}}(k,0')}\frac{1}{(4\pi (T+1))^n}e^{-\frac{|x_1-k|^2+|x'|^2}{2(T+1)}}\,\mathrm dx\nonumber\\
&\geq \frac{1}{(4\pi (T+1))^n}e^{-\frac{\left(k-r-\frac{\sigma}{3}\right)^2}{2(T+1)}}\left|B_{r'}\bigcap B_{k-r-\frac{\sigma}{3}}(k,0')\right|.
\end{align}
Meanwhile, it is clear that
$$
B_{\frac{\sigma}{3}}(r+{2\sigma}/{3},0')\subset B_{r'}\bigcap B_{k-r-\frac{\sigma}{3}}(k,0'),
$$
 which leads to that
\begin{align}\label{equ-contra-4}
\left|B_{r'}\bigcap B_{k-r-\frac{\sigma}{3}}(k,0')\right|\geq \left|B_{\frac{\sigma}{3}}(r+{2\sigma}/{3},0')\right|=({\sigma}/{3})^nV_n.
\end{align}
Now \eqref{equ-contra-3} follows from \eqref{equ-7} and \eqref{equ-contra-4} at once.

 From  \eqref{equ-contra-2} and \eqref{equ-contra-3}, we find that when $k>\max\{r+\sqrt{2n(T+1)}, r'\}$,
\begin{align}\label{3.61GGwang}
\frac{\displaystyle\int_0^T\int_{B_r}u_k^2(t,x)\,\mathrm dx\,\mathrm dt}{\displaystyle\int_{B_{r'}}u_k^2(T,x)\,\mathrm dx}
&\leq \displaystyle\frac{\frac{r^nTV_n}
{(4\pi (T+1))^n}e^{-\frac{(k-r)^2}{2(T+1)}}}{\frac{1}{(4\pi (T+1))^n}e^{-\frac{\left(k-r-\frac{\sigma}{3}\right)^2}
{2(T+1)}}\left(\frac{\sigma}{3}\right)^nV_n}\nonumber\\
&=T\Big(\frac{3r}{\sigma}\Big)^ne^{\frac{1}{2(T+1)}
\left(\left(k-r-\frac{\sigma}{3}\right)^2-(k-r)^2\right)}\nonumber\\
&=T\Big(\frac{3r}{\sigma}\Big)^ne^{\frac{1}{2(T+1)}
\left((\frac{\sigma}{3})^2-\frac{2\sigma}{3}(k-r)\right)}\nonumber\\
&=T\Big(\frac{3r}{r'-r}\Big)^ne^{\frac{1}{2(T+1)}
\big(\big(\frac{r'-r}{3}\big)^2-\frac{2(r'-r)}{3}(k-r)\big)}.
\end{align}
Since
 $$
 (\frac{r'-r}{3})^2-\frac{2(r'-r)}{3}(k-r)\rightarrow-\infty,\;\;\mbox{as}\;\;k\rightarrow\infty,
 $$
  we see from (\ref{3.61GGwang}) that
$$
\lim_{k\rightarrow + \infty}\frac{\displaystyle\int_0^T\int_{B_r}u_k^2(t,x)\,\mathrm dx\,\mathrm dt}{\displaystyle\int_{B_{r'}}u_k^2(T,x)\,\mathrm dx}=0,
$$
from which, it follows that   $u_k$
 does not satisfy \eqref{equ-contra-1}, when  $k$ is large enough. This shows
 the conclusion  in (ii) of  Theorem \ref{prop-1}.

 Hence, we end the proof of Theorem \ref{prop-1}.
 \end{proof}
\medskip

The next corollary is a direct consequence of Theorem \ref{prop-1}.
\begin{corollary}\label{corollary3.2}
Given $\nu>0$, $T>0$ and $r>0$, there is no constant $C=C(T,r,\nu,n)>0$ so that
  \begin{align}\label{ob-wg-3}
\int_{\mathbb{R}^n}u^2(T,x)\rho(x)\,\mathrm dx\leq C\int_0^T\int_{B_r}u^2(t,x)\,\mathrm dx\,\mathrm dt\;\;\mbox{for any}\;\;u\;\;\mbox{solves}\;\;\eqref{heat},
\end{align}
where either $\rho(x)=\langle x\rangle^{-\nu}$, $x\in\mathbb R^n$, or $\rho(x)=e^{-|x|}$, $x\in\mathbb R^n$.
\end{corollary}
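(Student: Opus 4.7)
The plan is to argue by contradiction, reducing \eqref{ob-wg-3} to an inequality of the form ruled out by Theorem \ref{prop-1}(ii). Suppose for the sake of contradiction that there exist $\nu>0$, $T>0$, $r>0$ and a constant $C=C(T,r,\nu,n)>0$ for which \eqref{ob-wg-3} holds for every solution $u$ of \eqref{heat}.

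First, I would pick any radius $r'>r$ (for instance $r'=2r$) and observe that on the ball $B_{r'}$ the weight $\rho$ is bounded below by a strictly positive constant $c_{r'}$, depending only on $r'$ and (where relevant) $\nu$. Indeed, since both candidates for $\rho$ are radial and monotone decreasing in $|x|$, we may take
\begin{equation*}
c_{r'} := \min_{x\in B_{r'}} \rho(x) =
\begin{cases}
\langle r'\rangle^{-\nu}, & \text{if } \rho(x)=\langle x\rangle^{-\nu},\\
e^{-r'}, & \text{if } \rho(x)=e^{-|x|}.
\end{cases}
\end{equation*}

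Next, for any solution $u$ to \eqref{heat}, truncating the integral on the left of \eqref{ob-wg-3} to $B_{r'}$ and using $\rho\geq c_{r'}$ on $B_{r'}$ gives
\begin{equation*}
c_{r'}\int_{B_{r'}} u^2(T,x)\,\mathrm dx \leq \int_{B_{r'}} u^2(T,x)\rho(x)\,\mathrm dx \leq \int_{\mathbb{R}^n} u^2(T,x)\rho(x)\,\mathrm dx \leq C\int_0^T\!\!\int_{B_r} u^2(t,x)\,\mathrm dx\,\mathrm dt.
\end{equation*}
Consequently, setting $C' := C/c_{r'}$, we would have
\begin{equation*}
\int_{B_{r'}} u^2(T,x)\,\mathrm dx \leq C' \int_0^T\!\!\int_{B_r} u^2(t,x)\,\mathrm dx\,\mathrm dt
\end{equation*}
for every solution $u$ of \eqref{heat}, with $C'$ depending only on $T,r,r',\nu,n$. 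Since $r'>r$, this directly contradicts statement (ii) of Theorem \ref{prop-1}, and the corollary follows.

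The argument is essentially a one-line reduction, so I do not anticipate a substantive obstacle; the only thing to be careful about is verifying that the chosen weights are bounded below on a bounded set, which is immediate because both $\langle x\rangle^{-\nu}$ and $e^{-|x|}$ are continuous and strictly positive on $\overline{B_{r'}}$. The content of the corollary lies entirely in the non-observability result of Theorem \ref{prop-1}(ii), which has already been established via the explicit family of shifted Gaussian solutions $u_k$.
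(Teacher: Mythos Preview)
Your proof is correct and is precisely the ``direct consequence of Theorem~\ref{prop-1}'' that the paper asserts without writing out: bound the weight below on a ball $B_{r'}$ with $r'>r$ and invoke part~(ii). There is nothing to add.
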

%\begin{proof}
%Let $r>0, T>0$ and $u$ be the solution of \eqref{heat} with an initial value in $L^2(\mathbb R^n)$.
%By seeking a contradiction, we suppose that \eqref{ob-wg-3} holds with $\rho(x)=e^{-|x|}$, namely
%\begin{align}\label{equ-9}
%\int_{\mathbb{R}^n}u^2(T,x)e^{-|x|}\,\mathrm dx\leq C\int_0^T\int_{B_r}u^2(t,x)\,\mathrm dx\,\mathrm dt
%\qquad\text{for all}\;\;u\in C([0,T];L^2(\mathbb R^n)).
%\end{align}
%Choose $r'>r$. It follows from \eqref{equ-9} that
%$$
%\int_{B_{r'}}u^2(T,x)e^{-|x|}\,\mathrm dx\leq C\int_0^T\int_{B_r}u^2(t,x)\,\mathrm dx\,\mathrm dt \qquad\text{for all}\;\;u\in C([0,T];L^2(\mathbb R^n)),
%$$
%which implies that
%$$
%\int_{B_{r'}}u^2(T,x)\,\mathrm dx\leq Ce^{|r'|}\int_0^T\int_{B_r}u^2(t,x)\,\mathrm dx\,\mathrm dt
%\qquad\text{for all}\;\;u\in C([0,T];L^2(\mathbb R^n)).
%$$
 %This, however, leads to a contradiction with (ii) of Theorem \ref{prop-1}.

%Notice that the proof of the case that
% $\rho(x)=\langle x\rangle^{-\nu}$, $x\in\mathbb R^n$, is similar, and we omit it here.
%\end{proof}

\begin{remark}
It was announced in \cite[p. 384]{CMV} (without proof) that given a bounded interval
$E$, there is no positive weight function $\rho$ such that
$$
\int_0^\infty |u(T,x)|^2\rho(x)\,\mathrm dx \leq C \int_0^T\int_E |u(t,x)|^2\,\mathrm dx\,\mathrm dt
$$
for all solutions of the heat equation in the physical space $(0, \infty)$.  The above
Corollary \ref{prop-1} presents a similar result for the heat equation in the physical space $\mathbb{R}^n$.
\end{remark}

%\subsection{Observability inequalities for specific initial data}\label{07bao3}

The second  main result of this subsection is
 stated as follows:
\begin{theorem}\label{obs-special-data}
\begin{description}
  \item[(i)] There is a generic constant $C$ so that  for any $T>0$, $M>r>0$
  and $u_0\in L^2(\mathbb{R}^n)$ with $supp\, u_0 \subset B_r$,
\begin{align*}%\label{ob-comp}
\int_{\mathbb{R}^n}|u(T,x)|^2\,\mathrm dx\leq \left(\frac{1}{T}+\frac{Cn}{(M-r)^2}\right)\int_0^T\int_{B_M}|u(t,x)|^2\,\mathrm dx\,\mathrm dt,
\end{align*}
where $u$ is the solution to \eqref{heat} with $u(0,\cdot)=u_0(\cdot)$.
  \item[(ii)] Assume that $0\leq u_0\in L^1(\mathbb{R}^n)$ so that
$$
\int_{B_r}u_0(x)\,\mathrm dx\geq \mu \int_{\mathbb{R}^n}u_0(x)\,\mathrm dx\;\;\mbox{for some}\;\; r>0\;\;\mbox{and}\;\; \mu\in (0,1).
$$
Then for any $T>0$, $M>0$ and any solution $u$ to \eqref{heat} with $u(0,\cdot)=u_0(\cdot)$,
$$
\int_{\mathbb{R}^n} |u(T,x)|^2\,\mathrm dx \leq \frac{2^{\frac{n}{2}+1}\pi^{\frac{n}{2}}T^{\frac{n}{2}-1}}{V_n (r\wedge M)^n \mu^2}e^{\frac{4r^2}{T}}\int_{0}^T \int_{B_M} |u(t,x)|^2\,\mathrm dx\,\mathrm dt.
$$
Here, $r\wedge M := \min\{r, M\}$.
\end{description}
\end{theorem}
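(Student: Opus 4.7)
For part (i), my plan is to combine the monotonicity of the $L^2$-energy with a cut-off estimate that controls how much mass leaks outside $B_M$. Since $\|u(t)\|_{L^2(\mathbb{R}^n)}^2$ is non-increasing under the heat flow, one has
\[
T\|u(T)\|_{L^2(\mathbb{R}^n)}^2\leq \int_0^T\|u(t)\|_{L^2(\mathbb{R}^n)}^2\,\mathrm dt = \int_0^T\int_{B_M} u^2\,\mathrm dx\,\mathrm dt+\int_0^T\int_{B_M^c} u^2\,\mathrm dx\,\mathrm dt,
\]
so the task reduces to dominating the exterior integral by the observation one. I would pick a smooth cut-off $\psi:\mathbb{R}^n\to[0,1]$ with $\psi\equiv 0$ on $B_r$, $\psi\equiv 1$ on $B_M^c$, and $\|\nabla\psi\|_\infty\leq C/(M-r)$. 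The support hypothesis forces $\psi u_0\equiv 0$; multiplying the heat equation by $\psi^2 u$, integrating by parts, and absorbing the gradient cross-term via Young's inequality produces $\frac{d}{dt}\|\psi u(t)\|_{L^2}^2\leq 2\|\nabla\psi\|_\infty^2\int_{B_M} u^2\,\mathrm dx$. Using that $\int_{B_M^c} u(t)^2\leq\|\psi u(t)\|_{L^2}^2$ and integrating once more in time gives $\int_0^T\int_{B_M^c} u^2\leq 2T\|\nabla\psi\|_\infty^2\int_0^T\int_{B_M} u^2$, and the claim follows after absorbing constants.

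For part (ii), the plan is to exploit positivity together with sharp Gaussian kernel bounds. On the one hand, Young's convolution inequality combined with $\|K(T,\cdot)\|_{L^2}^2=(8\pi T)^{-n/2}$ yields
\[
\|u(T)\|_{L^2}^2\leq (8\pi T)^{-n/2}\|u_0\|_{L^1}^2\leq (8\pi T)^{-n/2}\mu^{-2}\Big(\int_{B_r} u_0\,\mathrm dx\Big)^2,
\]
where the second inequality uses the concentration hypothesis. On the other hand, for $x\in B_{r\wedge M}$ and $y\in B_r$ one has $|x-y|\leq 2r$, hence $K(t,x-y)\geq(4\pi t)^{-n/2}e^{-r^2/t}$ and (using $u_0\geq 0$) $u(t,x)\geq(4\pi t)^{-n/2}e^{-r^2/t}\int_{B_r} u_0$. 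Squaring, integrating over $B_{r\wedge M}\subset B_M$, and restricting to $t\in[T/2,T]$---on which $(4\pi t)^{-n}e^{-2r^2/t}\geq(4\pi T)^{-n}e^{-4r^2/T}$---produces the lower bound $\int_0^T\int_{B_M} u^2\geq \tfrac{T}{2}V_n(r\wedge M)^n(4\pi T)^{-n}e^{-4r^2/T}\big(\int_{B_r} u_0\,\mathrm dx\big)^2$. Chaining the two bounds gives the asserted inequality.

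The pointwise Gaussian comparison is routine; the only delicate point is matching the prefactor $2^{n/2+1}\pi^{n/2}T^{n/2-1}$ in (ii). It is essential to use the $L^2$-bound of the kernel rather than the cruder $L^\infty$-bound, because the algebraic identity $(4\pi T)^n(8\pi T)^{-n/2}/T=2^{n/2}\pi^{n/2}T^{n/2-1}$ together with the extra factor $2$ from restricting time to $[T/2,T]$ is what produces precisely $2^{n/2+1}\pi^{n/2}T^{n/2-1}$. Part (i) presents no substantive obstacle beyond a careful choice of cut-off.
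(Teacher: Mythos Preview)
Your proposal is correct and follows essentially the same approach as the paper: for (i) a cut-off energy estimate showing that the mass outside $B_M$ is controlled by the observation, combined with the monotonicity $T\|u(T)\|_{L^2}^2\le\int_0^T\|u(t)\|_{L^2}^2\,\mathrm dt$; for (ii) an upper bound on $\|u(T)\|_{L^2}^2$ via Young's convolution inequality and a pointwise lower bound on $u(t,x)$ via the heat kernel on $B_{r\wedge M}\times[T/2,T]$. Your execution is in fact slightly more streamlined: multiplying by $\psi^2 u$ in (i) avoids the $\Delta\varphi$ term that appears when the paper sets $v=\varphi u$ and multiplies by $v$, and in (ii) your direct use of $B_{r\wedge M}$ handles both cases $M\le r$ and $M>r$ at once, whereas the paper first proves the case $M\le r$ and then reduces $M>r$ to it.
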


\begin{proof}
(i) The proof is similar to that of (i) of Theorem \ref{prop-1}. Arbitrarily fix  $T>0$, $M>r>0$ and $u_0\in L^2(\mathbb{R}^n)$ with $supp\, u_0 \subset B_r$. Write $u$ for the solution to \eqref{heat} with $u(0,\cdot)=u_0(\cdot)$.
    Choose  a $C^2$ function $\varphi$ over $\mathbb{R}^n$  so that
    for some absolute constant $C>0$,
\begin{eqnarray}\label{3.63GGwangg}
0\leq \varphi(x)\leq 1\;\;\mbox{over}\;\;\mathbb{R}^n;\;
|D^\alpha \varphi(x)|\leq C (M-r)^{-|\alpha|}\;\;\mbox{for all}\;\;\alpha\in \mathbb{N}^n,
\;\;\mbox{with}\;\; |\alpha|\leq 2
\end{eqnarray}
   and  so that
$$
\varphi(x)=
\left\{
  \begin{array}{ll}
    0, & \hbox{$x\in B_{r},$} \\
    1, & \hbox{$x\in B_M^c.$}
  \end{array}
\right.
$$
Set $v=\varphi u$. Multiplying \eqref{equ-v} by $v$, we find that
\begin{align}\label{equ-special-0}
\frac{1}{2}\left(v^2\right)_t-v\triangle v = -2u\nabla \varphi \cdot \nabla v + \left(2|\nabla \varphi|^2-\varphi\triangle \varphi\right) u^2.
\end{align}
Integrating \eqref{equ-special-0} over $(0,T)\times\mathbb{R}^n$, we obtain that
\begin{eqnarray}\label{equ-special-1}
&&\frac{1}{2}\int_{\mathbb{R}^n}v^2(T,x)\,\mathrm dx-\frac{1}{2}\int_{\mathbb{R}^n}v^2(0,x)\,\mathrm dx+\int_0^T\int_{\mathbb{R}^n}|\nabla v(t,x)|^2\,\mathrm dx\,\mathrm dt\\
&=&\int_0^T\int_{\mathbb{R}^n}-2u(t,x)\nabla \varphi(x) \cdot \nabla v(t,x)\,\mathrm dx\,\mathrm dt+\int_0^T\int_{\mathbb{R}^n}\left(2|\nabla \varphi(x)|^2-\varphi(x)\triangle \varphi(x)\right) u^2(t,x)\,\mathrm dx\,\mathrm dt.\nonumber
\end{eqnarray}
Since the support of $u_0$ is contained in $B_r$, we have that $v(0,\cdot)=0$ over $\mathbb{R}^n$.
Then by the H\"older inequality, we deduce from \eqref{equ-special-1} that
\begin{align}\label{equ-special-2}
\int_{\mathbb{R}^n}v^2(T,x)\,\mathrm dx \leq \int_0^T\int_{\mathbb{R}^n}2\left(3|\nabla \varphi(x)|^2-\varphi(x)\triangle \varphi(x)\right) u^2(s,x)\,\mathrm dx\,\mathrm ds.
\end{align}
Note that \eqref{equ-special-2} is still true if we replace  $T$ by any $t\in (0,T)$. This implies that
\begin{align}\label{equ-special-3}
\int_0^T\int_{\mathbb{R}^n}v^2(t,x)\,\mathrm dx\,\mathrm dt \leq \int_0^T\int_0^t\int_{\mathbb{R}^n}2\left(3|\nabla \varphi(x)|^2-\varphi(x)\triangle \varphi(x)\right) u^2(t,x)\,\mathrm dx\,\mathrm d\tau \,\mathrm dt.
\end{align}
Since $v=u$ on $B_M^c$, it follows from (\ref{3.63GGwangg})
 and  \eqref{equ-special-3} that
\begin{align}\label{equ-special-4}
\int_0^T\int_{|x|\geq M}u^2(t,x)\,\mathrm dx \leq \frac{CnT}{(M-r)^2}\int_0^T\int_{r\leq |x|\leq M}u^2(t,x)\,\mathrm dx \,\mathrm dt.
\end{align}
Meanwhile, it is clear that
\begin{align}\label{equ-special-5}
\int_{\mathbb{R}^n}u^2(T,x)\,\mathrm dx \leq \frac{1}{T}\int_0^T\int_{\mathbb{R}^n}u^2(t,x)\,\mathrm dx \,\mathrm dt.
\end{align}
Combining \eqref{equ-special-4} and \eqref{equ-special-5}, we obtain that
 \begin{align*}%\label{equ-special-6}
\int_{\mathbb{R}^n}u^2(T,x)\,\mathrm dx &\leq \frac{1}{T}\int_0^T\int_{|x|\leq M}u^2(t,x)\,\mathrm dx \,\mathrm dt+ \frac{1}{T}\int_0^T\int_{|x|\geq M}u^2(t,x)\,\mathrm dx \,\mathrm dt\nonumber\\
&\leq \frac{1}{T}\int_0^T\int_{|x|\leq M}u^2(t,x)\,\mathrm dx \,\mathrm dt+ \frac{1}{T} \frac{CnT}{(M-r)^2}\int_0^T\int_{r\leq |x|\leq M}u^2(t,x)\,\mathrm dx \,\mathrm dt\nonumber\\
&\leq \left(\frac{1}{T}+\frac{Cn}{(M-r)^2}\right)\int_0^T\int_{|x|\leq M}u^2(t,x)\,\mathrm dx \,\mathrm dt.
\end{align*}
which leads to the conclusion (i) of Theorem \ref{obs-special-data}.

\medskip

(ii)  Let $T>0$ and $M>0$ be arbitrarily given.
Arbitrarily fix $u_0$ so that
 \begin{eqnarray}\label{3.70WWGGSS}
 0\leq u_0\in L^1(\mathbb{R}^n);\;\; \int_{B_r}u_0(x)\mathrm dx\geq\mu\int_{\mathbb{R}^n}u_0(x)\mathrm dx\;\; \mbox{for some}\;\;r>0\;\;\mbox{and}\;\;  \mu \in (0,1).
 \end{eqnarray}
 Write $u$ for the solution to \eqref{heat} with $u(0,\cdot)=u_0(\cdot)$.

 We first prove that when $0<M\leq r$,
\begin{align}\label{equ-10-29-1}
\int_{\mathbb{R}^n} |u(T,x)|^2\,\mathrm dx \leq \frac{2^{\frac{n}{2}+1}\pi^{\frac{n}{2}}T^{\frac{n}{2}-1}}{V_n M^n \mu^2}e^{\frac{4r^2}{T}}\int_{0}^T \int_{B_M} |u(t,x)|^2\,\mathrm dx\,\mathrm dt.
\end{align}
For this purpose,  we need the following two estimates:
\begin{align}\label{equ-10-29-2}
\int_{\frac{T}{2}}^T \int_{|x|\leq M} u^2(t,x)\,\mathrm dx\,\mathrm dt \geq 2^{-1}(4\pi)^{-n}V_n M^n\mu^2T^{-(n-1)}e^{-\frac{4r^2}{T}}\left(\int_{\mathbb{R}^n} u_0(x)\,\mathrm dx\right)^2;
\end{align}
\begin{align}\label{equ-10-29-3}
\int_{\mathbb{R}^n} u^2(T,x)\,\mathrm dx \leq  2^{-\frac{3n}{2}}(\pi T)^{-n/2}\left(\int_{\mathbb{R}^n} u_0(x)\,\mathrm dx\right)^2.
\end{align}
To show \eqref{equ-10-29-2}, we observe that
\begin{align}\label{heat-kernel}
u(t,x) = \int_{\mathbb{R}^n}(4\pi t)^{-n/2}e^{-\frac{|x-y|^2}{4t}}u_0(y)\,\mathrm dy,\quad (t,x)\in\mathbb (0,\infty) \times\mathbb R^n.
\end{align}
By (\ref{heat-kernel}) and (\ref{3.70WWGGSS}), we find that when $t>0,\ |x|\leq r$,
\begin{align}\label{equ-special-15}
u(t,x)&\geq \int_{|y|\leq r}(4\pi t)^{-n/2}e^{-\frac{|x-y|^2}{4t}}u_0(y)\,\mathrm dy\geq \int_{|y|\leq r}(4\pi t)^{-n/2}e^{-\frac{r^2}{t}}u_0(y)\,\mathrm dy\nonumber\\
&\geq \mu(4\pi t)^{-n/2}e^{-\frac{r^2}{t}}\int_{\mathbb{R}^n} u_0(x)\,\mathrm dx.
\end{align}
Since $M\leq r$, it follows from \eqref{equ-special-15} that
\begin{align*}
\int_{\frac{T}{2}}^T \int_{|x|\leq M} u^2(t,x)\,\mathrm dx\,\mathrm dt &\geq V_n M^n\mu^2\left(\int_{\mathbb{R}^n} u_0(x)\,\mathrm dx\right)^2\int_{\frac{T}{2}}^T(4\pi t)^{-n}e^{-\frac{2r^2}{t}}\,\mathrm dt\\
&\geq 2^{-1}(4\pi)^{-n}V_n M^n\mu^2T^{-(n-1)}e^{-\frac{4r^2}{T}}\left(\int_{\mathbb{R}^n} u_0(x)\,\mathrm dx\right)^2,
\end{align*}
which leads to \eqref{equ-10-29-2}.

We now  show \eqref{equ-10-29-3}. By (\ref{heat-kernel}) and
 the Young inequality,  we have that
\begin{align*}
\|u(T,x)\|_{L^2(\mathbb{R}^n)}&\leq \Big\|(4\pi T)^{-n/2}e^{-\frac{|x|^2}{4T}}\Big\|_{L^2(\mathbb{R}^n)}\int_{\mathbb{R}^n} u_0(x)\,\mathrm dx=2^{-\frac{3n}{4}}(\pi T)^{-n/4}\int_{\mathbb{R}^n} u_0(x)\,\mathrm dx,
\end{align*}
which leads to \eqref{equ-10-29-3}.

Next, by \eqref{equ-10-29-2} and \eqref{equ-10-29-3}, we see that
\begin{align*}
\int_{\mathbb{R}^n} u^2(T,x)\,\mathrm dx&\leq \frac{2^{\frac{n}{2}+1}\pi^{\frac{n}{2}}T^{\frac{n}{2}-1}}{V_n M^n \mu^2}e^{\frac{4r^2}{T}}\int_{\frac{T}{2}}^T \int_{|x|\leq M} u^2(t,x)\,\mathrm dx\,\mathrm dt\\
&\leq \frac{2^{\frac{n}{2}+1}\pi^{\frac{n}{2}}T^{\frac{n}{2}-1}}{V_n M^n \mu^2}e^{\frac{4r^2}{T}}\int_{0}^T \int_{|x|\leq M} u^2(t,x)\,\mathrm dx\,\mathrm dt,
\end{align*}
which leads to  \eqref{equ-10-29-1} for the case when $0<M\leq r$.

Finally, when $M>r$, we apply \eqref{equ-10-29-1} (with $M=r$) to obtain that
\begin{align*}
\int_{\mathbb{R}^n} u^2(T,x)\,\mathrm dx &\leq \frac{2^{\frac{n}{2}+1}\pi^{\frac{n}{2}}T^{\frac{n}{2}-1}}{V_n r^n \mu^2}e^{\frac{4r^2}{T}}\int_{0}^T \int_{|x|\leq r} u^2(t,x)\,\mathrm dx\,\mathrm dt\\
&\leq \frac{2^{\frac{n}{2}+1}\pi^{\frac{n}{2}}T^{\frac{n}{2}-1}}{V_n r^n \mu^2}e^{\frac{4r^2}{T}}\int_{0}^T \int_{|x|\leq M} u^2(t,x)\,\mathrm dx\,\mathrm dt,
\end{align*}
which leads to \eqref{equ-10-29-1}  for the case that $M>r$. So the conclusion (ii)
in Theorem \ref{obs-special-data} is true.

Hence, we end the proof of Theorem \ref{obs-special-data}
\end{proof}

\medskip

\textbf{Acknowledgment}.
This work was partially supported  by the National Natural Science Foundation of China under grants 11501424 and 11701535.


\begin{thebibliography}{00}

\bibitem{AE} J. Apraiz, L. Escauriaza.  Null-control and measurable sets.  ESAIM: COCV, 19 (2013),  239--254.

\bibitem{AEWZ} J. Apraiz, L Escauriaza, G. Wang, C. Zhang. Observability inequalities and measurable sets.
J. Eur. Math. Soc., 16 (2014), 2433--2475.

\bibitem{B} V. Barbu. Exact null internal controllability for the heat equation on unbounded convex domains. ESAIM: COCV, 20 (2014),  222--235.

\bibitem{BLR} C. Bardos, G. Lebeau, J. Rauch.  Sharp sufficient conditions for the observation, control, and stabilization of waves from the boundary. SIAM J. Control and Optim., 30 (1992), 1024--1065.

\bibitem{BD} A. Bonami, B. Demange.  A survey on uncertainty principles related to quadratic forms. Collect. Math., (2006), 1--36.

\bibitem{BJ} N. Burq, R. Joly. Exponential decay for the damped wave equation in unbounded domains. Commun. Contemp. Math., 18 (2016), 1650012.

  \bibitem{CMZ} V. R. Cabanillas, S. B. de Menezes, E.  Zuazua. Null controllability in unbounded domains for the semilinear heat equation with nonlinearities involving gradient terms. J. Optimiz. Theory  App., 110 (2001), 245--264.


\bibitem{CMV} P.  Cannarsa, P.  Martinez, J.  Vancostenoble. Null controllability of the heat equation in unbounded domains by a finite measure control region. ESAIM: COCV, 10 (2004): 381--408.



\bibitem{DM} T. Duyckaerts, L. Miller. Resolvent conditions for the control of parabolic equations. J. Funct. Anal., 263 (2012), 3641--3673.

\bibitem{EV17} M. Egidi, I. Veseli\'{c}. Sharp geometric condition for null-controllability of the heat equation on $\mathbb R^d$ and consistent estimates on the control cost. arXiv: 1711.06088v1 16 Nov. 2017.

\bibitem{E08} S. Ervedoza. Control and stabilization properties for a singular heat equation with an inverse-square potential. Commun. Partial Diff. Eq.,  33 (2008), 1996--2019.


\bibitem{EFV} L. Escauriaza, F. J. Fern\'andez, S. Vessella. Doubling properties of caloric functions.  Appl. Anal., 85 (2006),  205--223.


\bibitem{EKPV-1}  L. Escauriaza,   C. Kenig,  G. Ponce,  L. Vega. On uniqueness properties of solutions
of Schr\"{o}dinger equations. Commun. Partial Diff. Eq., 31 (2006),  1811--1823.


\bibitem{EKPV-2}  L. Escauriaza,   C. Kenig,  G. Ponce,  L. Vega. Hardy's uncertainty principle, convexity and
Schr\"{o}dinger evolutions. J. Eur. Math. Soc. 10 (2008),  883--907.


\bibitem{EKPV-3}  L. Escauriaza,   C. Kenig,  G. Ponce,  L. Vega. Uniqueness properties of solutions to Schr\"{o}dinger equations. Bull. Amer. Math. Soc.  49 (2012),    415--442.

\bibitem{EKPV-heat}  L. Escauriaza, C. E. Kenig, G. Ponce and L. Vega.
 Hardy Uncertainty Principle, Convexity and Parabolic
Evolutions, Commun. Math. Phys. 346 (2016) 667--678.

\bibitem{EMZ} L. Escauriaza,  S. Montaner, C. Zhang. Observation from measurable sets for parabolic analytic evolutions and applications. J. Math. Pures Appl,  104 (2015): 837--867.


\bibitem{FS} G. B. Folland, A. Sitaram. The uncertainty principle: a mathematical survey. J. Fourier Anal. Appl., 3 (1997), 207--238.


\bibitem{FI} A. V. Fursikov, O. Y. Imanuvilov. Controllability of evolution equations. Seoul National University, 1996.
%\bibitem{EKPV-7} L. Escauriaza, C.  Kenig, G. Ponce, and L. Vega, Hardy uncetainty principle, convexity and parabolic evolutions, arXiv:1506.05670v1 [math.AP] 18 Jun 2015.

\bibitem{HJ} V. Havin, B. J\"{o}ricke. The Uncertainty Principle in Harmonic Analysis. Springer Science \& Business Media, 2012.

    \bibitem{HSF} S. Huang,  A. Soffer. Uncertainty principle, minimal escape velocities and observability inequalities for schr\"{o} dinger equations. arXiv:1709.09485, 2017.

\bibitem{IK-1}  A. Ionescu,  C. Kenig.  $L^p$ Carleman inequalities and uniqueness of solutions of nonlinear Schr\"{o}dinger equations. Acta Math. 193 (2004),  193--239.

\bibitem{IK-2} A. Ionescu,  C. Kenig. Uniqueness properties of solutions of Schr\"{o}dinger equations. J. Funct. Anal., 232 (2006),  90--136.

\bibitem{P.Jaming} P. Jaming. Nazarov's uncertainty principles in higher dimension. J.  Approx. Theory, 149 (2007), 30--41.

\bibitem{JL} R. Joly, C. Laurent. Stabilization for the semilinear wave equation with geometric control condition. Analysis \& PDE,  6 (2013), 1089--1119.



\bibitem{Ka} V. \`{E}. Kacnel\'{s}on. Equivalent norms in spaces of entire functions. Mathematics of the USSR-Sbornik, 21 (1973), 33.


\bibitem{K} O. Kovrijkine. Some results related to the Logvinenko-Sereda theorem. Proc. Amer. Math. Soc., 129 (2001), 3037--3047.





%
%\bibitem{Kom} Komornik V, Loreti P. Fourier series in control theory[M]. Springer Science $\&$ Business Media, 2005.

%
\bibitem{Lau} C. Laurent. Internal control of the Schr\"odinger equation. Math.  Control Relat. F.,  4 (2014), 161--186.

\bibitem{LR} G. Lebeau, L. Robbiano.  Contr\^{o}le exact de l\'{e}quation de la chaleur. Commun. Partial Diff. Eq., 20 (1995), 335--356.

    \bibitem{LZUA} G. Lebeau and E. Zuazua, Null-controllability of a system of linear thermoelasticity, Arch.
Rational Mech. Anal., 141 (1998), pp. 297-329.


\bibitem{RLT} J. Le Rousseau, G. Lebeau, P. Terpolilli, E. Tr\'elat. Geometric control condition for the wave equation with a time-dependent observation domain. Analysis \& PDE,  10 (2017), 983--1015.

\bibitem{RM} J. Le Rousseau, I. Moyano. Null-controllability of the Kolmogorov equation in the whole phase space. J. Differ. Equ., 260 (2016), 3193--3233.



\bibitem{LS} V. N. Logvinenko, J. F. Sereda. Equivalent norms in spaces of entire functions of exponential type. Teor. Funkci\v{i}Funkcional. Anal. i Prilo\v{z}en. Vyp., 20 (1974), 102--111.



\bibitem {LZZ} A. Lopez, X. Zhang, and E. Zuazua. Null controllability of the heat equation as singular
limit of the exact controllability of dissipative wave equations, J. Math. Pures Appl., 79
(2000),  741--808.


\bibitem{LVQ1} Q. L\"{u}. A lower bound on local energy of partial sum of eigenfunctions for Laplace-Beltrami
operators. ESAIM COCV, 19 (2013) 255--273.


\bibitem{LY} Q. L\"{u}, Z. Yin. Unique continuation for stochastic heat equations. ESAIM: COCV,  21 (2015), 378--398.

%\bibitem{M03} S. B. Menezes. Approximate controllability for the semilinear heat equation in $\mathbb R^N$ involving gradient terms. Computational \& Applied Mathematics, 22 (2003), 123--148.

\bibitem{MZ} S. Micu, E. Zuazua. On the lack of null-controllability of the heat equation on the half-line. Trans. Amer. Math. Soc., 353 (2001), 1635--1659.

\bibitem{MZb} S. Micu, E. Zuazua. On the lack of null-controllability of the heat equation on the half space. Port. Math., 58 (2001), 1--24.

\bibitem{M05a} L. Miller. On the null-controllability of the heat equation in unbounded domains. B. Sci.  math., 129 (2005), 175--185.

\bibitem{M05b} L. Miller. Unique continuation estimates for the Laplacian and the heat equation on non-compact manifolds. Math.  Res. Lett., 12 (2005), 37--47.

\bibitem{M09} L. Miller. Unique continuation estimates for sums of semiclassical eigenfunctions and null-controllability from cones, Preprint, 2009.

\bibitem{N} F. L. Nazarov.  Local estimates for exponential polynomials and their applications to inequalities of the uncertainty principle type, Algebra i Mathematicheskiy Analiz 5 (1993), 3--66.

\bibitem {BPP1}  B.P. Paneyah. Some estimates of functions of exponential type and apriori
estimates for gerneral dikerential operators, Uspekhi mat.nauk 21 N3
(1996) 75-114 (Russian).


\bibitem {BPP2} B.P. Paneyah. On some theorems of Paley-Wiener type. Doklady AN
SSSR 138 N1 (1961) 47-50 (Russian).

\bibitem{PW10} K. D. Phung, G. Wang. Quantitative unique continuation for the semilinear heat equation in a convex domain.  J. Funct. Anal.,  259 (2010), 1230--1247.

\bibitem{PW} K. D. Phung, G. Wang. An observability estimate for parabolic equations from a measurable set in time and its applications. J. Eur. Math. Soc,  15 (2013), 681--703.

\bibitem{PWX} K. D. Phung, G. Wang, Y. Xu. Impulse output rapid stabilization for heat equations. J. Diff. Equ., 263 (2017), 5012--5041.

\bibitem{PWZ} K. D. Phung, L. Wang, C. Zhang. Bang-bang property for time optimal control of semilinear heat equation. Ann. Inst. H. Poincar'e Anal. Non Lin¡äeaire, 31 (2014), 477--499.

\bibitem{RT} J. Rauch, M. Taylor. Exponential decay of solutions to hyperbolic equations in bounded domains. Indiana Univ. Math. J. 24 (1974), 79--86.

\bibitem{SIMON1}   M. Reed and B. Simon. Methods of Modern Mathematical Physics I: Functional Analysis, Academic Press, 1980.


\bibitem{Re} A. Reznikov. Sharp constants in the Paneyah¨CLogvinenko¨CSereda theorem. Comptes Rendus Mathematique, 348 (2010), 141--144.










\bibitem{S} B. Simon. Schr\"{o}dinger semigroups. Bull. Amer. Math. Soc., 7 (1982), 447--526.

%\bibitem{VZ} J.  Vancostenoble, E. Zuazua. Null controllability for the heat equation with singular inverse-square potentials. Journal of Functional Analysis,  254 (2008): 1864--1902.
\bibitem {S.V.} S. Vessella. A continuous dependence result in the analytic continuation problem. Forum
Math. 11, 6 (1999) 695-703.

\bibitem{WWZ} G. Wang, M. Wang, Y. Zhang. Observability and unique continuation inequalities for the Schr\"{o}dinger equation. To appear in J. Eur. Math. Soc.,
    arXiv preprint arXiv:1606.05861, 2016.

\bibitem{wz} G. Wang, C. Zhang.
Observability inequalities from measurable sets for some abstract evolution equations.
SIAM J. Control  Optim., 55 (2017), 1862--1886.



%\bibitem{YZ} D. Yang, J. Zhong. Observability inequality of backward stochastic heat equations for measurable sets and its applications. SIAM Journal on Control and Optimization,  54 (2016), 1157--1175.

\bibitem{Z17} C. Zhang. Quantitative unique continuation for the heat equation with Coulomb potentials. arXiv preprint arXiv:1707.07744, 2017.

\bibitem{Z16} Y. Zhang. Unique continuation estimates for the Kolmogorov equation in the whole space. Comptes Rendus Mathematique, 354 (2016), 389--393.

\bibitem{Zua91} E. Zuazua. Exponential decay for the semilinear wave equation with localized damping in unbounded domains. J. Math. Pures Appl, 70 (1991), 513--529.

\end{thebibliography}
\end{document}